\definecolor{gr}{rgb}   {0.,   0.69,   0.23 }
\definecolor{bl}{rgb}   {0.,   0.5,   1. }
\definecolor{mg}{rgb}   {0.85,  0.,    0.85}
\definecolor{yl}{rgb}   {0.8,  0.7,   0.}
\newtheorem{theorem}{Theorem} [section]
\newtheorem{lemma}[theorem]{Lemma}
\newtheorem{proposition}[theorem]{Proposition}
\newtheorem{remark}[theorem]{Remark}
\newtheorem{corollary}[theorem]{Corollary}
\newtheorem*{ackno}{Acknowledgment}
\DeclareMathOperator*{\supp}{supp}
\newcommand{\noi}{\noindent}
\newcommand{\Z}{\mathbb{Z}}
\newcommand{\R}{\mathbb{R}}
\newcommand{\T}{\mathbb{T}}
\let\Re=\undefined\DeclareMathOperator*{\Re}{Re}
\let\Im=\undefined\DeclareMathOperator*{\Im}{Im}
\newcommand{\PP}{\mathbb{P}}
\newcommand{\E}{\mathbb{E}}
\renewcommand{\L}{\mathcal{L}}
\newcommand{\NB}{\mathbb{N}}
\newcommand{\FL}{\mathcal{F}L} 
\newcommand{\m}{\mathfrak{m}}
\newcommand{\al}{\alpha}
\newcommand{\dl}{\delta}
\newcommand{\eps}{\varepsilon}
\newcommand{\kk}{\kappa}
\newcommand{\g}{\gamma}
\newcommand{\s}{\sigma}
\newcommand{\ft}{\widehat}
\newcommand{\wt}{\widetilde}
\newcommand{\cj}{\overline}
\newcommand{\dx}{\partial_x}
\newcommand{\dt}{\partial_t}
\newcommand{\dd}{\partial}
\newcommand{\ta}{\theta}
\newcommand{\tr}{\text{tr}}
\newcommand{\Dr}{\Theta}
\newcommand{\A}{\mathcal{A}}
\DeclarePairedDelimiter\ceil{\lceil}{\rceil}
\DeclarePairedDelimiter\floor{\lfloor}{\rfloor}
\renewcommand{\l}{\ell}
\renewcommand{\o}{\omega}
\renewcommand{\O}{\Omega}
\newcommand{\les}{\lesssim}
\newcommand{\vk}{\varkappa}
\newcommand{\jb}[1]
{\langle #1 \rangle}
\newcommand{\ind}{\mathbbm{1}}
\DeclareMathOperator{\Id}{Id}
\DeclareMathOperator{\sgn}{sgn}
\numberwithin{equation}{section}
\numberwithin{theorem}{section}
\begin{document}
\baselineskip = 12.7pt

\title[Invariant measures for KdV and mKdV]
{A continuum of invariant measures for the periodic KdV and mKdV equations
}
\author[A.~Chapouto, J.~Forlano]
{Andreia Chapouto and Justin Forlano }

\address{
Andreia Chapouto \\
Laboratoire de math\'ematiques
de Versailles\\
 UVSQ, Universite Paris-Saclay\\
CNRS, 45 avenue des \'Etats-Unis, \\
78035 Versailles\\
Cedex, France\\ \and\\
School of Mathematics\\
The University of Edinburgh\\
and The Maxwell Institute for the Mathematical Sciences\\
James Clerk Maxwell Building\\
The King's Buildings\\
Peter Guthrie Tait Road\\
Edinburgh\\
EH9 3FD\\
 United Kingdom\\
 \and\\
Department of Mathematics\\
University of California\\
Los Angeles\\
CA 90095\\
USA}

\email{andreia.chapouto@uvsq.fr}

\address{
Justin Forlano\\
School of Mathematics\\
9 Rainforest Walk\\
Monash University VIC 3800, Australia\\ \and \\
School of Mathematics\\
The University of Edinburgh\\
and The Maxwell Institute for the Mathematical Sciences\\
James Clerk Maxwell Building\\
The King's Buildings\\
Peter Guthrie Tait Road\\
Edinburgh\\
EH9 3FD\\
 United Kingdom\\
 \and\\
Department of Mathematics\\
University of California\\
Los Angeles\\
CA 90095\\
USA}

\email{justin.forlano@monash.edu}

\subjclass[2020]{35Q53, 35Q55, 60H30}

\keywords{Korteweg-de Vries equation, modified Korteweg-de Vries equation, nonlinear Schr\"{o}dinger equation, invariant measures, Gibbs measures, complete integrability.}

\begin{abstract}
	We consider the real-valued defocusing modified Korteweg-de Vries equation (mKdV) on the circle.
Based on the complete integrability of mKdV, Killip-Vi\c{s}an-Zhang (2018) discovered a conserved quantity
	which they used to prove low regularity a priori bounds for solutions.
It has been an open question if this conserved quantity can be used to define invariant measures supported at fractional Sobolev regularities.
	Motivated by this question, we construct probability measures supported on $H^s(\T)$ for $0<s<1/2$ invariant under the mKdV flow.
	We then use the Miura transform to obtain invariant measures for the Korteweg-de Vries equation, whose supports are rougher than the white noise measure.
	We also obtain analogous results for the defocusing cubic nonlinear Schr\"{o}dinger equation. These invariant measures cover the lowest possible regularities for which the flows of these equations are well-posed.

\end{abstract}

\maketitle

\vspace*{-6mm}

\section{Introduction}

We consider the real-valued modified Korteweg de-Vries equation (mKdV):
\begin{equation}\label{mkdv}
	\dt q=-\dx^{3} q \pm 6   q^2  \dx q,
\end{equation}
posed on the circle $\T=\R /\Z$. This equation \eqref{mkdv} is also known as the Miura mKdV equation after \cite{Miura}.
We say that \eqref{mkdv} is defocusing with the positive sign in front of the nonlinearity, and focusing with the negative sign.
The mKdV equation \eqref{mkdv} has garnered much attention from the mathematical community due to its rich structure, as it is both Hamiltonian and completely integrable.
The Hamiltonian for mKdV is given by
\begin{align}
	H^{\text{mKdV}}(q) = \frac{1}{2}\int_{0}^{1} (\dx q(x))^2 \pm  q(x)^4 \, dx, \label{Hmkdv}
\end{align}
which generates the dynamics of \eqref{mkdv} through the Poisson bracket
\begin{align}
	\{ F,G\} = \int_{0}^{1} \frac{\dl F}{\dl q} \dx \frac{\dl G}{\dl q}dx.
	\label{Poisson}
\end{align}
A natural question, motivated by the finite-dimensional Hamiltonian setting, is the existence of probability measures which are invariant under the flow of an infinite-dimensional Hamiltonian system. This has been a very active field of research in the past thirty years, initiated by Lebowitz-Rose-Speer~\cite{LRS} and Bourgain~\cite{BO94, BO96}; see also \cite{Friedlander, Zhidkov91, McKean95}. In this paper, we aim to construct new invariant measures for mKdV, inspired by the recent work of Tzvetkov \cite{T23} and the conserved quantities discovered by Harrop-Griffiths, Killip, Vi\c{s}an, and Zhang~\cite{KVZ, HGKV}. We also extend this construction to the Korteweg-de Vries equation (KdV) via the Miura map.

A natural candidate for an invariant measure is the Gibbs measure
\begin{align}
	``d\rho_1 = Z^{-1}\exp( - H^{\text{mKdV}}(q)-M(q))dq". \label{gibbs}
\end{align}
Here, $Z$ is a normalisation constant and $M$ is the mass
\begin{align*}
	M(q) =\frac{1}{2}\int_{0}^{1} q(x)^2 \, dx,
\end{align*} which is also conserved under \eqref{mkdv}. We have included it in \eqref{gibbs} to better align with our following discussions.
The representation in \eqref{gibbs} is purely formal, but one can rigorously define the measure $\rho_1$ as a weighted measure of the form
\begin{align}
	d\rho_{1} =Z^{-1} \exp\big( \mp  \tfrac{1}{2} \textstyle\int_{0}^1 q(x)^4 dx \big)d\wt{\mu}_{1}, \label{Gibbs2}
\end{align}
where $\wt{\mu}_{1}$ is the Gaussian measure on $L^2(\T)$ with covariance operator $(-\dd_{x}^{2}+1)$. In the defocusing case, there is a ``good" sign: the density in \eqref{Gibbs2} is integrable with respect to $\wt{\mu}_{1}$ since $\int_{0}^{1} q^4 dx$ is non-negative and almost surely finite in the support of $\wt{\mu}_1$ (by Sobolev embedding). In the focusing case, there is a ``bad" sign which motivated Lebowitz-Rose-Speer~\cite{LRS} to insert a cut-off depending on the conserved mass:
\begin{align}
	d\rho_{1,R} =Z^{-1} \ind_{\{ \|q\|_{L^2}^{2}\leq R\}} \exp\big(   \tfrac{1}{2} \textstyle\int_{0}^1 q(x)^4 dx \big)d\wt{\mu}_{1}. \label{Gibbs2R}
\end{align}
This allowed them to rigorously construct the measures $\rho_{1,R}$ for any $R>0$.
The invariance of the Gibbs measures \eqref{Gibbs2} and \eqref{Gibbs2R} under the flow of \eqref{mkdv} was proven by Bourgain \cite{BO94}.

In order to discuss the invariance of measures such as $\rho_1$, we first need dynamics, at least defined almost surely on their support. Much is known on the well-posedness of mKdV \eqref{mkdv}; see \cite{Kato1, MTsut, BO2, BO94, CKSTT1, TT, NTT, MPV2, KapTop, Molinet, Schippa, Forlano} and references therein.
In particular, exploiting complete integrability, Kappeler-Topalov~\cite{KapTop} proved global well-posedness in $L^2(\T)$. An alternative proof was given by the second author in~\cite{Forlano}, using the method of commuting flows in~\cite{KV, HGKV}.
These results are sharp as Molinet~\cite{Molinet} proved ill-posedness below $L^2(\T)$, as well as global existence without uniqueness in $L^2(\T)$; see also \cite{Schippa}.

Returning to the problem of invariant measures, the complete integrability of mKdV~\eqref{mkdv}
implies the existence of an infinite number of conserved quantities $\mathcal{E}_{k}(q)$, for $k\in\NB$, which control the $H^k (\T)$-norm of solutions. See for example \cite{Lax} in the context of the also completely integrable KdV
\begin{align}
	\dt q = -\dx^3 q + 6q \dx q. \label{KdV}
\end{align}
Consequently, the probability measures
\begin{align}
	``d\rho_{k} =Z^{-1}_{k} \exp(- \mathcal{E}_{k}(q) -M(q)) dq" \label{rhok}
\end{align}
are also natural candidate invariant measures for \eqref{mkdv}, which would be supported on ever more regular spaces as $k\in \NB$ increases.
This program of constructing an infinite family of invariant measures associated to the explicit higher-order conservation laws was completed for other completely integrable equations, namely for KdV and the cubic nonlinear Schr\"{o}dinger equation (NLS) by Zhidkov \cite{Zhidkov, Zhid}, for the Benjamin-Ono equation (BO) by Tzvetkov, Visciglia, and Deng in a series of papers \cite{T10, TV1, TV2, DTV, Deng}, and also recently for the intermediate long wave equation \cite{LOZ, CLOZ}. Such a construction is expected to also hold for mKdV.

One of the main analytical reasons that invariant measures and associated dynamics are of interest is that one can use the invariance of the measure as a tool to upgrade local well-posedness on the support of the measure to global well-posedness. This argument is known as Bourgain's invariant measure argument after \cite{BO94, BO96}, and is useful and interesting when no deterministic global well-posedness is known or exists. In contrast, the aforementioned program of constructing invariant measures associated to higher conservation laws does not benefit from the globalization aspect of Bourgain's invariant measure argument as typically deterministic global well-posedness on the support of the measures is already well-known! Nonetheless, the presence of an invariant measure and associated dynamics allows us to access viewpoints from the theory of dynamical systems. Namely, the solutions are recurrent in the sense of the Poincar\'e recurrence theorem; see \cite[p. 106]{ZhidPoin} and Corollary~\ref{COR:Poincare} below.

The question that motivates our paper is if there exist intermediate invariant measures \textit{between} $\rho_k$ and $\rho_{k+1}$ in \eqref{rhok}. This was very recently shown to be the case for BO by Tzvetkov \cite{Tz}. Namely, he constructed non-degenerate invariant measures for BO supported on $H^{\s}(\T)$ for any $\s>-\frac12$,
thus covering every regularity regime in which BO is globally well-posed. His proof uses the Birkhoff coordinates for BO in \cite{GKT}.
Here, we aim to take the first step towards answering this question for mKdV by constructing, for each $0<\s<\frac12$, a (canonical) invariant measure supported on $H^{\s}(\T)$; namely, a continuum of invariant measures. Our approach does not use the existence of Birkhoff coordinates.
Instead, we exploit the completely integrable nature of the equation from the perspective of the commuting flows method in \cite{KV, HGKV} and the ideas in \cite{KVZ}.
 In particular, our argument combines tools from PDE theory, completely integrable systems, and stochastic analysis.

We now motivate our construction of the invariant measures.
In \cite{KVZ}, Killip-Vi\c{s}an-Zhang observed that for mKdV (and also for NLS, with appropriate modifications), the quantity
\begin{align}
	\al(\kk,q) =- \sum_{m=1}^{\infty} \frac{(\mp 1)^{m-1}}{m} \tr\{  [ (\kk-\dd)^{-1}q(\kk+\dd)^{-1}q]^{m}\},  \quad \kk \geq 100(1+\|q\|_{L^2(\T)}^{2}) \label{alpha}
\end{align}
is conserved, at least for regular enough solutions. Here, $\al(\kk,q)$ is a series expansion of the logarithm of the perturbation determinant; see \cite{KVZ}.
They then exploited this conservation to prove a-priori $H^s$ bounds for every $-\frac12<s<0$.\footnote{Strictly speaking, the authors did not use \eqref{alpha} but a very similar version of it, which is necessary for potentials below $L^2(\T)$.}
Unfortunately, we believe that $\al(\kk,q)$ and conserved quantities based on it are unsuitable choices to define candidate invariant measures of the form \eqref{gibbs}.
The main obstructions are the sign indefiniteness of $\al(\kk,q)$ and the restriction on $\kk$ in \eqref{alpha}. Both suggest that a mass cut-off is necessary when defining the measure. However, this would force $\kk$ to depend upon the size of the mass cut-off, and so would $\al(\kk,q)$, albeit in a highly nonlinear and non-explicit way.
We believe such a restriction to be artificial.

Fortunately, in \cite{HGKV}, a more suitable candidate was discovered. They showed that $\al(\kk,q)$ can be written in terms of a macroscopic conservation law $A(\kk,q)$, which extends by analyticity to all $\kk\geq 1$. Following \cite{KMV, FKV},
we thus consider the quantity $A(\kk,q)$, whose precise definition we defer to \eqref{defA}.
Note that $A(\kk,q)$ is well-defined and conserved under the defocusing mKdV flow for any $\kk\geq 1$ and $q\in C^{\infty}(\T)$; see \cite{FKV}. In the following, we will thus restrict to the defocusing mKdV equation; see Remark~\ref{RMK:focusing} for a discussion on the focusing case.

To recover an $H^s(\T)$-like quantity from $A(\kk,q)$, we follow an idea in \cite{KVZ}. Given $\kk\geq1$ and $\frac{1}{2}<s<1$, we consider \begin{align}\label{Acal:intro}
	\mathcal{A}(\kk,q) = A(\kk,q)-\tfrac{1}{2}A(\tfrac{\kk}{2},q),
\end{align}
and the following candidate for a conserved quantity
\begin{align}\label{Wkk}
	\mathcal{E}_{s}(q) : = \int_1^\infty \kk^{2s} \mathcal{A}(\kk,q) \, d\kk.
\end{align}
It follows from the conservation of $A(\kk, q)$ under the mKdV flow that $\A(\kk, q)$ is also conserved, and thus we expect the same for $\mathcal{E}_{s}(q)$.
We isolate the quadratic-in-$q$ contributions in $\mathcal{E}_s(q)$ by writing
\begin{align}
	\mathcal{E}_{s}(q) = \tfrac{1}{2} \| \mathfrak{m}_{s}(-i\dd) q\|_{L^{2}(\T)}^{2} + \int_1^\infty \kk^{2s} \, \mathcal{V}(\kk,q) \, d\kk, \label{EsV}
\end{align}
where $\mathfrak{m}_s(-i\partial)$ has Fourier multiplier
\begin{align}
	\mathfrak{m}_{s}(\xi) &: =   \bigg( \int_1^\infty  \kk^{2s-1} w(\xi,\kk) \, d\kk\bigg)^{\frac{1}{2}}, \qquad \text{where } \quad
	w(\xi,\kk) :=  \frac{3\kk^2 \xi^2}{(\kk^2 + \xi^2) (4\kk^2 + \xi^2)}, \label{wmult}
\end{align}
and $\mathcal{V}(\kk,q)$ contains the contributions in $\mathcal{A}(\kk, q)$ which are at least quartic in $q$; see \eqref{EEdefn}.

Since $w(0,\kk) =0$, the weight in \eqref{wmult} vanishes at the zero-th frequency. To avoid a restriction to mean-zero functions, we consider Gaussian measures $\mu_{s}$ defined by
\begin{align}
	d\mu_{s}:= Z_{s}^{-1}\exp\big( -\tfrac{1}{2}  \| \mathfrak{m}_{s}(-i\dd) q\|_{L^{2}(\T)}^{2} -\tfrac{1}{2}\|q\|_{L^2(\T)}^{2}\big) dq . \label{gauss0}
\end{align}
Rigorously, we view $\mu_{s}$ as the pushforward measure under the map
\begin{align}
	\o\in \O \mapsto q(x;\o) = \sum_{\xi\in 2\pi\Z} \frac{g_{\xi}(\o)}{ \jb{\m_{s}(\xi)}}e^{ i\xi x}, \label{randomfourier}
\end{align}
where $\jb{\xi}:=(1+\xi^2)^{\frac{1}{2}}$ and $(g_{\xi}(\o))_{\xi\in 2\pi\Z}$ are standard complex-valued normal random variables living on some ambient probability space $(\O, \mathcal{F}, \mathbb{P})$, with the restrictions that $g_{-\xi}=\cj{g_{\xi}}$ and $g_0$ is real.
Since $\jb{\mathfrak{m}_s(\xi)} \sim \jb{\xi}^s$ (see Lemma~\ref{LEM:ms}), it follows that $$q(\cdot; \o) \in H^\s(\T) \setminus H^{s-\frac12}(\T)$$ almost surely for any $\s < s- \frac12$.

We can finally define our candidate measures $\rho_{s,R}$ given by
\begin{align}
	d \rho_{s, R} := Z_{s,  R}^{-1}\, \ind_{\{\|q\|_{L^2}^2 \leq R\}} \exp\bigg(- \int_1^\infty\kk^{2s} \mathcal{V}(\kk, q)  \, d\kk \bigg) d\mu_{s}, \label{GibbsEmeas}
\end{align}
for $\frac12<s<1$ and $R>0$.
Our first main result is a rigorous construction of \eqref{GibbsEmeas} and corresponding invariance under the flow of the defocusing mKdV equation \eqref{mkdv}.

\begin{theorem}[Invariant measures for defocusing mKdV]\label{THM:mkdv}
	Let $\tfrac{1}{2}<s<1$ and $R>0$. Then, $\rho_{s,R}$ in \eqref{GibbsEmeas} defines a probability measure on $L^2(\T)$, endowed with the Borel sigma algebra, which satisfies: \\
	\textup{(i)} $\rho_{s,R}$ is absolutely continuous with respect to the Gaussian measure $\mu_{s}$ in \eqref{gauss0}.\\
	\textup{(ii)} $\supp \rho_{s,R} \subseteq H^{\s}(\T)\setminus H^{s-\frac{1}{2}}(\T)$, for every $\s<s-\tfrac{1}{2}$, and moreover,
	\begin{align}
		\bigcup_{R>0} \supp \rho_{s, R} = \supp \mu_{s}. \label{supppsr}
	\end{align}
	\textup{(iii)}  The measure $\rho_{s,R}$ is invariant under the defocusing mKdV flow \eqref{mkdv}. More precisely, for $\Phi^{\textup{mKdV}}(t):q^0\mapsto q(t)$ the data-to-solution map for defocusing mKdV, we have
	\begin{align}
		\int_{L^2(\T)} f( \Phi^{\textup{mKdV}}(t)(q)) d\rho_{s,R}(q) =\int_{L^2(\T)} f( q) d\rho_{s,R}(q)
		\label{mkdvinv}
	\end{align}
	for all $t\in \R$ and for all $f\in L^1(L^2(\T); d\mu_{s})$.
\end{theorem}

In view of Theorem~\ref{THM:mkdv} (ii), our measures are supported on rougher spaces as compared to the corresponding Gibbs measure \eqref{Gibbs2}. In this sense, Theorem~\ref{THM:mkdv} constitutes an extension to lower regularities of the invariance of the Gibbs measure by Bourgain~\cite{BO94}. Another consequence of Theorem~\ref{THM:mkdv} is the quasi-invariance of Gaussian measures under the flow of defocusing mKdV \eqref{mkdv}. Namely, the Gaussian measure
\begin{align}
	d\wt{\mu}_{s} = Z_{s}^{-1} \exp( -\tfrac{C_s}{2}\|q\|_{H^{s}(\T)}^{2})dq, \label{mutilde}
\end{align}
where $C_s>0$ is the constant defined in \eqref{Cs}, is mutually absolutely continuous with respect to the pushforward measure $\Phi^{\text{mKdV}}(t)_{\#} \wt{\mu}_s$. This follows from the equivalence of the measures $\mu_s$ and $\wt{\mu}_{s}$, which we prove in Section~\ref{SEC:equiv}, the mutual absolute continuity between $\rho_{s,R}$ and the restricted probability measures
\begin{align}
	Z^{-1}_{s,R} \ind_{\{ \|q\|_{L^{2}}^2 \leq R\}} d\mu_{s},
	\label{gauss-rest}
\end{align}
and the monotone convergence theorem (to take $R\to \infty$). This (only) extends the qualitative statement of quasi-invariance in \cite{PTV2} to lower-regularities for the mKdV equation.\footnote{Namely, we do not say anything about the transported density.} We refer to \cite{Tz, OTz2, DT2020, PTV2} and the references therein for further discussion on quasi-invariance of Gaussian measures for dispersive PDEs.

The presence of a mass cut-off in \eqref{GibbsEmeas} may be seen as somewhat unnatural given the defocusing nature of the equation. We recall that the defocusing Gibbs measure in \eqref{Gibbs2} can be rigorously constructed without a mass cut-off because of the sign-definiteness of the Hamiltonian \eqref{Hmkdv}.
In our setting, this translates to establishing that the integral involving $\mathcal{V}(\kk, q)$ on the right-hand side of \eqref{EsV} is non-negative, at least on the support of the measure $\mu_s$. Due to the involved nature of $A(\kk,q)$ and hence of $\mathcal{V}(\kk,q)$, it is not clear if this claim is true. Nonetheless, the result of Lemma~\ref{LEM:EMconv} verifies that this quantity is finite almost-surely. Moreover, we do succeed in constructing the measures in \eqref{GibbsEmeas} for every $R>0$, and thus also recovering the support of the Gaussian measure $\mu_s$; see Theorem~\ref{THM:mkdv}~(ii).

The result of Theorem~\ref{THM:mkdv} immediately combines with the Poincar\'e recurrence theorem \cite[p. 106]{ZhidPoin} to yield the following recurrence phenomena.

\begin{corollary}\label{COR:Poincare}
Let $\frac 12<s<1$, $\s<s-\frac 12$, and $R>0$. Then, there is a measurable set $\Sigma\subseteq H^{\s}(\T)$ such that $\rho_{s,R}(\Sigma)=1$ and for $u_0\in \Sigma$ and $u(t)$ the solution to \eqref{mkdv}, there exists $t_{n}\to \infty$ as $n\to \infty$ for which
\begin{align*}
\lim_{n\to \infty} \| u(t_n)-u_0\|_{H^{\s}(\T)}=0.
\end{align*}
\end{corollary}

Our analysis is also quite general and we expect similar results to hold true for many other completely integrable Hamiltonian PDEs, which are at least amenable to the analysis used in the method of commuting flows.
In particular, we show that an analogue of Theorem~\ref{THM:mkdv} holds for the defocusing cubic NLS equation on $\T$:
\begin{align}
i\dt q = -\dx^{2} q + 2|q|^2 q. \label{NLS}
\end{align}
See Section~\ref{SEC:NLS} for a discussion on the measures and the needed modifications and Theorem~\ref{THM:NLS} for a statement of the results. We do not provide full details for \eqref{NLS} to avoid repetition with the case of \eqref{mkdv}, and instead emphasize the connection of \eqref{mkdv} with the KdV equation which we discuss below. We also point out that our results for \eqref{NLS} (Theorem~\ref{THM:NLS}) and \eqref{KdV} (Theorem \ref{THM:kdv}) imply corresponding recurrence statements for solutions as in Corollary~\ref{COR:Poincare}, which further emphazises the generality of our approach.

\medskip

Our second main contribution is the construction of invariant measures for KdV supported at low regularity. To this end, we focus on initial data with mean zero and consider the base Gaussian measure $\mu_{s}^{0}$ obtained by removing the mass term in \eqref{gauss0}. Note that the results in Theorem~\ref{THM:mkdv} extend to the corresponding weighted measures $\rho_{s,R}^{0}$ with base Gaussian $\mu_{s}^{0}$, where the mKdV dynamics are restricted to mean zero data.
Here, we have access to the (corrected) Miura transform:
\begin{align*}
	B:L^2_{0}(\T)\mapsto H^{-1}_{0}(\T), \quad B(q)=\dx q + q^2-\|q\|_{L^2(\T)}^2,
\end{align*}
where $L^2_{0}(\T)$ and $H^{-1}_{0}(\T)$ are subspaces of $L^2(\T)$ and $H^{-1}(\T)$ restricted to mean-zero distributions, respectively. The Miura transform maps smooth solutions of defocusing mKdV to smooth solutions of KdV and
Kappeler-Topalov \cite{KTRiccati} showed that $B$ is a real-analytic isomorphism.
We define the probability measure
\begin{align*}
	\nu_{s,R}^0 =B_{\#}\rho_{s,R}^{0}, \quad \nu_{s,R}^0(A) = \rho_{s,R}^{0}(B^{-1}(A))
\end{align*}
for Borel measurable sets $A\subseteq H^{-1}_{0}(\T)$, and the map
\begin{align}
	\Phi^{\text{KdV}}_{0}(t)(w^0): = B\circ \Phi^{\text{mKdV}}(t)\circ B^{-1}(w^0) \label{intro:Kdvmeanzero}
\end{align}
for $w^0\in H^{-1}_{0}(\T)$.
The mean-zero assumption allows us to show that the KdV dynamics from \cite{KapTopKdv, KV} agree with those in \eqref{intro:Kdvmeanzero} and preserve the measures $\nu_{s,R}^0$.
A similar discussion applies to initial data with prescribed non-zero mean.

\begin{theorem}\label{THM:kdv}
Let $\tfrac{1}{2}<s<1$ and $R>0$. \\

	\noi
	{\rm(i)}
	The probability measures $\nu_{s,R}^{0}$, which are supported on $H_{0}^{\s}(\T)\setminus H_{0}^{s-\frac{3}{2}}(\T)$, for every $\s<s-\tfrac{3}{2}$, are invariant under the $\Phi_0^{\textup{KdV}}$-flow: namely,
	\begin{align*}
		\int_{H^{-1}_{0}(\T)} f( \Phi_0^{\textup{KdV}}(w))d\nu_{s,R}^{0}(w) = \int_{H^{-1}_{0}(\T)} f( w)d\nu_{s,R}^{0}(w) ,
	\end{align*}
	for all $t\in \R$ and $f\in L^1(H^{-1}_{0}(\T);d\nu_{s,R}^0)$.

	\noi{\rm(ii)} For $\al\in\R$, define the KdV data-to-solution map on $$H^{-1}_\al(\T) = \{q \in H^{-1}(\T): \, \textstyle{\int_{\T}} \,q(x) dx = \al\},$$ by
	$\Phi^{\textup{KdV}}_{\al}(t): = \tau_{-\al}\circ \Phi^{\textup{KdV}}_{0}(t)\circ \tau_{\al},$
	where $\tau_\al(q) = q-\al$. Then, the probability measure $\nu^{\al}_{s,R}:=(\tau_{-\al})_{\#}\nu_{s,R}^{0}=(\tau_{-\al}\circ B)_{\#}\rho_{s,R}^0$ is invariant under the flow $\Phi^{\textup{KdV}}_\al(t)$ for all $t \in\R$.

	\noi{\rm(iii)} For $\al\in\R$, the map $\Phi^{\textup{KdV}}_\al(t)$ agrees with those in \cite{KapTopKdv, KV} restricted to $H^{-1}_{\al}(\T)$.
\end{theorem}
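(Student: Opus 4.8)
\emph{Proof proposal.} The plan is to obtain parts (i) and (ii) as formal consequences of Theorem~\ref{THM:mkdv} via the change-of-variables formula for pushforward measures, and to reserve the analytic work for part (iii).

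For part (i), recall that $B:L^2_0(\T)\to H^{-1}_0(\T)$ is a real-analytic isomorphism, hence a bi-measurable homeomorphism, by \cite{KTRiccati}. Thus $\Phi_0^{\textup{KdV}}(t)=B\circ\Phi^{\textup{mKdV}}(t)\circ B^{-1}$ is well defined and Borel measurable $\nu^0$-a.e.: the map $B^{-1}$ carries $\nu^0$-a.e.\ point into $\supp\rho^0\subset L^2_0(\T)$, where $\Phi^{\textup{mKdV}}(t)$ is defined by global well-posedness of mKdV in $L^2(\T)$ \cite{KapTop, Forlano}, and $B$ then maps back. Using $\int_{H^{-1}_0}g\,d\nu^0=\int_{L^2_0}(g\circ B)\,d\rho^0$ with $g=f\circ\Phi_0^{\textup{KdV}}(t)$ and cancelling $B^{-1}\circ B$, the invariance of $\nu^0$ reduces to $\int(f\circ B)(\Phi^{\textup{mKdV}}(t)(q))\,d\rho^0=\int(f\circ B)\,d\rho^0$, which is precisely the mean-zero version of Theorem~\ref{THM:mkdv}\,(iii) applied to $f\circ B\in L^1(L^2_0;d\rho^0)$ (equivalently $f\in L^1(H^{-1}_0;d\nu^0)$).

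Part (ii) follows from (i) by the same mechanism, now conjugating by the constant-shift homeomorphism $\tau_\al(q)=q-\al$. Since $\nu^\al=(\tau_{-\al})_\#\nu^0$ and $\Phi^{\textup{KdV}}_\al(t)\circ\tau_{-\al}=\tau_{-\al}\circ\Phi_0^{\textup{KdV}}(t)$, the change-of-variables identity reduces the invariance of $\nu^\al$ under $\Phi^{\textup{KdV}}_\al$ to the invariance of $\nu^0$ under $\Phi_0^{\textup{KdV}}$ from (i), applied to the shifted test function $f\circ\tau_{-\al}$; no new analysis is required.

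Part (iii) is the analytic core and where I expect the real difficulty. The starting point is the classical Miura intertwining: for smooth data, the map $q\mapsto u:=\dx q+q^2$ sends solutions of defocusing mKdV \eqref{mkdv} to solutions of KdV \eqref{KdV}, a direct (if lengthy) computation, so that $B\circ\Phi^{\textup{mKdV}}(t)(q)=u(t)-\|q\|_{L^2(\T)}^2$ where $u(t)$ is the KdV evolution of $u_0=\dx q_0+q_0^2$. The plan is to upgrade this to the low-regularity support of $\rho^0$ by density and continuity: approximate $q\in\supp\rho^0$ by smooth $q_n\to q$ in $L^2(\T)$, and use continuity of $\Phi^{\textup{mKdV}}(t)$ on $L^2(\T)$ \cite{KapTop, Forlano}, of $B:L^2(\T)\to H^{-1}(\T)$, and of the KdV flow on $H^{-1}(\T)$ from \cite{KapTopKdv, KV}, to pass both sides to the limit; the required low-regularity intertwining may alternatively be extractable from the analysis of the Miura map in \cite{KTRiccati}. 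The delicate point---and the reason the mean-zero restriction is imposed---is the bookkeeping of the mean: since $u=\dx q+q^2$ carries the conserved mKdV mass $\|q\|_{L^2(\T)}^2$ as its mean, the mean-zero image $B(q)=u-\|q\|_{L^2(\T)}^2$ does not itself solve mean-zero KdV, and its evolution is tied to the full KdV evolution of $u$ at mean level $\|q\|_{L^2(\T)}^2$. One must therefore track how the KdV flow of \cite{KapTopKdv, KV} on a fixed-mean fibre $H^{-1}_\al(\T)$ relates, via the shift $\tau_\al$ and the translation (Galilean) structure of KdV on $\T$, to the mean-zero dynamics, and verify that the definition $\Phi^{\textup{KdV}}_\al(t)=\tau_{-\al}\circ\Phi_0^{\textup{KdV}}(t)\circ\tau_\al$ reproduces the reference normalization on each $H^{-1}_\al(\T)$. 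I expect the main obstacle to be exactly this reconciliation at low regularity: showing that the two a priori distinct constructions---the Miura-transported mKdV flow and the flow of \cite{KapTopKdv, KV}---coincide as maps on $H^{-1}_\al(\T)$, and not merely as formal solutions of the same equation.
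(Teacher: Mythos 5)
Your parts (i) and (ii) coincide with the paper's own proof. For (i), the paper performs exactly the pushforward change of variables you describe, inserting the indicator $\ind_{L^2_{0}(\T)}(\Phi^{\textup{mKdV}}(t)(q^0))$ (justified by conservation of the mean and of the $L^2$-norm) and then invoking the mean-zero version of Theorem~\ref{THM:mkdv}(iii); part (ii) is dispatched there in one line by conjugation with $\tau_\al$, as you do. For (iii), your scheme is also the paper's: set $q^0=B^{-1}(w^0)$, approximate by $q_n^0=\pi_n q^0$, put $w_n^0=B(q_n^0)$, and pass to the limit using continuity of $B$, of $\Phi^{\textup{mKdV}}(t)$ on $L^2(\T)$, and of the \cite{KapTopKdv, KV} solution map on $H^{-1}(\T)$, together with one final assertion: that for smooth data the classical KdV solution $w_n(t)$ equals $B\circ\Phi^{\textup{mKdV}}(t)\circ B^{-1}(w_n^0)$.

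The difference is that you stop short of proving that final assertion and flag the mean bookkeeping as the real obstacle, whereas the paper states the assertion without further comment (``since each $w_n^0$ are smooth, $w_n(t)=\Phi^{\textup{KdV}}(t)(w_n^0)$''). Your worry is well founded, and it is precisely at this point. With the paper's conventions, if $q$ solves \eqref{mkdv} then $u=\dx q+q^2$ solves \eqref{KdV}, so $w(t):=B(q(t))=u(t)-c$ with $c=\|q^0\|_{L^2(\T)}^2$ conserved; hence
\begin{align*}
\dt w = -\dx^3 w + 6w\dx w + 6c\,\dx w,
\end{align*}
which is not \eqref{KdV}, and the genuine KdV evolution of $w_n^0$ is the Galilean translate $w_n(t,x)=[B(q_n(t))](x-6c_n t)$, $c_n=\|q_n^0\|_{L^2}^2$. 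Equivalently, the corrected Miura map intertwines KdV with the renormalized equation \eqref{renormmkdv}, not with \eqref{mkdv} itself; the two mKdV flows differ by the gauge translation $x\mapsto x-12tM(q)$ recorded in Remark~\ref{RMK:belowL2}. So the identity needed at the end of both your scheme and the paper's holds only if $\Phi^{\textup{mKdV}}$ in \eqref{intro:Kdvmeanzero} is understood as the renormalized flow, or if the Galilean correction is inserted. Note that parts (i) and (ii) are insensitive to this choice: $\mu_s$ and the density $F$ are invariant under spatial translations (the quantities $A(\kk,q)$ and $\|q\|_{L^2}$ are translation invariant), so $\rho^0$ is preserved by either gauge of the mKdV flow. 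In short: your (i)--(ii) are complete and identical to the paper; your (iii) is the paper's argument minus its final one-line claim, and the difficulty you isolate there is genuine --- it is exactly the step the paper's proof passes over, and closing it requires the renormalization/Galilean reconciliation you anticipate rather than a purely formal density argument.
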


We also view Theorem~\ref{THM:kdv} as an extension of the results concerning the invariance of the white noise measure on $\T$, formally given by
\begin{align}
	d\wt{\mu}_{0}:=Z^{-1} \exp( - M(q))dq, \label{white}
\end{align}
under the flow of KdV.
Rigorously, the white noise is the Gaussian measure on periodic distributions with the identity as covariance operator. As such, it is supported on $H^{\s}(\T)$ for $\s<-1/2$. As $M(q)$ is also a conserved quantity for KdV \eqref{KdV}, we expect \eqref{white} to be invariant under the flow of KdV, which was rigorously proved by Quastel, Valk\'{o}, and Oh~\cite{QV, OhWhite, OhWhite2, OQV}; see also~\cite{KMV}. The invariant measures we construct in Theorem~\ref{THM:kdv} are supported on $H^\s(\T) \setminus H^{s-\frac32}(\T)$ for $\s<s-\frac32$ and $\frac12 < s <1$,
 thus covering the remaining well-posedness range between the sharp well-posedness results in $H^{-1}(\T)$ \cite{KapTopKdv, KV} and the sharp well-posedness in $H^{-\frac{1}{2}}(\T)$ for which the data-to-solution map is uniformly continuous on bounded sets \cite{KPV4, CCT}.

We emphasise that the invariant measures we construct for the defocusing mKdV, defocusing NLS, and  KdV cover the lowest possible regularities for which these equations are well-posed. Indeed, their support is rougher than that of the Gibbs measures for mKdV and NLS and of the white noise for KdV.

\begin{remark}\rm

 Heuristic computations suggest that $\nu_{s,R}^{0}$ is absolutely continuous with respect to the Gaussian measure \begin{align}
 Z_{s,R}^{-1}\ind_{\{ \| w\|_{H_{0}^{-1}}^{2}\leq R\}}\exp \big( -\tfrac{1}{2} \| |\partial|^{-1} \mathfrak{m}_{s}(-i\dd)w\|_{L^2_{0}}^{2}\big) dw.
\end{align}
In particular, by Lemma~\ref{LEM:ms}, the Gaussian measure above with covariance operator $|\dd|^{-2}\mathfrak{m}_{s}(-i\dd)^{2}$ is equivalent to the Gaussian measure
\begin{align*}
d\wt{\mu}_{s-1} = Z_{s}^{-1} \exp( -\tfrac{C_s}{2}\|w\|_{H_{0}^{s-1}(\T)}^{2})dw,
\end{align*}
with the same constant $C_{s}$ as in \eqref{Cs}.
It would be of interest to rigorously prove this result, perhaps using ideas from \cite{CMK, KTRiccati}.
\end{remark}

Before discussing further details about the proof, we make the following remarks regarding the regularity restriction on $s$ in Theorem~\ref{THM:mkdv}.

\begin{remark}\rm \label{RMK:above}
	Our interest in the range $\frac12<s<1$ is due to the low regularity of the conserved quantities considered and corresponding support of the measures $\rho_{s,R}$, which is strictly rougher than that of the Gibbs measure \eqref{Gibbs2} for mKdV.
	We believe our method can be extended to larger values of $s$, by considering further differences in defining $\mathcal{A}(\kk,q)$ in \eqref{Acal:intro}, and introducing additional cut-offs depending on conserved quantities of the form $\mathcal{E}_{s}(q)$ at regularities $s-1/2-\eps$, for small $\eps>0$. To keep the exposition short, we do not pursue this direction here.
\end{remark}

\begin{remark}\rm \label{RMK:belowL2}
	An interesting open question would be to extend the construction and invariance results in Theorem~\ref{THM:mkdv} to the singular range $0<s\leq 1/2$.
	The resulting measures would be supported strictly below $L^2(\T)$.
	This presents many additional challenges: Firstly, one would have to suitably extend the results in Section~\ref{SEC:operators} beyond $L^2(\T)$-potentials.
 Secondly, there is the question of dynamics:
 there is no known well-posedness result on the $L^2$-based Sobolev spaces $H^{\s}(\T)$ for $\s<0$, which contain the support of the intended measure. In fact, \eqref{mkdv} is ill-posed below $L^2(\T)$~\cite{Molinet}. Thus, it is necessary to consider the following renormalized mKdV equation:
	\begin{align}
		\dt q=-\dx^{3} q\pm 6 \big( q^2 -2 M(q) \big)  \dx q, \label{renormmkdv}
	\end{align}
	first introduced by Bourgain~\cite{BO2}. In $L^2(\T)$, mKdV~\eqref{mkdv} and the renormalized mKdV~\eqref{renormmkdv} are equivalent, as solutions of the former can be related to solutions of the latter via the gauge transformation
	$q(t,x) \mapsto q\big(t, x\pm 12 t M(q)  \big). $

	Outside of $L^2(\T)$, the best known result in this direction is by Kappeler-Molnar~\cite{KapMol} using the complete integrability to prove that  defocusing mKdV \eqref{renormmkdv} is locally well-posed in the Fourier-Lebesgue spaces $\FL^{p}(\T)$ for $2<p<\infty$ and globally for small data (this small data result extends to the focusing case too).
We point out that the samples of $\mu_s$ for $0<s\leq 1/2$ belong to $\FL^{p}(\T)$ almost surely provided that $p>1/s$.
Therefore, if a candidate invariant measure is absolutely continuous with respect to $\mu_s$, the rigorous invariance of the measure would follow from local well-posedness of \eqref{renormmkdv} in $\FL^p(\T)$ for $p>1/s$.
Unfortunately, the local well-posedness result in \cite{KapMol} does not seem to be suitable as the time-of-existence of the solutions does not depend in an explicit way on the size of the initial data, so it is unclear if one could adapt Bourgain's invariant measure argument~\cite{BO94}.

\end{remark}

\begin{remark}\rm \label{RMK:focusing}
	For the focusing mKdV equation, we expect to be able to construct corresponding measures $\rho_{s,R}$ using the series expansion \eqref{alpha}. However, as we previously discussed, this would enforce $\kk\geq 100 (1+R)$, which we find unsatisfactory. In any case, the invariance of these measures would then follow from the well-posedness in \cite{Forlano} and the method delineated in this paper.

\end{remark}

\begin{remark}\rm \label{RMK:taming}

A minor modification of the arguments in this paper allow us to construct the wider class of probability measures
\begin{align}
d\,\mathfrak{p}_{s,a}:= Z_{s}^{-1}\, \exp\bigg(- \int_1^{\infty} \kk^{2s} \mathcal{V}(\kk, q)  \, d\kk - \| q\|_{L^2}^{2a} \bigg) d\mu_{s}, \label{Gibbstamed}
\end{align}
provided that $a>a(s)$ is sufficiently large, which are absolutely continuous with respect to the Gaussian measures $\mu_s$. In view of the inequality
\begin{align*}
\ind_{\{|x|\leq R\}} \leq e^{R^{a}}e^{-|x|^{a}}, \quad x\in \R, \quad a,R\geq 0,
\end{align*}
we see that the construction of $\mathfrak{p}_{s,a}$ implies the construction of $\rho_{s,R}$ in \eqref{GibbsEmeas}.
Moreover, the measures $\mathfrak{p}_{s,a}$ are also invariant under the defocusing mKdV flow.
An advantage of these measures over those with an $L^2$-cut-off is that they individually see the whole support of the Gaussian measures $\mu_s$. More precisely, $\supp \mathfrak{p}_{s,a}=\supp \mu_s$ for each such fixed $a$, which should be compared to \eqref{supppsr} in Theorem~\ref{THM:mkdv} (ii).

In the context of Gibbs measures for focusing nonlinear wave-type equations, a subtraction of a power of the $L^2$-norm in the exponential of the density (or a renormalised version of it) rather than an $L^2$-cut-off is necessary due to the lack of $L^2$-conservation. See \cite{BO99, OOT1, OOT2} and the references therein.

\end{remark}

\begin{remark}\rm
	In \cite{KT}, Koch-Tataru constructed conserved quantities for mKdV \eqref{mkdv} on the line which control the $H^\s(\R)$-norm for any $\s>-\frac12$, for small initial data. If one can extend their results to the periodic setting and for large data, it may be of interest to study the candidate weighted Gaussian measures associated with these conserved quantities.

\end{remark}

We now discuss in more detail some elements of the proof of Theorem~\ref{THM:mkdv}. We split the argument into two parts: (i) construction of the probability measures \eqref{GibbsEmeas}, and (ii) invariance under the dynamics of the defocusing mKdV~\eqref{mkdv}.

In order to construct the measures \eqref{GibbsEmeas}, we need to prove that the density is integrable with respect to the Gaussian measure $\mu_s$ in \eqref{gauss0}. Spurred on by the construction of the $\Phi_{3}^{4}$-measure given by Barashkov-Gubinelli~\cite{BG}, the Bou\'e-Dupuis variational formula \cite{BD, Ust} has proved to be a very effective tool at constructing probability measures with respect to a base Gaussian measure. This reduces the integrability of the density to a stochastic control problem which can be handled via suitable estimates on the logarithm of the density. In our setting, this translates to bounds on the remainder piece $\mathcal{V}(\kk,q)$ which are both valid for any $\kk\geq 1$, and come with enough decay in $\kk$ for convergence of the integral in~\eqref{GibbsEmeas}.

The bounds on $\mathcal{A}(\kk,q)$ come from the recent work of the second author, Killip, and Vi\c{s}an~\cite{FKV} on the invariance of the Gibbs measure for the defocusing mKdV \eqref{mkdv} on the line. The key point is that these bounds are proved non-perturbatively; thus requiring no relation between $\kk$ and the $L^2$-norm of $q$. In fact, the bounds in \cite{HGKV, Forlano} are of no use.

	As to the dynamical problem of invariance, we are in the fortunate position of having deterministic global well-posedness on the support of our measures. It thus remains to prove the invariance, for which we need an approximating flow. As we rely on the well-posedness proved from the method of commuting flows, it is natural to first consider the approximate Hamiltonian flow of the quantity $H^{\text{mKdV}}_{\kk}$, $\kk\geq 1$, defined in \eqref{Hvk}. The heuristic at play is that the $H^{\text{mKdV}}_{\kk}$-flow (i) is easily solvable in $L^2(\T)$, (ii) commutes with the $H^{\text{mKdV}}$-flow, and (iii) converges to the $H^{\text{mKdV}}$-flow as $\kk\to \infty$. See Proposition~\ref{P:HK well}.

	The main idea is then to prove that the $H^{\text{mKdV}}_{\kk}$-flow leaves the measures $\rho_{s,R}$ invariant. We proceed in a standard way by considering a truncated version of the $H^{\text{mKdV}}_{\kk}$-flow. Unfortunately, at this truncated level, we cannot guarantee that $\mathcal{E}_{s}$ is conserved.
	To resolve this, we use ideas from \cite{NORBS, TV1,TV2}, and instead show that these quantities are almost conserved in the sense that asymptotically we have conservation in the truncation limit. See Proposition~\ref{PROP:asym-cons}. Due to the nicer structure of $\mathcal{E}_{s}$ and of the tameness of the truncated $H^\text{mKdV}_\kk$-flow, our version of this is much stronger: it is completely deterministic and for all times, not just at time zero.

The remaining of the paper is organized as follows. In Section~\ref{SEC:preliminary}, we introduce the relevant notation and the structures derived from the complete integrability of the equation needed for the construction and invariance of the measures.
Section~\ref{SEC:construct} is devoted to the construction of the measures $\rho_{s,R}$, while Section~\ref{SEC:invariance} proves their invariance under the mKdV dynamics. In Subsection~\ref{SEC:kdv}, we prove Theorem~\ref{THM:kdv} on invariant measures for KdV. Lastly, in Section~\ref{SEC:NLS}
we construct and prove the invariance of probability measures analogous to the defocusing mKdV ones under the flow of the defocusing cubic NLS.

\section{Preliminaries}\label{SEC:preliminary}
\subsection{Notations}
We write $A\les B$ to denote that there exists $C>0$ such that $A \leq CB$, and $A\ll B$ when $A\leq C B$ with $0<C<\frac12$. We use $\les_\al$ to note the dependence of the implicit constant on the parameter $\al$.
Our conventions for the Fourier transform on the line are
\begin{align*}
	\ft f(\xi)=\tfrac{1}{\sqrt{2\pi}}\int_{\R} e^{-i\xi x}f(x)dx \quad \text{and}\quad f(x)=\tfrac{1}{\sqrt{2\pi}}\int_{\R}e^{i\xi x}\ft f(\xi)d\xi,
\end{align*}
while on $\T$ we set
\begin{align*}
	\ft f(\xi)= \int_{0}^{1} e^{-i\xi x} f(x)dx \quad \text{and} \quad f(x)=\sum_{\xi \in 2\pi \Z}\ft f(\xi)e^{i\xi x}.
\end{align*}

\noi
In this context, Plancherel's theorem takes the form
\begin{align*}
	\|f\|_{L^2(\T)}^{2}=\int_0^{1}|f(x)|^2 dx= \sum_{\xi \in 2\pi \Z} |\ft f(\xi)|^{2}.
\end{align*}
We have $\ft{\dx f}(\xi)=i\xi \ft f(\xi)$, and we define the $L^2$-based Sobolev spaces $H^s$, $s\in \R$, via
\begin{align*}
	\|f\|_{H^{s}(\T)}^{2}=\sum_{\xi \in 2 \pi \Z} \jb{\xi}^{2s} |\ft f(\xi)|^{2} \quad \text{and} \quad  \|f\|_{H_{\kk}^{s}(\T)}^{2}= \sum_{\xi \in2 \pi \Z} (4\kk^2+\xi^2)^{s} |\ft f(\xi)|^{2},
\end{align*}
where $\kk>0$. The definitions on the line are analogous.
Note that by Cauchy-Schwarz,
\begin{align}
	\| f\|_{L^{\infty}(\T)} \les \| \ft f(\xi)\|_{\l^{1}_{\xi}}   \leq  \| (4\kk^2+\xi^2)^{-\frac{1}{2}}\|_{\l^2_{\xi}}   \|f\|_{H^{1}_{\kk}(\T)} \les \kk^{-\frac{1}{2}}\|f\|_{H^{1}_{\kk}(\T)} . \label{LinftyH1k}
\end{align}

\noi
For $1\leq p\leq \infty$, we define the spaces $W^{s,p}(\T)$ via the norm
\begin{align*}
	\| f\|_{W^{s,p}(\T)} =\| \mathcal{F}^{-1}\{ \jb{\xi}^{s} \ft f(\xi)\}\|_{L^{p}(\T)}.
\end{align*}
In the remaining of the paper we will omit $\T$ when indexing norms, for simplicity.
Also, we will use $\partial$ instead of $\partial_x$ when clear from the context.

\begin{lemma}\label{LEM:interp}
	Given $s>\tfrac{1}{2}$ and $\kk\geq 1$, we have
	\begin{align*}
		\| (2\kk \pm \dd)^{-1}q \|_{L^{\infty}} \les \kk^{-\frac{1+\ta}{2}} \|q\|_{L^2}^{1-\ta}\|q\|_{H^{s}}^{\ta},
	\end{align*}
	for any $0\leq \ta\leq 1$. Moreover, given any $0 \leq \s <\tfrac{1}{2}$, we have
	\begin{align*}
		\| (2\kk \pm \dd)^{-1}q\|_{L^{\infty}} \les \kk^{-\s-\frac{1}{2}}\|q\|_{H^{\s}}.
	\end{align*}
\end{lemma}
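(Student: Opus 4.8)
The plan is to reduce both estimates to a single weighted summation bound on the Fourier side. Since the Fourier multiplier of $(2\vk\pm\dd)^{-1}$ is $(2\vk\pm i\xi)^{-1}$, whose modulus is $(4\vk^2+\xi^2)^{-\frac12}$, the elementary bound $\|f\|_{L^\infty}\les \|\ft f\|_{\l^1_\xi}$ already used in \eqref{LinftyH1k} gives
\begin{align*}
	\|(2\vk\pm\dd)^{-1}q\|_{L^\infty} \les \sum_{\xi\in 2\pi\Z} \frac{|\ft q(\xi)|}{(4\vk^2+\xi^2)^{\frac12}}.
\end{align*}
Both parts then follow by inserting an appropriate Sobolev weight $\jb{\xi}^{b}$ and applying Cauchy--Schwarz in $\xi$, so the whole lemma hinges on understanding the $\vk$-decay of a weighted $\l^2$ sum.

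The technical core is the estimate
\begin{align*}
	S_b(\vk):=\sum_{\xi\in 2\pi\Z}\frac{1}{(4\vk^2+\xi^2)\jb{\xi}^{2b}} \les \vk^{-\min(1+2b,\,2)}\log(1+\vk), \qquad b\geq 0,\ \vk\geq1,
\end{align*}
where the logarithmic factor is genuinely needed only at the endpoint $b=\tfrac12$. I would prove this by comparing the sum with the corresponding integral (the frequencies are equally spaced in $2\pi\Z$) and splitting into the regimes $|\xi|\les\vk$ and $|\xi|\ges\vk$: on the first, the summand is comparable to $\vk^{-2}\jb{\xi}^{-2b}$, whose sum is $\approx\vk^{-2}$ if $2b>1$ and $\approx\vk^{-1-2b}$ if $2b<1$ (with a logarithm at $2b=1$); on the second, the summand is comparable to $\jb{\xi}^{-2-2b}$, whose tail contributes $\approx\vk^{-1-2b}$.

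For the first inequality I would apply Cauchy--Schwarz with the weight $\jb{\xi}^{\ta s}$ to obtain
\begin{align*}
	\|(2\vk\pm\dd)^{-1}q\|_{L^\infty} \les S_{\ta s}(\vk)^{\frac12}\,\|q\|_{H^{\ta s}},
\end{align*}
and then use the Sobolev interpolation $\|q\|_{H^{\ta s}}\le \|q\|_{L^2}^{1-\ta}\|q\|_{H^s}^{\ta}$ (H\"older in $\xi$ with conjugate exponents $\tfrac{1}{1-\ta},\tfrac1\ta$) for the last factor. It remains to verify $S_{\ta s}(\vk)\les \vk^{-(1+\ta)}$: when $\ta s<\tfrac12$ the summation bound gives $S_{\ta s}(\vk)\les\vk^{-1-2\ta s}\le \vk^{-1-\ta}$ since $2s\ge1$, while when $\ta s\ge\tfrac12$ it gives $S_{\ta s}(\vk)\les\vk^{-2}\log(1+\vk)\le\vk^{-1-\ta}$. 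For the second inequality I apply Cauchy--Schwarz with the weight $\jb{\xi}^{\s}$ directly, obtaining $\les S_{\s}(\vk)^{\frac12}\|q\|_{H^\s}$; since $\s<\tfrac12$ the bound yields $S_\s(\vk)^{\frac12}\les \vk^{-\frac{1+2\s}{2}}=\vk^{-\s-\frac12}$, as claimed.

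The only real obstacle is bookkeeping at the endpoint: when $\ta s=\tfrac12$ the sum $S_{\ta s}(\vk)$ carries the factor $\log(1+\vk)$, and absorbing it requires the \emph{strict} inequality $s>\tfrac12$, which forces $\ta=\tfrac{1}{2s}<1$. Indeed $\vk^{\ta-1}\log(1+\vk)$ stays bounded for $\vk\ge1$ precisely because $\ta<1$, so the logarithm is compensated by the genuine gain $\vk^{-(1-\ta)}$; away from this single endpoint the argument is a routine interpolation and all powers of $\vk$ match up exactly.
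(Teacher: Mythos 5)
Your proof is correct, but for the first inequality it takes a genuinely different route from the paper. The paper proves two endpoint bounds for the same quantity --- $\| (2\vk \pm \dd)^{-1}q \|_{L^{\infty}} \les \vk^{-1}\|q\|_{H^{s}}$ via the Sobolev embedding $H^{\frac12+\eps}(\T)\hookrightarrow L^{\infty}(\T)$, and $\| (2\vk \pm \dd)^{-1}q \|_{L^{\infty}} \les \vk^{-\frac12}\|q\|_{L^2}$ via Cauchy--Schwarz as in \eqref{LinftyH1k} --- and then simply takes the geometric mean of these two bounds with exponents $\ta$ and $1-\ta$, producing $\vk^{-\ta}\cdot \vk^{-\frac{1-\ta}{2}} = \vk^{-\frac{1+\ta}{2}}$ with no borderline cases and no logarithms anywhere; the hypothesis $s>\frac12$ enters only once, in the embedding step. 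You instead perform a single Cauchy--Schwarz at the intermediate regularity $\ta s$, interpolate the Sobolev norms $\|q\|_{H^{\ta s}} \leq \|q\|_{L^2}^{1-\ta}\|q\|_{H^{s}}^{\ta}$ on the Fourier side, and rely on a summation lemma for $S_b(\vk)$ whose endpoint $b=\frac12$ carries a logarithm. Your argument is sound: the dangerous combination of the endpoint with $\ta=1$ cannot occur, since $\ta s=\frac12$ forces $\ta=\frac{1}{2s}<1$, exactly as you note in your final paragraph (your intermediate claim $S_{\ta s}(\vk)\les \vk^{-2}\log(1+\vk)\leq \vk^{-1-\ta}$ would, as literally written, fail at $\ta=1$, but there the logarithm is absent because $\ta s=s>\frac12$, so nothing is lost). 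As for what each approach buys: the paper's geometric-mean trick is shorter and yields a constant independent of $\ta$, while your route is entirely self-contained on the Fourier side (no Sobolev embedding needed) and your single lemma on $S_b(\vk)$ covers both halves of the statement uniformly --- indeed your treatment of the second inequality coincides with the paper's. The only cost is that your implicit constants depend on $\ta$ (through the endpoint absorption), which is harmless for the applications in the paper, where $\ta$ is fixed.
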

\begin{proof}
	On the one hand, Sobolev embedding implies that
	\begin{align}
		\|  (2\kk \pm \dd)^{-1}q\|_{L^{\infty}} \les \| (2\kk \pm \dd)^{-1}q \|_{H^{\frac{1}{2}+\eps}} \les \kk^{-1}\| q\|_{H^s}. \label{interp1}
	\end{align}
	provided that $\eps>0$ is sufficiently small.
	On the other hand, Cauchy-Schwarz gives
	\begin{align*}
		\|  (2\kk \pm \dd)^{-1}q\|_{L^{\infty}} \les \|  (4\kk^2+\xi^2)^{-\frac{1}{2}} \ft q(\xi) \|_{\l^{1}_{\xi}}   \les\|  (4\kk^2+\xi^2)^{-\frac{1}{2}}\|_{\l^2_\xi}  \|q\|_{L^2}.
	\end{align*}
	By splitting the above summation into the regions $|\xi|\les 2\kk $ and $|\xi|\gg 2\kk$, we get
	\begin{align}
		\|  (2\kk \pm \dd)^{-1}q\|_{L^{\infty}}  \les \kk^{-\frac{1}{2}}\|q\|_{L^2}. \label{interp2}
	\end{align}
	Now the first inequality follows by interpolation between \eqref{interp1} and \eqref{interp2}.
	The proof of the second estimate follows the same ideas as in \eqref{LinftyH1k}.
\end{proof}

\subsection{Structures for mKdV}\label{SEC:operators}

In this section, we recall important facts regarding the Green's function of the Lax operator for the defocusing mKdV~\eqref{mkdv}:
\begin{align*}
	\mathcal{L}=\L_{q}=\begin{bmatrix}
		-\dd& q\\
		-q &  \dd \\
	\end{bmatrix}.
\end{align*}
For $q\in L^2(\T)$, we view $\L$ as an unbounded operator with domain $H^{1}(\R)\times H^1(\R)$ which is anti-self adjoint on $L^2(\R)\times L^2(\R)$.
Thus the resolvent $R(\kk):=(\L+\kk)^{-1}$ exists for all $\kk\geq \tfrac{1}{2}$ and it is jointly analytic on $\{ (\kk, q):\, \kk\geq \tfrac{1}{2}, \,\, q\in L^2(\T)\}$. Moreover, it admits an integral kernel, which we call the Green's function $G(x,y;\kk,q)$, that is a $2\times 2$ matrix satisfying
\begin{align}
	G(x,y;\kk;q)-G_0 (x,y;\kk)=\jb{\dl_x, [R(\kk)-R_0(\kk)]\dl_y}, \label{Gcts}
\end{align}
where $G_{0}$ is the Green's function of the free resolvent $R_{0}(\kk):=(\L_{0}+\kk)^{-1}$, $q\equiv 0$, and is given by
\begin{align*}
	G_{0}(x,y;\kk)=e^{-\kk|x-y|}
	\begin{bmatrix}
		\ind_{\{ x<y\}} & 0 \\
		0 & \ind_{\{ y<x\}}
	\end{bmatrix}.
\end{align*}
Note that $G$ depends on the potential $q$ and, when the context is clear, we simply write $G(x,y;\kk)$ in place of $G(x,y;\kk, q)$. The same comment applies to other related quantities such as the components of the Green's function in \eqref{Gsyms} and the elements in \eqref{rdefn}.
Furthermore, $G$ is a continuous function of $(x,y)\in \R^2$, $x\neq y$, $G(x,y;\kk)\in L^{\infty}(\R^2)$, and all components of $G(x,y;\kk)$ decay to zero as $|x|\to\infty$ for fixed $y\in\R$.

As the Lax operator satisfies
\begin{align*}
	\L^{\ast}=-\L = \begin{bmatrix}
		0 & 1 \\
		1 & 0
	\end{bmatrix} \L \begin{bmatrix}
		0 & 1 \\
		1 & 0
	\end{bmatrix},
\end{align*}
the entries of $G(x,y;\kk)$ enjoy the following symmetry properties:
\begin{equation}
	\begin{gathered}
		G_{11}(y,x;\kk)=G_{22}(x,y;\kk),\\
		G_{12}(y,x;\kk)=G_{12}(x,y;\kk), \quad
		G_{21}(y,x;\kk)=G_{21}(x,y;\kk).
	\end{gathered}\label{Gsyms}
\end{equation}
Furthermore,
\begin{align}
	\det G(x,y;\kk) = 0 \quad \text{for all} \quad x,y\in \R^2, x\neq y. \label{detG}
\end{align}
This follows from showing that the gradient of $\det G(x,y;\kk)$ in the variables $x,y$ vanishes and then appealing to the decay at $\pm \infty$. See \cite{FKV} for details.

By \eqref{Gcts}, we define the following linear combinations of the elements of $G$ on the diagonal:
\begin{align}
	\g(x; \kk, q)&: = (G-G_0)_{11}(x,x;\kk, q)+(G-G_0)_{22}(x,x;\kk, q), \nonumber\\
	g_{+}(x; \kk,q)& := G_{21}(x,x;\kk, q)+G_{12}(x,x;\kk, q),\nonumber\\
	g_{-}(x; \kk, q)&: = G_{21}(x,x;\kk, q) - G_{12}(x,x;\kk, q). \label{rdefn}
\end{align}

\noi
We will omit the dependence of $\g,g_+,g_-$ on all or some of their parameters when clear from context. We can explicitly write the quadratic in $q$ contributions of $\g(\kk,q)$ as follows:
\begin{align}
	\g^{[2]}(\kk, q):=-2\,\tfrac{q}{2\kk-\dd}\cdot \tfrac{q}{2\kk+\dd}, \label{G2}
\end{align}
with the shorthand $\frac{q}{2\kk \pm \dd}=(2\kk\pm\dd)^{-1}q$.
Also, let $\g^{[\geq 4]}(\kk):=\g(\kk)-\g^{[2]}(\kk)$.
Lastly, we define $A(\kk, q)$ as follows
\begin{align}\label{defA}
	A(\kk, q) := \int_\T \rho(\kk, q) \, dx,  \qquad
	\rho(\kk, q) := \frac{q \, g_{-}(\kk, q)}{2+\g(\kk, q)}.
\end{align}
The conservation of $A(\kk,q)$ for smooth solutions of mKdV follows from \eqref{derivA}, \eqref{Hmkdv}, and \eqref{Poisson}; see \cite{Forlano}.

We now pause and make an important remark in order to dispel potential confusion arising from our convention.
We stress that we are viewing the operators, such as $\mathcal{L}$ and $R(\kk)$, as operators on the Hilbert space $L^2(\R)$ with a periodic potential $q$, and not as acting on $L^2(\T)$. This is important since the construction of invariant measures independently of the mass-cut-off $R$  relies crucially on the macroscopic definition \eqref{defA} for $A(\kk,q)$. However, this seems difficult to justify from the series expansion for $A(\kk, q)$ if we had viewed these operators as acting on $L^2(\T)$.
The main source of the difficulty is in establishing an analogue of \eqref{detG}, which is a key step in this argument, for potentials in $L^2(\T)$; see the proof of Lemma 4.1 in \cite{HGKV}.
Indeed, we expect \eqref{detG} to not be true in the purely periodic setting since it is not true for the periodic free Green's function
\begin{align}
	\mathcal{G}_{0}(x,y;\kk) = \frac{1}{1-e^{-\kk}}
	\begin{bmatrix}
		e^{\kk( x-y-\ceil{x-y})} & 0 \\
		0 & e^{-\kk(x-y-\floor{x-y})}
	\end{bmatrix} \label{periodG}
\end{align}
where $\ceil{x}$ and $\floor{x}$ denote the smallest integer $n$ such that $x\leq n$, or the largest integer $m$ such that $m\leq x$, respectively.
While we may follow the same argument and show that the determinant of the periodic Green's function is independent of $x\neq y$, it may depend upon $q$ and this would need to be included in the ansatz for the measures.

Also, note that because of our convention, the operators $(2\kk\pm \dd)^{-1}$ appearing in \eqref{G2} are defined via their action on $L^2(\R)$. However, since $q$ is periodic, we have
\begin{align*}
	\frac{q}{2\kk \pm \dd}(x) &= \int_{0}^{1} \bigg( \sum_{\xi \in 2\pi \Z} G_{0}(x,y+\xi;2\kk)\bigg) q(y)dy = \int_{0}^{1} \mathcal{G}_{0}(x,y;2\kk) q(y)dy,
\end{align*}
where $\mathcal{G}_{0}$ is defined in \eqref{periodG}. Note that the Fourier coefficients of the operator $(2\kk\pm\dd)^{-1}$ acting on $L^2(\T)$ are $(2\kk \pm i\xi)^{-1}$. Thus, we may view the operators in \eqref{G2} in the usual way as operators on $L^2(\T)$ whenever they act on a periodic function. The other operator of importance that will appear in this way is $\mathsf{R}_0 (\kk) := (\kk^2-\partial^2)^{-1}$, for which we have
\begin{align*}
	\mathsf{R}_0 (\kk) q(x) =\int_{\R} \frac{1}{2\kk}e^{-\kk |x-y|}q(y)dy= \int_{0}^{1} \bigg( \sum_{\xi \in 2\pi \Z}  \frac{1}{2\kk} e^{-\kk |x-y+\xi|}\bigg) q(y)dy,
\end{align*}
with periodic Fourier coefficients $(\kk^2+\xi^2)^{-1}$. We apply this remark at all further relevant instances in this paper without further discussion.

For $\kk \gg 1+\|q\|_{L^2(\T)}^2$, the following properties of $\g(\kk)$ and $g_{-}(\kk)$ are essentially proven in \cite{Forlano} with the ideas in \cite[Section 6]{KV}; see also \cite{HGKV}. For arbitrary $\kk\geq \tfrac{1}{2}$, the identities remain true by analytic continuation.
However, we need estimates which are valid for all $\kk\geq \tfrac{1}{2}$, and these were proven in a non-perturbative fashion in \cite{FKV}; see also \cite{KMV} for the case of KdV.

\begin{proposition}[Properties of $\g$, and $g_{-}$]\label{PROP:Gamma}
	Let $q\in L^2(\T)$ and $\kk\geq \tfrac{1}{2}$. Then:

	\noi\textup{(i)} We have $\g(\kk, q)\in H^{1}_{\kk}(\T)$, $g_{-}(\kk, q)\in H^{2}_{\kk}(\T)$, and the following estimates hold
	\begin{align}
		\| \g(\kk, q)\|_{H^{1}_{\kk}(\T)} & \les \kk^{-\frac{1}{2}} \|q\|_{L^2(\T)}^{2} [1+\kk^{-2}\|q\|_{L^2(\T)}^{4}], \label{gbd} \\
		\| \g^{[\geq 4]}(\kk, q)\|_{H^{1}_{\kk}(\T)}& \les \kk^{-\frac{3}{2}}\|q\|_{L^2(\T)}^{4} [1+\kk^{-1}\|q\|_{L^2}^{2}]\label{gbd2}, \\
		\|g_{-}(\kk, q)\|_{H^{2}_{\kk}(\T)}& \les \kk\|q\|_{L^2(\T)} +\kk^{-2}\|q\|_{L^2(\T)}^{5}. \label{rbdL}
	\end{align}
	In particular, if $q\in C^{\infty}(\T)$, then $\g(\kk,q), g_{-}(\kk,q)\in C^{\infty}(\T)$.

	\noi\textup{(ii)} The mappings $q\mapsto \g(\kk,q)$ and $q\mapsto g_{-}(\kk,q)$ are Lipschitz in the following sense:
	\begin{align}
		&\|\g(\kk,q_1)-\g(\kk,q_2)\|_{H^1_{\kk}(\T)} \notag \\
		&\quad\les \kk^{-\frac{1}{2}}\|q_1-q_2\|_{L^2(\T)} ( \|q_1\|_{L^2(\T)}+\|q_2\|_{L^2(\T)}) \bigl[ 1+\kk^{-\frac12} (\|q_1\|_{L^2(\T)} + \|q_2\|_{L^2(\T)})\bigr]^6, \label{g lip}\\
		&\|\g^{[\geq 4]}(\kk,q_1)-\g^{[\geq 4]}(\kk,q_2)\|_{H^1_{\kk}(\T)} \notag \\
		&\quad\les \kk^{-\frac{3}{2}}\|q_1-q_2\|_{L^2(\T)} ( \|q_1\|_{L^2(\T)}+\|q_2\|_{L^2(\T)}) \bigl[ 1+\kk^{-\frac12} (\|q_1\|_{L^2(\T)} + \|q_2\|_{L^2(\T)})\bigr]^6, \label{g lip4}\\
		&\|g_{-}(\kk, q_1) - g_{-}(\kk,q_2)\|_{H^2_\kk (\T)}  \notag \\
		&\quad \les  \kk \|q_1-q_2\|_{L^2(\T)}( \|q_1\|_{L^2(\T)}+\|q_2\|_{L^2(\T)})^2 [1+\kk^{-\frac{1}{2}}(\|q_1\|_{L^2(\T)}+\|q_2\|_{L^2(\T)})]^{6} \label{rLip},
	\end{align}
	for any $q_1, q_2\in L^2(\T)$.

	\noi\textup{(iii)} The following differential identities hold in the sense of distributions:
	\begin{align}
		\dx \g(\kk)& = 2q g_+(\kk)  \quad \text{and} \qquad
		\dx g_{-}(\kk)=-2\kk g_+(\kk), \label{Grderiv}
	\end{align}
	and also
	\begin{align}
		g_{-}(\kk)&= 4\kk \mathsf{R}_{0}(2\kk)( q\g(\kk)+q). \label{prformula}
	\end{align}

	\noi\textup{(iv)} The pointwise inequality holds:
	\begin{align}
		1+\g(\kk)> 0. \label{gammap1}
	\end{align}

	\noi\textup{(v)} Lastly, we have that
	\begin{align}
		\frac{\dl A(\kk,q)}{\dl q} &= g_{-}(\kk, q), \label{derivA}\\
		\frac{\dl g_{-}(x;\kk,q)}{\dl q}(y) & =G_{11}(x,y;\kk)^2 +G_{22}(x,y;\kk)^2  - G_{12} (x,y;\kk)^2- G_{21}(x,y;\kk)^2. \label{rderivq}
	\end{align}

\end{proposition}

The bounds in (i) are proved as follows: for large $\kk$, the resolvents $R(\kk)$ may be expanded as an infinite series for which good bounds hold. These can then be transferred to $R(\kk)$ for any $\kk\geq \tfrac{1}{2}$ by the first resolvent identity. The bounds in (ii) follow similarly by also using the second resolvent identity.
Whilst \eqref{g lip4} is not explicitly written in \cite{FKV}, it follows from the estimates there and similar arguments as required for \eqref{g lip}. As for \eqref{gammap1}, as $\mathcal{L}$ is anti-self-adjoint, we have $\jb{\phi, \mathcal{L} \phi}=0$
for all real-valued $\phi \in H^{1}(\R)\times H^{1}(\R)$ and hence
\begin{align}
\jb{\phi, R(\kk)\phi} = \kk \jb{ R(\kk)\phi, R(\kk)\phi}. \label{resolv1+}
\end{align}
The result then follows by choosing $\phi$ appropriately as an approximate identity and taking limits in \eqref{resolv1+}. See \cite{FKV} for details.
We note that \eqref{rderivq} follows from \eqref{rdefn} and the following consequence of the second resolvent identity
\begin{align*}
	\frac{d}{d\ta} G(x,y; \kk, q+\ta f) \bigg\vert_{\ta=0}  = -\int_{\R} G(x,z;\kk) \begin{bmatrix}
		0 & f(z) \\
		-f(z) & 0
	\end{bmatrix} G(z,y;\kk)dz.
\end{align*}

We now discuss the approximating flows and dynamics.
Given $\kk\geq \tfrac{1}{2}$ and $q\in L^2(\T)$, let
\begin{align}
	H^{\text{mKdV}}_{\kk}(q)  = 4\kk^2\int_{0}^{1} q(x)^2 dx-4\kk^3 A(\kk,q). \label{Hvk}
\end{align}

\begin{proposition}\label{P:HK well}
	Fix $\kk\geq \tfrac{1}{2}$.  Then, the Hamiltonian flow induced by $H^{\textup{mKdV}}_{\kk}$,
	\begin{align}
		\dt q = 4\kk^2 \dx q - 4\kk^3 \dx g_{-}(\kk,q) \label{Hkflow}
	\end{align}
	is globally well-posed on $L^2(\T)$ and commutes with the mKdV flow. Moreover,
	for fixed $q^0\in L^2(\T)$ and $T>0$,  let $\Phi^{\textup{mKdV}}(t)(q^0)$ and $\Phi^{\textup{mKdV}}_{\kk}(t)(q^0)$ denote the global solutions to \eqref{mkdv} and \eqref{Hkflow} with initial data $q^0$, respectively. Then,
	\begin{align}
		\lim_{\kk \to \infty} \sup_{|t|\leq T}\bigl\|\Phi^{\textup{mKdV}}_{\kk}(t)(q^0)-\Phi^{\textup{mKdV}}(t)(q^0)\bigr\|_{L^2 (\T)}=0.  \label{mkdvapprox}
	\end{align}
\end{proposition}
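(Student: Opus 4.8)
The plan is to solve \eqref{Hkflow} by a fixed-point argument in $L^2(\T)$, to globalise it via a conservation law, to deduce commutativity from the Poisson structure, and to prove \eqref{mkdvapprox} by matching the two generators asymptotically in $\vk$ and then running a compactness argument. Throughout I write $V_\vk(q):=4\vk^2\dd q-4\vk^3\dd g_{-}(\vk,q)$ for the right-hand side of \eqref{Hkflow}. For well-posedness I would put \eqref{Hkflow} in Duhamel form relative to the group generated by $4\vk^2\dd$, which is a translation and hence an isometry of every $H^{s}(\T)$. The nonlinear map $q\mapsto -4\vk^3\dd g_{-}(\vk,q)$ sends $L^2(\T)$ into itself, since $\|\dd g_{-}(\vk,q)\|_{L^2}\les\|g_{-}(\vk,q)\|_{H^1_\vk}\les\|g_{-}(\vk,q)\|_{H^2_\vk}$ is finite by \eqref{rbdL}, and it is locally Lipschitz on $L^2(\T)$ by \eqref{rLip}; a standard Picard iteration then yields local well-posedness with existence time depending only on $\vk$ and $\|q^0\|_{L^2}$. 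To globalise I would verify that the $L^2$-norm is conserved along \eqref{Hkflow}: pairing the equation with $q$, the transport term contributes $4\vk^2\int_0^1 q\,\dd q\,dx=0$, while the identities \eqref{Grderiv} give
\begin{align*}
	-4\vk^3\int_0^1 q\,\dd g_{-}(\vk)\,dx=8\vk^4\int_0^1 q\,g_{+}(\vk)\,dx=4\vk^4\int_0^1 \dd\g(\vk)\,dx=0 .
\end{align*}
Conservation of $\|q\|_{L^2}$ upgrades the local solutions to global ones.

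For commutativity with \eqref{mkdv} I would use that $H^{\textup{mKdV}}_\vk$ is an affine combination of the mass $\int_0^1 q^2\,dx$ and of $A(\vk,q)$, both of which Poisson-commute with $H^{\textup{mKdV}}$ under \eqref{Poisson}: the former by conservation of mass, the latter by the conservation of $A(\vk,q)$ recorded after \eqref{defA} (through \eqref{derivA}). Hence $\{H^{\textup{mKdV}}_\vk,H^{\textup{mKdV}}\}=0$. On smooth data, where both Hamiltonian flows are classical, this identity of Poisson-commuting generators gives $\Phi^{\textup{mKdV}}(t)\circ\Phi^{\textup{mKdV}}_\vk(s)=\Phi^{\textup{mKdV}}_\vk(s)\circ\Phi^{\textup{mKdV}}(t)$, and I would extend this to all $q^0\in L^2(\T)$ using the density of $C^\infty(\T)$ together with the continuous dependence supplied by the global well-posedness of both flows.

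The convergence \eqref{mkdvapprox} is the crux. Writing $g_{-}=4\vk\mathsf{R}_0(2\vk)(q\g(\vk)+q)$ via \eqref{prformula} and expanding the resolvents in powers of $\vk^{-1}$, the transport term of $V_\vk$ combines with the linear-in-$q$ part of $-4\vk^3\dd g_{-}$ into
\begin{align*}
	4\vk^2\dd q-16\vk^4\dd(4\vk^2-\dd^2)^{-1}q=-4\vk^2\dd^3(4\vk^2-\dd^2)^{-1}q,
\end{align*}
which converges to the dispersion $-\dd^3 q$ with error $O(\vk^{-2})$ in $L^2$ for smooth $q$; using \eqref{G2}, the cubic-in-$q$ part $-4\vk^3\dd\bigl(4\vk\mathsf{R}_0(2\vk)(q\,\g^{[2]}(\vk))\bigr)$ converges to $6q^2\dd q$ with error $O(\vk^{-1})$. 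This produces the consistency estimate $\|V_\vk(q)-(-\dd^3 q+6q^2\dd q)\|_{L^2}\les\vk^{-1}C(\|q\|_{H^N})$ for $q\in C^\infty(\T)$ and a suitable $N$, the point being that smoothness of $q$ renders the generic bounds of Proposition~\ref{PROP:Gamma} lossless. The difficulty — and the reason a direct Gronwall estimate on $r:=\Phi^{\textup{mKdV}}_\vk(t)q^0-\Phi^{\textup{mKdV}}(t)q^0$ is hopeless — is that the Lipschitz bound \eqref{rLip} carries a factor $\vk$: even after using the antisymmetry of $\dd$ and of $\dd\mathsf{R}_0(2\vk)$ to cancel the transport term and the linear part of the nonlinearity, the residual contribution to $\tfrac{d}{dt}\|r\|_{L^2}^2$ is only controlled by $\vk^2\|r\|_{L^2}^2$, with a constant depending on the conserved $L^2$-norms, so Gronwall is useless as $\vk\to\infty$.

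To bypass this I would argue by compactness on smooth data. Since $\Phi^{\textup{mKdV}}_\vk$ commutes with the entire mKdV hierarchy — the functionals $A(\vk')$, $\vk'\geq 1$, pairwise Poisson-commute, being generated by the same Lax operator $\L$ (see \cite{HGKV}) — it conserves the polynomial conservation laws controlling $\|q\|_{H^k(\T)}$, so for $q^0\in C^\infty(\T)$ the orbit $\{\Phi^{\textup{mKdV}}_\vk(t)q^0:\vk\geq 1,\ |t|\leq T\}$ is bounded in $H^k(\T)$ for every $k$, uniformly in $\vk$ and $|t|\leq T$. The consistency estimate then bounds $\|V_\vk(\Phi^{\textup{mKdV}}_\vk(t)q^0)\|_{L^2}$ uniformly in $\vk$, giving uniform Lipschitz continuity in $t$, while the uniform $H^1$-bound gives $L^2$-precompactness at each time by Rellich; Arzel\`a--Ascoli then makes the family precompact in $C([-T,T];L^2(\T))$. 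Any subsequential limit inherits the uniform $H^k$-bounds, hence is smooth, and solves \eqref{mkdv} in view of the consistency of the generators; by uniqueness of smooth solutions it equals $\Phi^{\textup{mKdV}}(\cdot)q^0$, so the whole family converges for smooth data. Finally I would pass to general $q^0\in L^2(\T)$ by a three-$\eps$ argument, using the density of $C^\infty(\T)$, the smooth-data convergence, the continuous dependence of $\Phi^{\textup{mKdV}}$, and the equicontinuity of $\{\Phi^{\textup{mKdV}}_\vk(t)\}_{\vk\geq 1,\,|t|\leq T}$ on bounded subsets of $L^2(\T)$, uniform in $\vk$, coming from the well-posedness theory in \cite{Forlano, KV}. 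The principal obstacle is exactly this final convergence at the critical $L^2$ regularity, where the derivative loss in the mKdV nonlinearity rules out a quantitative stability estimate and forces the qualitative compactness-plus-uniqueness route.
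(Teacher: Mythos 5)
Your local and global well-posedness and the $L^2$-conservation computation (pairing \eqref{Hkflow} with $q$ and using \eqref{Grderiv} to write $-4\vk^3\int q\,\dd g_-(\vk)\,dx=4\vk^4\int\dd\g(\vk)\,dx=0$) match the paper's sketch, and the Poisson-bracket argument for commutativity is fine in spirit. For the convergence \eqref{mkdvapprox} you genuinely depart from the paper, which follows \cite{Forlano, FKV}: there, commutativity reduces matters to showing the \emph{difference} flow tends to the identity, and this is read off from the evolution of $\g(\vk,\cdot)$ and $g_-(\vk,\cdot)$ together with the diffeomorphism $q=\tfrac{1}{4\vk}(4\vk^2-\dd^2)g_-(\vk)/(1+\g(\vk))$. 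Your alternative for smooth data (consistency of generators, compactness, uniqueness of smooth solutions) is a plausible scheme, but two real pieces are missing even there: (a) conservation of the hierarchy energies $\mathcal{E}_k$ under $\Phi^{\textup{mKdV}}_\vk$, which needs $\{A(\vk,\cdot),\mathcal{E}_k\}=0$ and must be proved or properly quoted; and (b) the consistency estimate itself, which does \emph{not} follow from Proposition~\ref{PROP:Gamma}: for the contribution $-16\vk^4\dd\mathsf{R}_0(2\vk)\bigl(q\,\g^{[\geq4]}(\vk)\bigr)$ to your $V_\vk(q)$, the bounds \eqref{gbd2} and \eqref{LinftyH1k} only give $\les \vk^3\|q\|_{L^2}\|\g^{[\geq4]}(\vk)\|_{L^\infty}\les \vk\,\|q\|_{L^2}^5$, which \emph{grows} in $\vk$. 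Obtaining decay requires redoing the resolvent expansion with $L^\infty$-based bounds in the perturbative regime $\vk\gg 1+\|q\|_{L^\infty}$; calling the generic bounds ``lossless for smooth $q$'' is not a proof.

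The decisive gap is the final step, for general $q^0\in L^2(\T)$. Your three-$\eps$ argument needs $\sup_{\vk\geq1}\sup_{|t|\leq T}\|\Phi^{\textup{mKdV}}_\vk(t)q^0-\Phi^{\textup{mKdV}}_\vk(t)q^0_n\|_{L^2}\to0$ as $q^0_n\to q^0$, i.e.\ equicontinuity of the family $\{\Phi^{\textup{mKdV}}_\vk(t)\}$ \emph{uniformly in} $\vk$. This is not an off-the-shelf fact: as you observe yourself, Gronwall with \eqref{rLip} produces Lipschitz constants that blow up with $\vk$, so nothing elementary gives it; on bounded sets it is not available at all (uniform equicontinuity of the family plus pointwise convergence would force $\Phi^{\textup{mKdV}}(t)$ to be uniformly continuous on bounded subsets of $L^2(\T)$, a property not known and expected to fail below $H^{1/2}(\T)$); and on compact sets, where it is true, it is established in \cite{KV, Forlano, FKV} precisely by the difference-flow machinery whose output is \eqref{mkdvapprox} itself. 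Your citation is therefore circular: if you may invoke that uniform stability, then \eqref{mkdvapprox} follows at once for every $q^0$ (this is exactly what the paper does by citing \cite{Forlano, FKV}) and your smooth-data compactness argument is redundant; if you may not, your proof does not close. Repairing it means proving a uniform-in-$\vk$ stability statement on compact sets — e.g.\ that the conserved quantities $A(\vk',\cdot)$ propagate $L^2$-equicontinuity of orbits and that $q$ is recovered from $(\g,g_-)$ — which is the very route you set out to avoid.
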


The proof of Proposition~\ref{P:HK well} can be found in \cite{FKV}, and in \cite{Forlano} for large~$\kk$. We briefly detail the arguments here.
Local well-posedness follows by the Cauchy-Lipschitz theorem using \eqref{rLip}, and global well-posedness in $L^2(\T)$ is a consequence of the conservation of the $L^2(\T)$-norm which can be proven by using \eqref{Grderiv}. Finally, the convergence of the flows \eqref{mkdvapprox} can be argued as in \cite{Forlano}, with the additional observation that we can recover $q$ from $g_{-}(\kk,q)$ and $\g(\kk,q)$ via the formula
\begin{align*}
	q=\tfrac{1}{4\kk}\tfrac{ (4\kk^2-\dd^2) g_{-}(\kk)}{1+\g(\kk)},
\end{align*}
where the maps $q\mapsto \tfrac{1}{4\kk}g_{-}(\kk,q)$ and $q\mapsto \g(\kk,q)$ are diffeomorphisms from $L^2(\T)$ to its range in $H^{2}(\T)$, and from $L^2(\T)$ to its range in $H^{1}(\T)$, respectively.

Lastly, we further expand $A(\kk, q)$ and establish some estimates on this quantity.
Using that $g_{-}(\kk,q)= 4\kk \mathsf{R}_{0}(2\kk)[q \g(\kk)+q]$ and $\tfrac{2}{2+\g}=1-\tfrac{\g}{2+\g}$, we have
\begin{align*}
	\rho(\kk,q) &= 2\kk q  \mathsf{R}_{0}(2\kk)q +\frac{2\kk}{2+\g(\kk)}\big\{ 2q \mathsf{R}_{0}(2\kk)[q \g(\kk)]-q \g(\kk)  \mathsf{R}_{0}(2\kk) q \big\} \\
	& =: \rho^{[2]}(\kk,q)+\rho^{[\geq 4]}(\kk,q).
\end{align*}
We see from \eqref{defA} that
\begin{align}
	A^{[2]}(\kk,q) = \int_{\T} \rho^{[2]}(\kk,q)dx \qquad \text{and} \qquad A^{[\geq 4]}(\kk,q) = \int_{\T} \rho^{[\geq 4]}(\kk,q)dx.
	\label{Adecomposition}
\end{align}
We need to expand $\rho^{[\geq 4]}(\kk, q)$ further. Proceeding as above, we have
\begin{align*}
	\rho^{[\geq 4]}(\kk,q)
	& = \kk \big\{ 2q \mathsf{R}_{0}(2\kk)[q \g^{[2]}(\kk)]-q \g^{[2]}(\kk)  \mathsf{R}_{0}(2\kk) q \big\} \\
	& \hphantom{X} + \bigg\{ \frac{2\kk}{2+\g(\kk)}\big\{ 2q \mathsf{R}_{0}(2\kk)[q \g^{[\geq 4]}(\kk)]-q \g^{[\geq 4]}(\kk)  \mathsf{R}_{0}(2\kk) q \big\} \\
	& \hphantom{XXXX} -\frac{\kk \g(\kk)}{2+\g(\kk)}\big\{ 2q \mathsf{R}_{0}(2\kk)[q \g^{[2]}(\kk)]-q \g^{[2]}(\kk)  \mathsf{R}_{0}(2\kk) q \big\}\bigg\} \\
	&=: \rho^{[4]}(\kk,q)+\rho^{[\geq 6]}(\kk,q).
\end{align*}
Since $\mathsf{R}_{0}(2\kk)$ is symmetric, upon integration and using \eqref{G2}, we obtain
\begin{align}
	A^{[4]}(\kk, q) &:= \int_{\T} \rho^{[4]}(\kk,q)dx
	 = -2  \int_\T \kk q \mathsf{R_0}(2\kk) \big[ q \cdot \tfrac{q}{2\kk - \partial} \cdot \tfrac{q}{2\kk+ \partial}\big] \, dx.
	\label{A4}
\end{align}
It will be useful later to think of \eqref{A4} as defining a multilinear operator. In this vein, Cauchy-Schwarz inequality implies the following estimate.

\begin{lemma}
	Given $\kk\geq \tfrac{1}{2}$, it holds that
	\begin{align}
		\bigg| \int_{\T} \kk  q_1 \mathsf{R}_{0}(2\kk)[ q_2 \cdot  \tfrac{q_3}{2\kk-\dd} \cdot \tfrac{q_4}{2\kk+\dd}]  dx\bigg| & \les \kk^{-1} \|q_1\|_{L^2}\|q_2\|_{L^2} \| \tfrac{q_3}{2\kk-\dd}\|_{L^\infty}\| \tfrac{q_4}{2\kk+\dd}\|_{L^{\infty}}, \label{A4general}
	\end{align}
	for any $q_j\in L^{2}(\T)$, $j=1,\ldots, 4$.

\end{lemma}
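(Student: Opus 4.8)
The plan is to estimate the multilinear expression directly by moving the smoothing operator $\mathsf{R}_0(2\vk)$ onto one of the factors and then applying Cauchy-Schwarz twice, pairing the two genuine $L^2$ factors $q_1, q_2$ against each other while treating the two resolvent factors $\tfrac{q_3}{2\vk-\dd}$ and $\tfrac{q_4}{2\vk+\dd}$ in $L^\infty$. Concretely, I would first write the integral as $\int_\T q_1 \cdot \vk\,\mathsf{R}_0(2\vk)\big[q_2 \cdot \tfrac{q_3}{2\vk-\dd}\cdot\tfrac{q_4}{2\vk+\dd}\big]\,dx$ and pull the two $L^\infty$ factors out of the bracket, so that
\begin{align*}
	\bigg| \int_{\T} \vk  q_1 \mathsf{R}_{0}(2\vk)[ q_2 \cdot  \tfrac{q_3}{2\vk-\dd} \cdot \tfrac{q_4}{2\vk+\dd}]  dx\bigg| \leq \vk \Big\| \tfrac{q_3}{2\vk-\dd}\Big\|_{L^\infty}\Big\| \tfrac{q_4}{2\vk+\dd}\Big\|_{L^{\infty}}  \big| \textstyle\int_\T q_1 \,\mathsf{R}_{0}(2\vk) q_2 \, dx\big|.
\end{align*}

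The remaining step is to control the quadratic pairing $\big|\int_\T q_1\,\mathsf{R}_0(2\vk)q_2\,dx\big|$ and to extract the full factor $\vk^{-2}$ so that, after multiplication by the prefactor $\vk$, we are left with $\vk^{-1}$. Here I would pass to Fourier variables: since $\mathsf{R}_0(2\vk)$ acts on periodic functions as the Fourier multiplier $(4\vk^2+\xi^2)^{-1}$ by the remark preceding \eqref{A4}, Plancherel gives
\begin{align*}
	\Big| \int_\T q_1\,\mathsf{R}_0(2\vk)q_2\,dx\Big| = \Big| \sum_{\xi\in 2\pi\Z} \frac{\ft{q_1}(\xi)\,\cj{\ft{q_2}(\xi)}}{4\vk^2+\xi^2}\Big| \leq \vk^{-2}\sum_{\xi\in 2\pi\Z} |\ft{q_1}(\xi)||\ft{q_2}(\xi)| \leq \vk^{-2}\|q_1\|_{L^2}\|q_2\|_{L^2},
\end{align*}
where the first inequality uses the uniform bound $(4\vk^2+\xi^2)^{-1}\leq (4\vk^2)^{-1}\leq \vk^{-2}$ and the last inequality is Cauchy-Schwarz in $\xi$ together with Plancherel. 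Combining this with the display above yields exactly the claimed bound \eqref{A4general}.

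There is no serious obstacle here: the estimate is a routine application of Cauchy-Schwarz and Plancherel, which is why the statement is phrased as an immediate consequence of the Cauchy-Schwarz inequality. The only point requiring a little care is the legitimacy of treating $\mathsf{R}_0(2\vk)$ as the periodic Fourier multiplier $(4\vk^2+\xi^2)^{-1}$ and of moving the two $L^\infty$ factors out of the bracket; both are justified by the convention established in the remark before the lemma, namely that although these operators are defined through their action on $L^2(\R)$, they coincide with the corresponding periodic Fourier multipliers whenever they act on periodic functions. I would also note that the symmetry of $\mathsf{R}_0(2\vk)$, already used to derive \eqref{A4}, ensures the pairing $\int_\T q_1 \mathsf{R}_0(2\vk) q_2\,dx$ is symmetric in $q_1,q_2$, so there is no loss of generality in which factor absorbs the operator.
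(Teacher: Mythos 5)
Your overall strategy---reduce to a quadratic pairing and then use Plancherel, Cauchy--Schwarz, and the multiplier bound $(4\vk^2+\xi^2)^{-1}\leq\vk^{-2}$---is exactly what the paper intends (it gives no separate proof, presenting the lemma as an immediate consequence of Cauchy--Schwarz). However, your first display is false as written. Because $\mathsf{R}_0(2\vk)$ is a nonlocal operator, you cannot ``pull the two $L^\infty$ factors out of the bracket'' and keep the \emph{signed} functions $q_1,q_2$ in the remaining pairing: the proposed right-hand side $\vk\,\|\tfrac{q_3}{2\vk-\dd}\|_{L^\infty}\|\tfrac{q_4}{2\vk+\dd}\|_{L^\infty}\,\big|\int_\T q_1\,\mathsf{R}_0(2\vk)q_2\,dx\big|$ can vanish by cancellation while the left-hand side does not. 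For instance, with $q_1=\cos(2\pi x)$ and $q_2=\sin(2\pi x)$ one has $\int_\T q_1\,\mathsf{R}_0(2\vk)q_2\,dx=0$, since $\mathsf{R}_0(2\vk)$ is a Fourier multiplier and hence preserves the span of $\sin(2\pi x)$, which is orthogonal to $q_1$; yet choosing $q_3$ constant and $q_4$ proportional to $\sin(4\pi x)$ makes the product $q_2\cdot\tfrac{q_3}{2\vk-\dd}\cdot\tfrac{q_4}{2\vk+\dd}$ acquire a nonzero $\cos(2\pi x)$ mode, so the left-hand side is nonzero.

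The repair is immediate, in either of two ways. (a) Use that the integral kernel of $\mathsf{R}_0(2\vk)$ is pointwise positive (it is the periodization of $\tfrac{1}{4\vk}e^{-2\vk|x-y|}$), which legitimately gives
\begin{align*}
\bigg|\int_\T \vk\, q_1\, \mathsf{R}_0(2\vk)\big[q_2\cdot\tfrac{q_3}{2\vk-\dd}\cdot\tfrac{q_4}{2\vk+\dd}\big]\,dx\bigg| \leq \vk\, \big\|\tfrac{q_3}{2\vk-\dd}\big\|_{L^\infty}\big\|\tfrac{q_4}{2\vk+\dd}\big\|_{L^\infty}\int_\T |q_1|\, \mathsf{R}_0(2\vk)\big[|q_2|\big]\,dx ;
\end{align*}
your Plancherel/Cauchy--Schwarz step then applies verbatim with $|q_1|,|q_2|$ in place of $q_1,q_2$, and $\||q_j|\|_{L^2}=\|q_j\|_{L^2}$ finishes the proof. (b) Even simpler, reverse the order of operations: apply Cauchy--Schwarz in $x$ first,
\begin{align*}
\bigg|\int_\T \vk\, q_1\, \mathsf{R}_0(2\vk)\big[q_2\cdot\tfrac{q_3}{2\vk-\dd}\cdot\tfrac{q_4}{2\vk+\dd}\big]\,dx\bigg| \leq \vk\,\|q_1\|_{L^2}\,\big\|\mathsf{R}_0(2\vk)\big[q_2\cdot\tfrac{q_3}{2\vk-\dd}\cdot\tfrac{q_4}{2\vk+\dd}\big]\big\|_{L^2},
\end{align*}
then use the operator norm bound $\|\mathsf{R}_0(2\vk)\|_{L^2\to L^2}=\sup_\xi(4\vk^2+\xi^2)^{-1}\leq\tfrac14\vk^{-2}$ (Plancherel) followed by H\"older's inequality $\|q_2 fg\|_{L^2}\leq\|q_2\|_{L^2}\|f\|_{L^\infty}\|g\|_{L^\infty}$. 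Either route yields \eqref{A4general} with the correct power $\vk\cdot\vk^{-2}=\vk^{-1}$; the rest of your write-up (the identification of $\mathsf{R}_0(2\vk)$ with the periodic multiplier $(4\vk^2+\xi^2)^{-1}$ and the symmetry remark) is correct and unaffected.
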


We also define
\begin{align}
	A^{[\geq 6]}(\kk, q):= \int_{\T} \rho^{[\geq 6]}(\kk,q)dx, \label{Ageq6}
\end{align}
which we further write out as
$A^{[\geq 6]}(\kk, q) = A^{[\geq 6]}_{1}(\kk, q)+A^{[\geq 6]}_{2}(\kk, q),
$
where
\begin{align}
	A^{[\geq 6]}_{1}(\kk, q)& =-\int_{\T}\frac{\kk \g(\kk)}{2+\g(\kk)}\big\{ 2q \mathsf{R}_{0}(2\kk)[q \g^{[2]}(\kk)]-q \g^{[2]}(\kk)  \mathsf{R}_{0}(2\kk) q \big\}dx, \label{A61-def}\\
	A^{[\geq 6]}_{2}(\kk, q)& =\int_{\T} \frac{2\kk}{2+\g(\kk)}\big\{ 2q \mathsf{R}_{0}(2\kk)[q \g^{[\geq 4]}(\kk)]-q \g^{[\geq 4]}(\kk)  \mathsf{R}_{0}(2\kk) q \big\}dx. \label{A62-def}
\end{align}
We have acceptable bounds on each of the terms \eqref{A61-def} and \eqref{A62-def}.

\begin{lemma} \label{LEM:Ageq6}
	Given $\kk\geq \tfrac{1}{2}$ and $q\in L^{2}(\T)$, it holds that
	\begin{align*}
		|A^{[\geq 6]}_{1}(\kk, q)|  & \les \kk^{-2}\|q\|_{L^{2}}^{4}(1+  \|q\|_{L^2}^{4}) \|\tfrac{q}{2\kk -\dd}\|_{L^\infty}\|\tfrac{q}{2\kk +\dd}\|_{L^{\infty}}, \\
		|A^{[\geq 6]}_{2}(\kk, q)| & \les \kk^{-3} \|q\|^6_{L^2} ( 1+ \|q\|^2_{L^2}).
	\end{align*}
\end{lemma}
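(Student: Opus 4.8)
The plan is to reduce both estimates to a single bilinear bound and then feed in the pointwise and $L^2$ control of $\g$, $\g^{[2]}$, $\g^{[\geq 4]}$ and of $(2+\g)^{-1}$. The basic building block is the following elementary estimate: for any $G_1,G_2\in L^2(\T)$ and $\vk\geq 1$,
\[
\bigg|\int_\T \vk\, G_1\,\mathsf{R}_0(2\vk)G_2\,dx\bigg|\les \vk^{-1}\|G_1\|_{L^2}\|G_2\|_{L^2}.
\]
This follows immediately from Plancherel and the symbol bound $(4\vk^2+\xi^2)^{-1}\leq (4\vk^2)^{-1}$, so that the prefactor $\vk$ is beaten by the $\vk^{-2}$ gained from $\mathsf{R}_0(2\vk)$; alternatively one may invoke the Cauchy--Schwarz argument behind \eqref{A4general}. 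I will also use three pointwise ingredients. The positivity \eqref{gammap1} gives $2+\g(\vk)>1$, hence $\|(2+\g(\vk))^{-1}\|_{L^\infty}\leq 1$. Combining \eqref{LinftyH1k} with \eqref{gbd} and \eqref{gbd2} gives
\[
\|\g(\vk)\|_{L^\infty}\les \vk^{-\frac12}\|\g(\vk)\|_{H^1_\vk}\les \vk^{-1}\|q\|_{L^2}^2\bigl(1+\vk^{-2}\|q\|_{L^2}^4\bigr),
\]
\[
\|\g^{[\geq 4]}(\vk)\|_{L^\infty}\les \vk^{-\frac12}\|\g^{[\geq 4]}(\vk)\|_{H^1_\vk}\les \vk^{-2}\|q\|_{L^2}^4\bigl(1+\vk^{-1}\|q\|_{L^2}^2\bigr),
\]
while \eqref{G2} yields directly $\|\g^{[2]}(\vk)\|_{L^\infty}\les \|\tfrac{q}{2\vk-\dd}\|_{L^\infty}\|\tfrac{q}{2\vk+\dd}\|_{L^\infty}$.

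For $A^{[\geq 6]}_1$ in \eqref{A61-def}, I observe that its integrand is already a sum of two terms of the form $\vk\,G_1\,\mathsf{R}_0(2\vk)G_2$: in the first, $G_1=\tfrac{\g}{2+\g}q$ and $G_2=q\g^{[2]}$; in the second, $G_1=\tfrac{\g}{2+\g}q\g^{[2]}$ and $G_2=q$ (no use of symmetry is needed, as $\mathsf{R}_0(2\vk)$ already acts on $q$). In both cases I estimate $\|G_i\|_{L^2}$ by placing $\tfrac{\g}{2+\g}$ and $\g^{[2]}$ in $L^\infty$ and $q$ in $L^2$, using $\|\tfrac{\g}{2+\g}\|_{L^\infty}\leq \|\g\|_{L^\infty}$. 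The bilinear bound then produces
\[
\les \vk^{-1}\cdot \vk^{-1}\|q\|_{L^2}^3\bigl(1+\vk^{-2}\|q\|_{L^2}^4\bigr)\cdot \|q\|_{L^2}\,\|\tfrac{q}{2\vk-\dd}\|_{L^\infty}\|\tfrac{q}{2\vk+\dd}\|_{L^\infty},
\]
and absorbing $\vk^{-2}\|q\|_{L^2}^4\leq \|q\|_{L^2}^4$ (valid since $\vk\geq 1$) reproduces the claimed bound $\vk^{-2}\|q\|_{L^2}^4(1+\|q\|_{L^2}^4)\|\tfrac{q}{2\vk-\dd}\|_{L^\infty}\|\tfrac{q}{2\vk+\dd}\|_{L^\infty}$.

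For $A^{[\geq 6]}_2$ in \eqref{A62-def} the argument is identical, except that now $\tfrac{1}{2+\g}$ is placed in $L^\infty$ (bounded by $1$) and $\g^{[\geq 4]}$ carries all the decay. Thus for the first term $G_1=\tfrac{1}{2+\g}q$, $G_2=q\g^{[\geq 4]}$, and for the second $G_1=\tfrac{1}{2+\g}q\g^{[\geq 4]}$, $G_2=q$; using $\|G_1\|_{L^2}\leq\|q\|_{L^2}$ (resp.\ $\leq \|\g^{[\geq 4]}\|_{L^\infty}\|q\|_{L^2}$) and $\|q\g^{[\geq 4]}\|_{L^2}\leq \|\g^{[\geq 4]}\|_{L^\infty}\|q\|_{L^2}$, both terms are bounded by $\les \vk^{-1}\|q\|_{L^2}^2\,\|\g^{[\geq 4]}\|_{L^\infty}\les \vk^{-3}\|q\|_{L^2}^6(1+\|q\|_{L^2}^2)$, as claimed. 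I do not foresee any serious analytic difficulty here: the uniform-in-$\vk$ estimates \eqref{gbd}, \eqref{gbd2} and the positivity \eqref{gammap1} do the heavy lifting, and the only genuine care is bookkeeping—distributing the norms so that for $A^{[\geq 6]}_1$ the $L^\infty$ norms land precisely on $\tfrac{q}{2\vk\pm\dd}$ (keeping $\g^{[2]}$ out of $L^2$), and verifying that the $\vk$-prefactor in the definitions, the $\vk^{-2}$ from $\mathsf{R}_0(2\vk)$, and the powers of $\vk$ from the $\g$-estimates combine to exactly $\vk^{-2}$ and $\vk^{-3}$.
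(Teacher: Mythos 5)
Your proposal is correct and follows essentially the same route as the paper's proof: both rest on Cauchy--Schwarz together with the operator bound $\|\mathsf{R}_0(2\vk)\|_{L^2\to L^2}\les \vk^{-2}$, the bound $\|(2+\g(\vk))^{-1}\|_{L^\infty}\leq 1$ from \eqref{gammap1}, and the $L^\infty$ control of $\g$, $\g^{[2]}$, $\g^{[\geq 4]}$ via \eqref{LinftyH1k}, \eqref{gbd}, \eqref{G2}, and \eqref{gbd2}. The only cosmetic difference is that the paper first pulls $\vk\g/(2+\g)$ out in $L^\infty$ and bounds the bracket in $L^1$, whereas you absorb that factor into $G_1$ and invoke a packaged bilinear estimate; the powers of $\vk$ and of $\|q\|_{L^2}$ come out identically.
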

\begin{proof}
	It follows from \eqref{gammap1} that
	\begin{align}
		\big\| \tfrac{1}{2+\g(\kk)}\big\|_{L^{\infty}} \leq 1. \label{2+g}
	\end{align}
	By \eqref{2+g}, \eqref{LinftyH1k}, and \eqref{gbd}
	\begin{align*}
		|A^{[\geq 6]}_{1}(\kk, q)| &\les \kk \|\g(\kk)\|_{L^{\infty}}\big\{  \| q \mathsf{R}_{0}(2\kk)[q \g^{[2]}(\kk)]\|_{L^{1}}+\|q \g^{[2]}(\kk)  \mathsf{R}_{0}(2\kk) q\|_{L^1} \big\} \\
		& \les (\|q\|_{L^2}^{2}+\kk^{-2}\|q\|_{L^2}^{6})  \big( \kk^{-2}\|q\|_{L^2}^{2} \|\tfrac{q}{2\kk -\dd}\|_{L^\infty}\|\tfrac{q}{2\kk +\dd}\|_{L^{\infty}}\big) \\
		& \les \kk^{-2}\|q\|_{L^{2}}^{4}(1+  \|q\|_{L^2}^{4}) \|\tfrac{q}{2\kk -\dd}\|_{L^\infty}\|\tfrac{q}{2\kk +\dd}\|_{L^{\infty}}
	\end{align*}
	Also, by \eqref{2+g}, \eqref{LinftyH1k}, \eqref{gbd2},
	\begin{align*}
		|A^{[\geq 6]}_{2}(\kk, q)| & \les \kk \|q \mathsf{R}_{0}(2\kk)[q \g^{[\geq 4]}(\kk)]\|_{L^1}+\kk \|q \g^{[\geq 4]}(\kk)  \mathsf{R}_{0}(2\kk) q \|_{L^1} \\
		& \les \kk^{-1} \|q\|_{L^2}^{2}\|\g^{[\geq 4]}(\kk)\|_{L^{\infty}} \\
		& \les \kk^{-3} \|q\|^6_{L^2} ( 1+ \|q\|^2_{L^2}).
	\end{align*}
	This completes the proof of the lemma.
\end{proof}

\section{Construction of the measures}\label{SEC:construct}

We start by elaborating on our choice of candidate invariant measure $\rho_{s,R}$ and base Gaussian measure $\mu_s$.
By defining the operator $w(-i\partial, \kk):= 4\kk^2 \mathsf{R_0}(2\kk) - \kk^2 \mathsf{R_0} (\kk)$, which has Fourier multiplier given by \eqref{wmult}, we can rewrite the quadratic-in-$q$ terms in $\mathcal{E}_s(q)$ from \eqref{Adecomposition} as follows
\begin{align*}
	\int_1^\infty \kk^{2s} \big[ A^{[2]}(\kk, q) - \tfrac12 A^{[2]}(\tfrac\kk2,q) \big] \, d\kk & = \tfrac12 \int_1^\infty \kk^{2s-1} \jb{q, w(-i\partial,\kk) q} \, d\kk = \tfrac12 \|\m_s(-i \partial) q\|^2_{L^2(\T)},
\end{align*}
	where the Fourier multiplier operator $\m_s(-i\partial)$ has corresponding multiplier given by
	 \eqref{wmult}. Now, we define the higher order in $q$ term $\mathcal{V}(\kk,q)$ in \eqref{Wkk} by
	\begin{align}
		\mathcal{V}(\kk,q) := A^{[\geq 4]}(\kk,q) -\tfrac{1}{2} A^{[\geq 4]}(\tfrac{\kk}{2},q), \label{EEdefn}
	\end{align}
	where $A^{[\geq 4]}(\kk, q)$ is as in \eqref{Adecomposition}.

\begin{lemma}\label{LEM:ms} Let $0\leq s<1$. Then, there exist constants $C>c>0$, independent of $\kk$, such that
\begin{align}
c\jb{\xi}^{2s} \leq \jb{\m_{s}(\xi)}^2\leq C\jb{\xi}^{2s} \label{mbd}
\end{align}
for all $\xi\in2\pi\Z$.
Hence, $\|\jb{ \m_{s}(-i\dd)}q\|_{L^{2}(\T)} \sim \| q\|_{H^{s}(\T)}$. Moreover, there exists a constant $C_{s}>0$, depending only on $s$, such that
\begin{align}
 \big| C_{s} |\xi|^{2s}-\mathfrak{m}_{s}(\xi)^2 \big| \les |\xi|^{-2} \label{Kakutanidiff}
\end{align}
for all $\xi\in 2\pi \Z$, $\xi\neq 0$.
\end{lemma}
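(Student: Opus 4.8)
The plan is to reduce all three claims to understanding a single scalar profile obtained by rescaling the $\kk$-integral. Fixing $\xi\neq0$ and substituting $\kk=|\xi|t$ in \eqref{wmult}, the homogeneity of the weight $w$ collapses the $\xi$-dependence of the integrand: one computes $w(\xi,|\xi|t)=\tfrac{3t^2}{(t^2+1)(4t^2+1)}$, $\kk^{2s-1}\,d\kk=|\xi|^{2s}t^{2s-1}\,dt$, and the lower endpoint becomes $t=1/|\xi|$, so that
\begin{equation*}
	\m_s(\xi)^2=|\xi|^{2s}\,F(1/|\xi|),\qquad F(a):=\int_a^\infty \frac{3\,t^{2s+1}}{(t^2+1)(4t^2+1)}\,dt.
\end{equation*}
First I would record that $F(a)$ is finite for every $a\geq0$ precisely because $0\leq s<1$: near $t=0$ the integrand behaves like $3t^{2s+1}$ (integrable since $2s+1>-1$), while near $t=\infty$ it behaves like $\tfrac34 t^{2s-3}$, which is integrable exactly when $s<1$. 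In particular $C_s:=F(0)$ is a finite positive constant depending only on $s$, and it is natural to identify it with the $C_s$ of the statement (and of \eqref{mutilde}).

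\textbf{Two-sided bound and norm equivalence.} Since the integrand is positive, $F$ is continuous and strictly decreasing. Hence for nonzero $\xi\in2\pi\Z$ (so $|\xi|\geq 2\pi$ and $a=1/|\xi|\in(0,\tfrac1{2\pi}]$) we have $0<F(\tfrac1{2\pi})\leq F(1/|\xi|)\leq C_s$, giving $\m_s(\xi)^2\sim|\xi|^{2s}$ with constants independent of $\kk$ and $\xi$. Because $|\xi|\geq 2\pi$ forces $\jb{\xi}^{2s}\sim|\xi|^{2s}\ges 1$, the additive $1$ in $\jb{\m_s(\xi)}^2=1+\m_s(\xi)^2$ is harmless in both directions, and \eqref{mbd} follows. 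For $\xi=0$ the weight vanishes, $w(0,\kk)=0$, so $\m_s(0)=0$ and both sides of \eqref{mbd} equal $1$. Multiplying by $|\ft q(\xi)|^2$ and summing over $\xi$ then yields $\|\jb{\m_s(-i\dd)}q\|_{L^2}\sim\|q\|_{H^s}$ by Plancherel.

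\textbf{Refined asymptotic.} For \eqref{Kakutanidiff} I would write the difference as a tail of the profile integral:
\begin{equation*}
	C_s|\xi|^{2s}-\m_s(\xi)^2=|\xi|^{2s}\big(F(0)-F(1/|\xi|)\big)=|\xi|^{2s}\int_0^{1/|\xi|}\frac{3\,t^{2s+1}}{(t^2+1)(4t^2+1)}\,dt.
\end{equation*}
This quantity is non-negative, and bounding the integrand crudely by $3t^{2s+1}$ (using $(t^2+1)(4t^2+1)\geq1$) gives $\int_0^{1/|\xi|}3t^{2s+1}\,dt=\tfrac{3}{2s+2}|\xi|^{-(2s+2)}$; multiplying by $|\xi|^{2s}$ produces the claimed bound $\les|\xi|^{-2}$.

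\textbf{Main point.} The computation is elementary; the only genuine constraint is the convergence of $F(0)$ at $t=\infty$, which is exactly the hypothesis $s<1$ and explains why the construction is tied to this regularity window. I expect no serious obstacle beyond bookkeeping the scaling identity and the endpoint $\xi=0$. The $|\xi|^{-2}$ decay in \eqref{Kakutanidiff} is summable, which is what will subsequently permit a Kakutani-type argument identifying $\mu_s$ with the Gaussian $\wt\mu_s$ built from the pure Sobolev weight $\jb{\xi}^{2s}$.
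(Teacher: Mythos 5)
Your proposal is correct and follows essentially the same route as the paper: the identity $\m_s(\xi)^2 = 3|\xi|^{2s}\int_{1/|\xi|}^\infty \kk^{2s+1}(1+\kk^2)^{-1}(1+4\kk^2)^{-1}\,d\kk$ obtained by rescaling, the two-sided bound from convergence of the profile integral (requiring $s<1$), the choice $C_s$ as the full integral from $0$ to $\infty$, and the bound $|\xi|^{2s}\int_0^{1/|\xi|}3\kk^{2s+1}\,d\kk \les |\xi|^{-2}$ for \eqref{Kakutanidiff} are all exactly the paper's steps. Your explicit treatment of the endpoint $\xi=0$ and the monotonicity of the profile $F$ are minor additions of bookkeeping, not a different method.
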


\begin{proof}
Fix $\xi\in 2\pi \Z$ such that $\xi \neq 0$. From \eqref{wmult}, and a change of variables, we get
\begin{align*}
\mathfrak{m}_{s}(\xi)^2 & = 3 |\xi|^{2s} \int_{\frac{1}{|\xi|}}^{\infty} \frac{ \kk^{2s+1}}{(1+\kk^2)(1+4\kk^2)}d\kk  \sim  |\xi|^{2s}  \bigg( \int_{\frac{1}{|\xi|}}^{1} \kk^{2s+1}d\kk + \int_{1}^{\infty} \kk^{2s-3}d\kk  \bigg)   \sim |\xi|^{2s},
\end{align*}
as the second integral above converges as long as $s<1$. Then, to complete \eqref{mbd}, we just note that $1+|\xi|^{2s} \sim \jb{\xi}^{2s}$ as $s\geq 0$.
For \eqref{Kakutanidiff}, we choose
\begin{align}
C_{s} = 3\int_{0}^{\infty} \frac{ \kk^{2s+1}}{(1+\kk^2)(1+4\kk^2)} \, d\kk, \label{Cs}
\end{align}
which, as we noted above, converges for $-1< s<1$.
Then, we have
\begin{align*}
 \big| C_{s} |\xi|^{2s}-\mathfrak{m}_{s}(\xi)^2 \big| \les |\xi|^{2s} \int_{0}^{\frac{1}{|\xi|}} \kk^{1+2s} d\kk \les |\xi|^{-2},
\end{align*}
which yields \eqref{Kakutanidiff}.
\end{proof}

\subsection{Construction of the measures}
In this section, we construct the candidate invariant measures $\rho_{s, R}$ for every $\tfrac{1}{2}<s<1$ and $R>0$:
\begin{align}
d\rho_{s, R} =Z_{s,R}^{-1}\ind_{\{ \|q\|^2_{L^2} \leq R  \}}e^{-E(q)} d\mu_{s} = Z^{-1}_{s, R} F(q) \, d\mu_s, \label{Gibbs}
\end{align}
where
\begin{align}
E(q)= \int_{1}^{\infty} \kk^{2s}\mathcal{V}(\kk,q)d\kk \quad \text{and} \quad  F(q) = \ind_{\{ \|q\|_{L^2}^{2} \leq R  \}}e^{-E(q)}. \label{Edefn}
\end{align}
In order to make our computations rigorous, we need to introduce some truncations. Given $N\in \NB$, we let $\pi_{N}$ denote the projection onto frequencies $\{ \xi\in 2\pi\Z \, :\, |\xi|\leq 2\pi N\}$,~i.e.,
\begin{align*}
\pi_{N} f(x) = \sum_{|\xi|\leq 2\pi N} \ft f(n) e^{i\xi x}.
\end{align*}
We also let $\pi_{>N}:= \Id - \pi_N$ and for $N=\infty$ we define $\pi_{\infty} = \Id$.
Then, for $N\in \NB$, and $L>1$, we define
\begin{align}
F_{N,L}(q) & =  \ind_{\{ \|q\|_{L^2}^{2} \leq R  \}} e^{ -E_{L}(\pi_{N}q)} \qquad \text{where} \qquad
 E_{L}(\pi_{N} q)=  \int_{1}^{L} \kk^{2s}\mathcal{V}(\kk ,\pi_{N}q)d\kk. \label{EMJ}
\end{align}

Our goal is to prove the following result which says that the density $F(q)$ is integrable with respect to $\mu_{s}$, and hence the measures $\rho_{s,R}$ exist as probability measures.

\begin{proposition}\label{PROP:Finteq}
Fix $\tfrac{1}{2}<s<1$, $R>0$, and $1\leq p<\infty$. Then, there exists $C(p,R,s)>0$ such that
\begin{align*}
\| F(q)\|_{L^{p}(d\mu_{s})}\leq C(p,R,s)<\infty.
\end{align*}
In particular, the partition function $Z_{s,R}:=\E_{\mu_{s}}[F(q)]$ is finite.
\end{proposition}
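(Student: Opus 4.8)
The plan is to establish the $L^p(\mu_s)$ bound through the Bou\'e--Dupuis variational formula, following the Barashkov--Gubinelli strategy alluded to in the introduction. Since $F(q)^p = \ind_{\{\|q\|_{L^2}^2\leq R\}}e^{-pE(q)}$, it suffices to bound $\E_{\mu_s}[\ind_{\{\|q\|_{L^2}^2\leq R\}}e^{-pE(q)}]$ from above, equivalently to bound minus its logarithm from below. I would first work with the truncated densities $F_{N,L}$ from \eqref{EMJ}, where the relevant integrals are finite and the variational formula applies cleanly, derive bounds uniform in $N$ and $L$, and then recover the claim for $F$ using the Lipschitz estimates \eqref{g lip}--\eqref{rLip} (which yield $E_L(\pi_N q)\to E(q)$ almost surely) together with Fatou's lemma.

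The analytic heart of the argument is a pair of deterministic estimates on $E_L(\pi_N q) = \int_1^L\kk^{2s}\mathcal{V}(\kk,\pi_N q)\,d\kk$, uniform in $N,L$, and expressed only in terms of $\|q\|_{L^2}$ and $\|q\|_{H^\sigma}$ for some $0<\sigma<s-\tfrac12$, this being the regularity available on $\supp\mu_s$. Splitting $\mathcal{V} = \mathcal{V}^{[4]} + \mathcal{V}^{[\geq6]}$ according to \eqref{Adecomposition}, the sextic-and-higher part is the easy one: by Lemma~\ref{LEM:Ageq6} and the second estimate of Lemma~\ref{LEM:interp}, $\kk^{2s}|A^{[\geq 6]}_j(\kk,q)|$ decays like $\kk^{2s-3}$ (with further $\kk^{-2\sigma}$ gains), which is integrable since $s<1$, producing a bound of the form $\|q\|_{L^2}^a\|q\|_{H^\sigma}^b$ with $a\geq4$ and $b\leq2$. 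The quartic part is the main obstacle. The naive per-$\kk$ bound from \eqref{A4general} and Lemma~\ref{LEM:interp} gives a $\kk$-integrand of size $\kk^{2s-2-2\sigma}$, which fails to be integrable precisely when $\sigma<s-\tfrac12$. To recover the estimate I would instead perform the $\kk$-integration first, studying the multilinear Fourier symbol $M(\xi_1,\xi_3,\xi_4) = \int_1^\infty\kk^{2s}[m_\kk-\tfrac12 m_{\kk/2}]\,d\kk$ of $\int_1^\infty\kk^{2s}\mathcal{V}^{[4]}(\kk,q)\,d\kk$ built from \eqref{A4}, where $m_\kk$ denotes the symbol of $A^{[4]}(\kk)$. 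A scaling computation shows that $M$ is homogeneous of degree $2s-2$, and exploiting the smoothing of the resolvents $\mathsf{R}_0(2\kk)$ and $(2\kk\pm\partial)^{-1}$ one should obtain the sharp multilinear bound $\bigl|\int_1^\infty\kk^{2s}\mathcal{V}^{[4]}(\kk,q)\,d\kk\bigr|\les\|q\|_{L^2}^2\|q\|_{H^\sigma}^2$ for every $\sigma<s-\tfrac12$, with two $L^2$-slots (the bare factors of $q$) and two $H^\sigma$-slots (the resolved factors). It is this point, where the explicit integrable structure of $A^{[4]}$ must be used in place of the lossy per-$\kk$ bounds, that I expect to be the main difficulty.

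With these estimates in hand I would apply the variational formula. Writing the generic field as $q = W + \Upsilon$, where $W$ is the Gaussian part and $\Upsilon$ the drift, the indicator forces $\|W+\Upsilon\|_{L^2}^2\leq R$ and hence $\|\Upsilon\|_{L^2}\les\sqrt R + \|W\|_{L^2}$, while the control (drift energy) term dominates $\|\Upsilon\|_{H^s}^2$, the Cameron--Martin norm of $\mu_s$. The terms of $pE(W+\Upsilon)$ that are pure in $\Upsilon$ are then handled by combining the mass bound $\|\Upsilon\|_{L^2}\leq C(R)$ with the interpolation $\|\Upsilon\|_{H^\sigma}\leq\|\Upsilon\|_{L^2}^{1-\sigma/s}\|\Upsilon\|_{H^s}^{\sigma/s}$: since $\sigma<s$ and the $H^\sigma$-slots enter to a power at most $2$, this yields a strictly subquadratic power of $\|\Upsilon\|_{H^s}$, which Young's inequality absorbs into the drift energy for any $p$. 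The mixed and pure-$W$ terms involve only $\|W\|_{L^2}$ and $\|W\|_{H^\sigma}$, whose moments are finite by Gaussianity (here $s>\tfrac12$ guarantees $W\in L^2$ almost surely) --- this is the ``linear level'' use of the randomness. Collecting these bounds gives $-\log\E_{\mu_s}[F_{N,L}^p]\geq -C(p,R,s)$ uniformly in $N,L$, and the proposition follows upon passing to the limit.
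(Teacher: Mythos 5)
Your overall scaffolding --- truncate in $N$ and $L$, apply the Bou\'e--Dupuis formula, prove bounds uniform in the truncation parameters, then pass to the limit --- is exactly the paper's (Proposition~\ref{PROP:FMJL} followed by Lemmas~\ref{LEM:EJM}, \ref{LEM:FMJ} and \ref{LEM:EMconv}), and your treatment of the drift (interpolating between the $L^2$ mass cutoff and the $H^s$ drift energy so that only a subquadratic power of the Cameron--Martin norm remains, to be absorbed by Young's inequality) is the same mechanism as the paper's choice of $\ta\in(2s-1,1)$ in the proof of Proposition~\ref{PROP:FMJL}. The genuine gap is the step you yourself defer as ``the main difficulty'': the claimed multilinear bound
\begin{align*}
\Bigl|\int_1^\infty\kk^{2s}\,\mathcal{V}^{[4]}(\kk,q)\,d\kk\Bigr|\les\|q\|_{L^2}^2\|q\|_{H^\s}^2,\qquad 0<\s<s-\tfrac12,
\end{align*}
is not proven, and frequency counting indicates it is false in precisely the range you need it. Take $\ft q=\Xi^{-1/2}\ind_{\{\Xi\leq|\xi|\leq2\Xi\}}$, so that $\|q\|_{L^2}\sim1$ and $\|q\|_{H^\s}\sim\Xi^{\s}$. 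On each of the $\sim\Xi^3$ frequency quadruples with $\xi_1+\xi_2+\xi_3+\xi_4=0$ and all $|\xi_i|\sim\Xi$, the $\kk$-integrated symbol of $A^{[4]}$ has modulus $\sim\Xi^{2s-2}$ (the integrand is $\sim\kk^{2s+1}\Xi^{-4}$ for $\kk\les\Xi$ and $\sim\kk^{2s-3}$ for $\kk\ges\Xi$), so, barring a cancellation that you have not identified and that the symmetrized symbol does not obviously possess, the quartic form is of size $\Xi^{3}\cdot\Xi^{2s-2}\cdot\Xi^{-2}=\Xi^{2s-1}$ --- heuristically it behaves like $\|q^2\|_{H^{s-1}}^2$ --- against a right-hand side of size $\Xi^{2\s}$. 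Your bound therefore forces $\s\geq s-\tfrac12$: the homogeneity degree $2s-2$ you computed is correct, but it cannot be distributed as two $L^2$ slots and two $H^\s$ slots, because the volume of the resonant set eats the resolvent gain. Integrating in $\kk$ first does not rescue a purely deterministic $H^\s$-based argument.

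What actually closes the estimate --- and this is the one place where the paper's proof departs from your plan --- is that the Gaussian part must not be measured in $H^\s$ at all. Lemma~\ref{LEM:Dr}(i) gives $\E\big[\kk^{p}\|(2\kk\pm\dd)^{-1}Y_N(1)\|_{L^\infty}^{p}\big]\leq C_p$, i.e.\ a full factor $\kk^{-1}$ per resolved Gaussian slot in every moment; this uses only Sobolev embedding, Minkowski's inequality and $|\ft{Y}(\xi)|\sim\jb{\xi}^{-s}|g_\xi|$ with $2s>1$, and it is strictly stronger than what a generic $H^\s$ function satisfies (Lemma~\ref{LEM:interp} gives only $\kk^{-\s-\frac12}$, which is sharp). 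With this single probabilistic input, the per-$\kk$ bounds \eqref{A4general} and Lemma~\ref{LEM:Ageq6} --- which you dismiss as irredeemably lossy --- produce $\kk^{2s-1}\cdot\kk^{-2}=\kk^{2s-3}$ for the purely Gaussian quartic contribution, integrable precisely because $s<1$, while the mixed and pure-drift terms close via your interpolation. (The deterministic counterpart, used in \eqref{A4EL}--\eqref{VEL} for the a.s.\ convergence of $E$, is that the resolvents are bounded on $L^\infty$ with norm $O(\kk^{-1})$ and that samples of $\mu_s$ lie a.s.\ in $L^\infty$ with all moments, since $s>\frac12$; the correct space for the Gaussian slots is $L^\infty$, not $H^\s$.) This ``linear level'' use of randomness, which your proposal explicitly avoids in favor of a deterministic $\|W\|_{H^\s}$ bound, is indispensable: without it the uniform-in-$L$ bound in your Bou\'e--Dupuis step cannot be established.
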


Before proving Proposition~\ref{PROP:Finteq}, we establish the uniform bounds on the truncated densities $F_{N,L}(q)$ in $L^p(d\mu_s)$.

\begin{proposition}\label{PROP:FMJL}
	Fix $\tfrac{1}{2}<s<1$ and $R>0$. Then, there exists $C_0>0$ such that
	\begin{align*}
		\sup_{N\in \NB\cup \{\infty\}}\sup_{L>1} \| F_{N,L}(q)\|_{L^{p}(d\mu_{s})} \leq C_0<\infty,
	\end{align*}
	for any $1\leq p<\infty$.
\end{proposition}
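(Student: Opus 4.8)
The goal is a uniform $L^p(d\mu_s)$ bound on the truncated densities $F_{N,L}(q)$, which reduces to showing that the quartic-and-higher contribution $E_L(\pi_N q)$ has an exponential moment bounded uniformly in $N$ and $L$. The plan is to apply the Bou\'e--Dupuis variational formula (as advertised in the introduction) to
\begin{align*}
	-\log \E_{\mu_s}\big[ F_{N,L}(q)^p\big] = -\log \E_{\mu_s}\big[ \ind_{\{\|q\|_{L^2}^2\leq R\}} e^{-pE_L(\pi_N q)}\big],
\end{align*}
writing the Gaussian field $q$ via its Cameron--Martin shift $q = Y + \Theta$, where $Y$ is the (sample of the) base Gaussian distributed as $\mu_s$ and $\Theta$ is a drift adapted to a Brownian filtration. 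This recasts the integrability into a stochastic control problem: it suffices to produce a \emph{lower} bound, uniform in $N,L$, for
\begin{align*}
	\inf_{\Theta} \E\Big[ p E_L\big(\pi_N(Y+\Theta)\big) + \tfrac12 \|\Theta\|_{H^s}^2 + \log\ind_{\{\cdots\}}\Big],
\end{align*}
so that the density exponential moment is bounded above. Since $-E_L$ carries no sign, the substantive content is an \emph{upper} bound $|E_L(\pi_N q)| \leq \tfrac{1}{2p}\|\Theta\|_{H^s}^2 + C(R,s)$ that absorbs the drift into the Cameron--Martin norm with a constant strictly less than one, together with a term controlled by the mass cutoff $\|q\|_{L^2}^2\leq R$.

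The heart of the matter is therefore a deterministic estimate on $E_L(q) = \int_1^L \kk^{2s}\,\mathcal V(\kk,q)\,d\kk$ that is (a) integrable in $\kk$ up to $L=\infty$ and (b) at most quadratic in a single high-regularity copy of $q$ with the rest measured in $L^2$. First I would expand $\mathcal V(\kk,q) = A^{[\geq 4]}(\kk,q) - \tfrac12 A^{[\geq 4]}(\tfrac\kk2,q)$ into its pieces $A^{[4]}$, $A^{[\geq 6]}_1$, $A^{[\geq 6]}_2$ from \eqref{A4}, \eqref{A61-def}, \eqref{A62-def}. For the sextic-and-higher terms, Lemma~\ref{LEM:Ageq6} already gives decay: $|A^{[\geq 6]}_2(\kk,q)| \les \kk^{-3}\|q\|_{L^2}^6(1+\|q\|_{L^2}^2)$, and using the interpolation Lemma~\ref{LEM:interp} (with a small $\theta$) to bound the $L^\infty$ factors in $A^{[\geq 6]}_1$ by $\kk^{-\frac{1+\theta}{2}}\|q\|_{L^2}^{1-\theta}\|q\|_{H^s}^\theta$, each yields an integrand decaying like $\kk^{2s - 3 - \theta}$ or better; since $s<1$, choosing $\theta$ appropriately makes $\int_1^\infty \kk^{2s}|A^{[\geq 6]}_j|\,d\kk$ converge and be controlled purely by $\|q\|_{L^2}$ (hence by $R$ under the cutoff), with at most a small power of $\|q\|_{H^s}$ that can be Young-ed into the drift. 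The delicate term is the quartic $A^{[4]}(\kk,q)$, estimated via \eqref{A4general}: $|A^{[4]}(\kk,q)|\les \kk^{-1}\|q\|_{L^2}^2 \|\tfrac{q}{2\kk-\dd}\|_{L^\infty}\|\tfrac{q}{2\kk+\dd}\|_{L^\infty}$, where I would apply the \emph{second} bound in Lemma~\ref{LEM:interp}, $\|\tfrac{q}{2\kk\pm\dd}\|_{L^\infty}\les \kk^{-\sigma-\frac12}\|q\|_{H^\sigma}$ for $0\le\sigma<\tfrac12$.

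The main obstacle is extracting enough $\kk$-decay from the quartic term while keeping only two copies at the high-regularity $H^s$ level and the remaining two at $L^2$. Placing all four factors at the top regularity would cost $\int_1^\infty \kk^{2s}\cdot\kk^{-1}\cdot\kk^{-2\sigma-1}\,d\kk$, which demands $2s - 2\sigma - 2 < -1$, i.e. $\sigma > s - \tfrac12$; but the measure is only supported in $H^\sigma$ for $\sigma < s-\tfrac12$, so this naive splitting is exactly \emph{critical/divergent}. The resolution, and the crux of the argument, is that one must exploit the difference structure $A^{[\geq 4]}(\kk,q)-\tfrac12 A^{[\geq 4]}(\tfrac\kk2,q)$ built into $\mathcal V$: the subtraction cancels the leading $\kk$-behaviour of the quartic symbol, gaining an extra factor of decay (morally the weight $w(\xi,\kk)$ in \eqref{wmult} already vanishes to second order at $\xi=0$ and the telescoping at scales $\kk,\tfrac\kk2$ sharpens the high-$\kk$ tail), so that the effective integrand decays fast enough to converge with $\sigma$ strictly below $s-\tfrac12$ on two of the factors and $L^2$ on the other two. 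After establishing this deterministic bound $|E_L(q)|\le C(R,s) + \tfrac{1}{2p}\|q\|_{H^s}^2$-type inequality uniformly in $L$ and in $N$ (the frequency projection $\pi_N$ only helps, being bounded on every norm used), the Bou\'e--Dupuis infimum is bounded below and Proposition~\ref{PROP:FMJL} follows; uniformity in $N\in\NB\cup\{\infty\}$ and $L>1$ is automatic since all constants depend only on $R$ and $s$.
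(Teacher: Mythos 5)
Your setup is the same as the paper's (Bou\'e--Dupuis with the split $q = Y + \Theta$, mass cutoff controlling $L^2$ norms, sextic terms handled by Lemma~\ref{LEM:Ageq6} and interpolation), and you correctly identify the critical obstruction: a purely deterministic estimate of the quartic term at the sample regularity $\s < s - \tfrac12$ gives the integrand $\kk^{2s-2-2\s}$, which is divergent exactly because $\s < s-\tfrac12$. But your proposed resolution is wrong. There is no cancellation in the difference $A^{[4]}(\kk,q) - \tfrac12 A^{[4]}(\tfrac\kk2,q)$: expanding for large $\kk$, one has $A^{[4]}(\kk,q) \approx -c\,\kk^{-3}\int q^4\,dx$, so $-\tfrac12 A^{[4]}(\tfrac\kk2,q) \approx +4c\,\kk^{-3}\int q^4\,dx$, and the two leading terms \emph{add} (net coefficient $3c\,\kk^{-3}$) rather than cancel. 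The subtraction at scales $\kk$, $\tfrac\kk2$ is engineered solely to fix the \emph{quadratic} part, whose symbol $\tfrac{2\kk}{4\kk^2+\xi^2}$ decays only like $\kk^{-1}$ and would make $\int \kk^{2s}\,d\kk$ diverge; that part has already been extracted into the Gaussian measure $\mu_s$, and indeed the paper's proof estimates $A^{[\geq 4]}(\kk,\cdot)$ and $A^{[\geq 4]}(\tfrac\kk2,\cdot)$ completely separately. Moreover, even a hypothetical cancellation would act in the regime $|\xi| \ll \kk$, whereas the divergence in your deterministic bound is driven by frequencies $|\xi|\sim\kk$ (these dominate the sum $\sum_\xi \jb{\xi}^{-2\s}(4\kk^2+\xi^2)^{-1}$ when $\s<\tfrac12$), so the difference structure cannot rescue it.

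The actual mechanism in the paper is probabilistic, not structural: the randomness of the Gaussian piece is used at the linear level. By the second bound in \eqref{YLinfty} (Lemma~\ref{LEM:Dr}(i)), one has $\E\big[\|(2\kk\pm\dd)^{-1}Y_N(1)\|_{L^\infty}^p\big]^{1/p} \les \kk^{-1}$, a full $\kk^{-1}$ gain per Gaussian factor --- strictly better than the deterministic $\kk^{-\s-\frac12}$ with $\s<s-\tfrac12$ --- which is possible precisely because $2s>1$ makes $\sum_\xi \jb{\xi}^{4\eps}\jb{\m_s(\xi)}^{-2}(4\kk^2+\xi^2)^{-1} \les \kk^{-2}$ converge with the whole $\kk^{-2}$ extracted. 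With this, the pure-Gaussian quartic contribution has integrand $\kk^{2s-3}$, integrable exactly for $s<1$; mixed terms pair one probabilistic $\kk^{-1}$ gain against $\|\Theta_N\|_{H^s}$; and pure-drift terms use Lemma~\ref{LEM:interp} with $\ta\in(2s-1,1)$, interpolating between $L^2$ (controlled by the cutoff) and $H^s$, with the $\|\Theta\|_{H^s}^{2\ta}$ powers absorbed via Young's inequality into $-\tfrac12\|\Theta\|_{H^s}^2$. Note also that your target inequality $|E_L(q)|\le C(R,s)+\tfrac{1}{2p}\|q\|_{H^s}^2$ for the full field $q=Y+\Theta$ cannot be used as stated, since $\|Y\|_{H^s}=\infty$ almost surely; the estimate must treat the Gaussian and drift pieces asymmetrically, which is exactly what the paper's proof does.
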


In order to prove Proposition~\ref{PROP:FMJL}, we use a variational formula for which we need to first introduce some notations.
Let $W(t)$ be a cylindrical Brownian motion in $L^2(\T)$
\begin{align*}
W(t) = \sum_{\xi  \in  2\pi \Z} B_\xi(t) e^{i\xi x},
\end{align*}

\noi
where
$\{B_\xi\}_{\xi \in 2\pi \Z}$ is a sequence of mutually independent complex-valued\footnote{By convention, we normalize $B_\xi$ such that $\text{Var}(B_\xi(t)) = t$. In particular, $B_0$ is  a standard real-valued Brownian motion.} Brownian motions such that
$\cj{B_\xi}= B_{-\xi}$, $\xi \in 2\pi\Z$.
Then, define a centered Gaussian process $Y(t)$
by
\begin{align}
Y(t)
=  \jb{\mathfrak{m}_{s}(-i\dd)}^{-1}W(t).
\label{P2}
\end{align}

\noi
Note that
we have $\text{Law}(Y(1)) = \mu_{s}$,
where $\mu_{s}$ is the Gaussian measure in \eqref{gauss0}.
By setting  $Y_N = \pi_N Y $,
we have  $\text{Law}(Y_N(1)) = (\pi_N)_\#\mu_{s}$,
which is the pushforward of $\mu_{s}$ under $\pi_N$.
Also, let $\mathbb{H}_{a}$ denote the space of drifts,
which are progressively measurable processes
belonging to
$L^2([0,1]; L^2(\T^3))$, $\PP$-almost surely.
We now state the  Bou\'e-Dupuis variational formula \cite{BD, Ust};
in particular, see Theorem 7 in \cite{Ust}.

\begin{lemma}\label{LEM:var3}
Let $Y$ be as in \eqref{P2} and $N \in \NB \cup\{\infty\}$.
Suppose that  $G:C^\infty(\T) \to \R$
is measurable such that $\E\big[|G(\pi_N Y(1))|^p\big] < \infty$
and $\E\big[|e^{-G(\pi_N Y(1))}|^q \big] < \infty$, for some $1 < p, q < \infty$ with $\frac 1p + \frac 1q = 1$.
Then, we have
\begin{align*}
-\log \E\Big[e^{-G(\pi_N Y(1))}\Big]
= \inf_{\ta \in \mathbb H_a}
\E\bigg[ G(\pi_N Y(1) + \pi_N I(\ta)(1)) + \frac{1}{2} \int_0^1 \| \ta(t) \|_{L^2_x}^2 dt \bigg],
\end{align*}

\noi
where  $I(\ta)$ is  defined by
\begin{align*}
 I(\ta)(t) = \int_0^t \jb{\mathfrak{m}_{s}(-i\dd)}^{-1} \ta(t') dt'
\end{align*}

\noi
and the expectation $\E = \E_\PP$
is taken with respect to the underlying probability measure~$\PP$.

\end{lemma}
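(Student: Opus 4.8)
The plan is to obtain the identity as a direct specialization of the abstract Boué-Dupuis formula, Theorem~7 in \cite{Ust}; the substance of the lemma is simply checking that our infinite-dimensional setup meets its hypotheses, so I would not reprove it from scratch. Fix $N \in \NB \cup \{\infty\}$ and regard
\[
\Psi_N : W \longmapsto G\big( \pi_N \jb{\m_s(-i\dd)}^{-1} W(1) \big)
\]
as a functional of the cylindrical Brownian motion $W$ in $L^2(\T)$. Since $Y(1) = \jb{\m_s(-i\dd)}^{-1} W(1)$ by \eqref{P2}, the left-hand side of the asserted identity is exactly $-\log \E[ e^{-\Psi_N(W)}]$, a functional of the terminal value $W(1)$, i.e. the simplest case of a path functional to which the general theorem applies.

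Next I would verify the structural inputs of Theorem~7. First, by Lemma~\ref{LEM:ms} the covariance operator $\jb{\m_s(-i\dd)}^{-2}$ has trace $\sim \sum_{\xi} \jb{\xi}^{-2s} < \infty$ precisely because $s > \tfrac12$ (equivalently $\jb{\m_s(-i\dd)}^{-1}$ is Hilbert-Schmidt), so $\text{Law}(Y(1)) = \mu_s$ is a genuine Gaussian probability measure on $L^2(\T)$ and $\Psi_N$ is an honest measurable functional. Second, the drift class $\mathbb{H}_a$ and the shift $I(\ta)(1) = \int_0^1 \jb{\m_s(-i\dd)}^{-1}\ta(t')\,dt'$ are exactly the Cameron-Martin perturbation induced on $Y$ by replacing $W$ with $W + \int_0^\cdot \ta$; since $\pi_N$ commutes with $\jb{\m_s(-i\dd)}^{-1}$, the shifted argument equals $\pi_N(Y(1)+I(\ta)(1))$ as written. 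Third, the integrability conditions $\E[|G(\pi_N Y(1))|^p] < \infty$ and $\E[|e^{-G(\pi_N Y(1))}|^q]<\infty$ with $\tfrac1p+\tfrac1q=1$ are assumed, and these are precisely what Theorem~7 requires.

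For completeness, the conceptual mechanism behind the formula, were one to give a self-contained proof, is the Donsker-Varadhan variational characterization of relative entropy, $-\log \E_{\mu_s}[e^{-G}] = \inf_{\nu \ll \mu_s}\{ \E_\nu[G] + H(\nu\,|\,\mu_s)\}$, combined with Girsanov's theorem: the laws of drifted Brownian motions $W + \int_0^\cdot \ta$ are exactly the measures absolutely continuous with respect to Wiener measure (this realizability is the martingale representation theorem), and Girsanov computes their relative entropy as $\tfrac12 \E[\int_0^1 \|\ta(t)\|_{L^2_x}^2\,dt]$. The one genuinely technical point, and the place I expect most care to be needed, is the infinite-dimensional bookkeeping: ensuring $W$ is a well-defined cylindrical Brownian motion on $L^2(\T)$, that $I(\ta)$ and the associated exponential martingale are correctly set up so that Girsanov applies in the Hilbert-space setting, and that $Y(1)$ is square-integrable. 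All of these hold in the regime $s>\tfrac12$ where $\jb{\m_s(-i\dd)}^{-1}$ is Hilbert-Schmidt, and all are subsumed by the Hilbert-space version recorded in \cite{Ust}; accordingly, in the paper this lemma should reduce to the cited reference.
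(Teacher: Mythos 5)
Your proposal is correct and matches the paper's treatment: the paper offers no proof of Lemma~\ref{LEM:var3} at all, simply invoking Theorem~7 of \cite{Ust} (as announced in the sentence preceding the lemma), which is exactly the reduction you carry out. Your verification of the hypotheses (trace-class covariance for $s>\tfrac12$ via Lemma~\ref{LEM:ms}, the Cameron--Martin form of the shift $I(\ta)$, commutation of $\pi_N$ with $\jb{\m_s(-i\dd)}^{-1}$, and the assumed $L^p$/$L^q$ integrability) is a sound, if implicit-in-the-paper, complement to that citation.
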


Before proceeding to the proof of Proposition~\ref{PROP:FMJL}, we state a lemma on the pathwise regularity bounds  of
$Y(1)$ and $I(\ta)(1)$.

\begin{lemma}  \label{LEM:Dr}

\textup{(i)}
Given any $1 \leq p <\infty$ and $\kk\geq 1$,
we have
\begin{align}
\begin{split}
\E
\Big[ & \|Y_N(1)\|_{W^{s-\frac{1}{2}-\eps,\infty}}^p
+ \kk^{p} \|(2\kk \pm \dd)^{-1} Y_{N}(1)\|_{L^{\infty}}^{p}
\Big]
\leq C_{\eps, p} <\infty,
\end{split}
\label{YLinfty}
\end{align}

\noi
 for any $\eps>0$ and uniformly in $N \in \NB\cup\{\infty\}$.

\smallskip

\noi
\textup{(ii)} For any $\ta \in \mathbb{H}_a$, we have
\begin{align}
\| I(\ta)(1) \|_{H^{s}}^2 \leq \int_0^1 \| \ta(t) \|_{L^2}^2dt.
\label{CM}
\end{align}
\end{lemma}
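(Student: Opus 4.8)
The plan is to handle the two parts by separate and essentially standard mechanisms: part~(ii) is a direct Cameron--Martin computation, while part~(i) combines Sobolev embedding, Minkowski's integral inequality, and Gaussian moment bounds, with the only delicate point being uniformity in $\vk$. I would bound the two expectations in \eqref{YLinfty} separately, since they are structurally identical.

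First I would dispose of part~(ii). Since $I(\ta)(1) = \int_0^1 \jb{\m_{s}(-i\dd)}^{-1}\ta(t)\,dt$, applying the Fourier multiplier $\jb{\m_{s}(-i\dd)}$ and then using Plancherel, Minkowski's integral inequality, and Cauchy--Schwarz in $t$ over the unit interval gives
\[
\|\jb{\m_{s}(-i\dd)} I(\ta)(1)\|_{L^2}^2 = \Big\| \int_0^1 \ta(t)\,dt \Big\|_{L^2}^2 \leq \Big( \int_0^1 \|\ta(t)\|_{L^2}\,dt\Big)^2 \leq \int_0^1 \|\ta(t)\|_{L^2}^2\,dt.
\]
The estimate \eqref{CM} then follows from the norm equivalence $\|f\|_{H^s}\sim \|\jb{\m_{s}(-i\dd)}f\|_{L^2}$ recorded in Lemma~\ref{LEM:ms}.

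For part~(i) the scheme is the familiar reduction to a variance computation. By Sobolev embedding, $W^{\sigma,r}\embeds W^{s-\frac12-\eps,\infty}$ whenever $\sigma = s-\frac12-\frac\eps2$ and $r>2/\eps$, and similarly $W^{\delta,r}\embeds L^{\infty}$ for $r\delta>1$; this reduces both $L^\infty$-type norms to $L^r_x$ norms of a frequency-weighted random field. It suffices to treat $p\geq r$, since for smaller $p$ the bound follows from $\|\cdot\|_{L^p(\PP)}\leq\|\cdot\|_{L^r(\PP)}$ on a probability space. Then $p/r\geq1$, so Minkowski's integral inequality exchanges the $L^p_\omega$ and $L^r_x$ norms and leaves $\int_\T \E[|X(x)|^p]^{r/p}\,dx$ for the relevant pointwise field $X(x)$. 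As $X(x)$ is a mean-zero Gaussian (for $N=\infty$, a convergent limit of linear combinations of the independent $B_\xi(1)$), Gaussian hypercontractivity gives $\E[|X(x)|^p]^{1/p}\les \sqrt{p}\,\E[|X(x)|^2]^{1/2}$, so everything reduces to bounding $\E[|X(x)|^2]$ uniformly in $x$ and $N$ (and, for the weighted term, in $\vk$).

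These variances are explicit Fourier sums evaluated via $\E[|B_\xi(1)|^2]=1$ and $\jb{\m_{s}(\xi)}\sim\jb{\xi}^s$ (Lemma~\ref{LEM:ms}). For the first term the variance is $\sim \sum_{\xi}\jb{\xi}^{2\sigma-2s}=\sum_\xi \jb{\xi}^{-1-\eps}<\infty$, uniformly in $x$ and $N$ (the truncation $\pi_N$ only removes positive terms, and the $-\eps$ is exactly what makes the endpoint sum converge). For the weighted term the relevant quantity is $\vk^2\sum_{\xi}\jb{\xi}^{2\delta}(4\vk^2+\xi^2)^{-1}\jb{\xi}^{-2s}$, and I would split into $|\xi|\les\vk$ and $|\xi|\gg\vk$: on the former $(4\vk^2+\xi^2)^{-1}\sim\vk^{-2}$ and the sum converges since $2\delta-2s<-1$, while on the latter one is left with $\vk^{2\delta-2s+1}\les1$ for $\vk\geq1$. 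Both estimates require $s>\frac12$ together with $\delta>0$ chosen small (so that $\delta<s-\frac12$, compatibly with $r\delta>1$), and this uniform-in-$\vk$ variance bound is the one genuinely delicate step; everything else is routine. Collecting the estimates yields a constant $C_{\eps,p}<\infty$ independent of $N$ and $\vk$, completing part~(i).
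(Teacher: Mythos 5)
Your proposal is correct and takes essentially the same route as the paper: the paper quotes \cite[Lemma 4.7]{GOTW} for the first bound in \eqref{YLinfty} and for \eqref{CM}, and proves the $\vk$-weighted bound precisely by your scheme --- Sobolev embedding into $W^{2\eps,\frac{1}{\eps}}$, Minkowski's inequality for $p$ large (with smaller $p$ handled by H\"older), Gaussian moment bounds, and the same variance computation $\sum_{\xi}\jb{\xi}^{4\eps}\jb{\m_s(\xi)}^{-2}(4\vk^2+\xi^2)^{-1}\les \vk^{-2}$ via splitting $|\xi|\les \vk$ and $|\xi|\gg\vk$. The one caveat is that your part (ii) yields \eqref{CM} only up to the constant from the norm equivalence in Lemma~\ref{LEM:ms} (the constant-one inequality holds exactly for the Cameron--Martin norm $\|\jb{\m_s(-i\dd)}\,\cdot\,\|_{L^2}$, not for $\|\cdot\|_{H^s}$, and indeed fails with constant one for $s$ near $\tfrac12$ since $C_s<1$ there), but this imprecision is present in the paper's own statement and is harmless in its application in Proposition~\ref{PROP:FMJL}.
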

\begin{proof}
The proofs of the first bound in \eqref{YLinfty} and \eqref{CM} are well-known and can be found, for instance, in \cite[Lemma 4.7]{GOTW}. For the second bound in \eqref{YLinfty}, we argue in the usual way: given $\tfrac{1}{2}<s<1$, let $\eps>0$ be sufficiently small  such that $s-\tfrac{1}{2}-2\eps>0$, and consider $p\geq \tfrac{2}{\eps}$. Then, by Sobolev embedding, Minkowski's inequality, the representation \eqref{randomfourier}, and \eqref{mbd}, we have
\begin{align*}
\E
\Big[  \|(2\kk \pm \dd)^{-1} Y_{N}(1)\|_{L^{\infty}_x}^{p}\Big]^{\frac{1}{p}} & \les \E\big[ \| (2\kk \pm \dd)^{-1} Y_{N}(1)\|_{W^{2\eps,\frac1\eps}_x}^{p}]^{\frac{1}{p}} \\
& \les \bigg\| \E\bigg[ \bigg( \sum_{|\xi|\leq 2\pi N} \frac{\jb{\xi}^{2\eps}g_{\xi}(\o)}{\jb{\mathfrak{m}_{s}(\xi)}}(2\kk\pm i\xi)^{-1}e^{i\xi x}\bigg)^{p}\bigg]^{\frac{1}{p}}  \bigg\|_{L^{\frac1\eps}_x(\T)} \\
& \les \bigg( \sum_{|\xi|\leq 2\pi N} \frac{\jb{\xi}^{4\eps}}{ \jb{\mathfrak{m}_{s}(\xi)}^{2}(4\kk^2+\xi^2)}\bigg)^{\frac{1}{2}}  \les \kk^{-1}.
\end{align*}
The result for $1\leq p<\tfrac{2}{\eps}$ then follows by H\"older's inequality.
\end{proof}

\begin{remark}\rm  \label{RMK:Linfty}
As $Y(1)$ has the same law as a sample of the Gaussian measure $\mu_s$, we note that \eqref{YLinfty} also holds with the expectation over $\mathbb{P}$ replaced by the expectation over $\mu_s$, and $Y_{N}(1)$ replaced by $\pi_{N} q$.
\end{remark}

\begin{proof}[Proof of  Proposition~\ref{PROP:FMJL}]
Fix $1\leq p <\infty$, $J\in \NB$, $L>1$, and $N\in \NB\cup\{\infty\}$. Since
\begin{align*}
\E_{\mu_{s}}\Big[ \ind_{\{ \|q\|^2_{L^{2}}\leq R\}} e^{p\min(-E_{L}(\pi_N q),J)}\Big] \leq \E_{\mu_{s}}\Big[ e^{p\min(-E_{L}(\pi_N q),J)\ind_{\{ \|q\|^2_{L^{2}}\leq R\}} }\Big] ,
\end{align*}
it suffices to prove
\begin{align}
\sup_{N\in\NB\cup\{\infty\}}\sup_{L>1} \, \sup_{J\in \NB}\E_{\mu_{s}}\Big[ e^{p\min(-E_{L}(\pi_N q),J)\ind_{\{ \|q\|^2_{L^{2}}\leq R\}} }\Big]  \leq C(p,R)<\infty. \label{BDbd}
\end{align}
Thus, by the variational formula (Lemma~\ref{LEM:var3}) and Lemma~\ref{LEM:Dr}, we have
\begin{align}
&\log \E_{\mu_{s}}\Big[ e^{p\min(-E_{L}(\pi_N q),J)\ind_{\{ \|q\|^2_{L^{2}}\leq R\}} }\Big]  \notag \\
& = \log \E\Big[ e^{p\min(-E_{L}(\pi_{N} Y(1)),J)\ind_{\{ \|Y(1)\|^2_{L^{2}}\leq R\}} }\Big]\notag  \\
& = \sup_{\ta\in \mathbb{H}_{a}}\E\bigg[ p\min(-E_{L}(\pi_N Y(1)+\pi_NI(\ta)(1)),J)\ind_{\{ \|Y(1)+I(\ta)(1)\|^2_{L^{2}}\leq R\}} -\frac{1}{2}\int_{0}^{1} \|\ta(t)\|_{L^2}^2 dt\bigg]\notag \\
& \leq \sup_{\ta\in \mathbb{H}_{a}}\E\bigg[ -pE_{L}(\pi_{N} Y(1)+\pi_N I(\ta)(1))\ind_{\{ \|Y(1)+I(\ta)(1)\|^2_{L^{2}}\leq R\}} -\frac{1}{2} \|I(\ta)(1)\|_{H^{s}}^{2} \bigg], \label{varformula}
\end{align}
where in the first line the expectation is with respect to $\mu_{s}$, while in the remaining lines, the expectation is with respect to the underlying probability measure $\mathbb{P}$. Moreover, the upper bound in \eqref{varformula} is uniform in $J\in \NB$. We remark that the only purpose of $J$ is to ensure that we satisfy the integrability assumptions in Lemma~\ref{LEM:var3}.

For simplicity, let $Y:=Y(1)$, $Y_{N} :=\pi_{N} Y$, $\Dr:=I(\ta)(1)$, and $\Dr_{N}:=\pi_{N} \Dr$.
Thus, \eqref{BDbd} follows once we prove a uniform in $N$ and $L$ bound on \eqref{varformula}. We recall from \eqref{EMJ} that
\begin{align*}
E_{L}(Y_N+\Dr_N) =\int_{1}^{L} \kk^{2s} \mathcal{V}(\kk ,Y_N+\Dr_N) d\kk.
\end{align*}
Using \eqref{EEdefn}, \eqref{A4}, and \eqref{Ageq6}, we further expand this as
\begin{align}
\begin{split}
E_{L}(Y_N+\Dr_N)
& = \int_{1}^{L} \kk^{2s} \big\{ A^{[4]}(\kk , Y_N+\Dr_N)-\tfrac{1}{2} A^{[4]}(\tfrac{\kk}{2} , Y_N+\Dr_N)\big\} d\kk\\
&\hphantom{X} +   \int_{1}^{L} \kk^{2s}\big\{ A^{[\geq 6]}(\kk, Y_N+\Dr_N)-\tfrac{1}{2} A^{[\geq 6]}(\tfrac{\kk}{2}, Y_N+\Dr_N)\big\} d\kk.
\end{split} \label{EMJdecomp}
\end{align}

Returning to \eqref{varformula}, we have from \eqref{EMJdecomp}, \eqref{A4general}, Lemma~\ref{LEM:interp}, Tonnelli's theorem, Lemma~\ref{LEM:Ageq6}, \eqref{YLinfty}, H\"older's and Young's inequalities,
\begin{align*}
	&\E\bigg[ -pE_{L}(Y_N+\Dr_N)\ind_{\{ \|Y_N+\Dr_N\|^2_{L^{2}}\leq R\}} -\frac{1}{2} \|\Dr\|_{H^{s}}^{2} \bigg] \\
	& \leq \E\bigg[  \bigg\{  C_{p} \|Y_N+\Dr_N\|_{L^2}^{2}  \int_{1}^{L} \kk^{2s-1} \bigg(  \| \tfrac{Y_N}{2\kk -\dd}\|_{L^\infty}\| \tfrac{Y_N}{2\kk +\dd}\|_{L^{\infty}} \\
	& \hphantom{XXX}+ \| \tfrac{Y_N}{2\kk -\dd}\|_{L^\infty}\| \tfrac{\Dr_N}{2\kk +\dd}\|_{L^{\infty}}+\| \tfrac{\Dr_N}{2\kk-\dd}\|_{L^\infty}\| \tfrac{Y_N}{2\kk +\dd}\|_{L^{\infty}}+\| \tfrac{\Dr_N}{2\kk -\dd}\|_{L^\infty}\| \tfrac{\Dr_N}{2\kk +\dd}\|_{L^{\infty}} \bigg) d\kk\\
	&  \hphantom{XXX}+ C_{p}(1+\| Y_{N} +\Dr_N\|_{L^2}^{8})\int_{1}^{L} \kk^{2s-2} \|\tfrac{Y_N+\Dr_N}{2\kk  -\dd}\|_{L^\infty}\|\tfrac{Y_N+\Dr_N}{2\kk  +\dd}\|_{L^{\infty}} d\kk \\
	& \hphantom{XXX}+ C_{p}(1+\|Y_N+\Dr_N\|_{L^2}^{8}) \bigg\} \ind_{\{ \|Y_N+\Dr_N\|^2_{L^{2}}\leq R\}}  -\tfrac{1}{2}\|\Dr \|_{H^{s}}^{2}\bigg] \\
	& \leq C_p R \int_{1}^{L} \kk^{2s-1}\bigg\{ \kk^{-2} +\kk^{-1}\E\bigg[  \|\tfrac{Y_N}{2\kk  \pm \partial}\|_{L^\infty} \|\Dr_N\|_{H^s} +  \kk^{-\theta} (\|Y_N\|_{L^2}^2+R)^{1-\theta} \|\Dr_N\|_{H^s}^{2\theta}\bigg] \bigg\} d\kk\\
	&  \hphantom{XX}+C_p(1+R^5) \int_{1}^{L} \kk^{2s-3}  d\kk + C_p (1+R^4)-\E\big[ \tfrac{1}{2} \|\Dr\|_{H^{s}}^{2} \big]
	\\
	& \leq C_p (1+R^4) \int_{1}^{L} \kk^{2s-2} \E\Big[ \big\| \tfrac{Y_N}{2\kk  \pm \partial } \big\|_{L^\infty} \|\Dr_N\|_{H^s} + \kk^{-\theta} (\|Y_N\|_{L^2}^2+R)^{1-\theta} \|\Dr_N\|_{H^s}^{2\theta}\Big]d\kk\\
	& \phantom{XXX} + C_p(1+R^4) - \E[\tfrac12 \|\Dr \|^2_{H^s}] \\
	& \leq C_p(1+R^4) + C_p (1+R^4) \int_{1}^{L} \kk^{2s-2}  \E\big[ \big\| \tfrac{Y_N}{2\kk \pm \partial} \big\|_{L^2}^{2\wt{p}} \big]^{\frac{1}{2\wt{p}}} \E \big[ \|\Dr_N\|_{H^s}^{1+} \big]^{\frac{1}{1+}}d\kk \\
	& \phantom{XXX} + C_p (1+R^4) \int_{1}^{L} \kk^{2s-2-\theta} \E\Big[ C_\dl (\|Y_N\|^2_{L^2}+R)^{(1+\theta)C} + \dl \|\Dr_N\|_{H^s}^2 \Big]d\kk - \E[ \tfrac12 \|\Dr\|_{H^s}^2]\\
	& \leq C_p (1+R^4) + C_p (1+R^4)  \E\big[ \|\Dr_N \|_{H^s}^{1+} \big]^{\frac{1}{1+}}  - \E[\tfrac14 \|\Dr\|_{H^s}^2]\\
	& \leq C_{p,R}+ \E[ \tfrac{1}{16} \|\Dr_N\|_{H^{s}}^{2}]-\E\big[ \tfrac{1}{4} \|\Dr\|_{H^{s}}^{2} \big]  \\
	& \leq C_{p,R}
\end{align*}
uniformly in $\Dr$, and where we chose $0<\ta<1$ such that $2s-1<\theta<1$, $\dl>0$ sufficiently small, and $2\leq \wt{p}<\infty$ sufficiently large.
This shows that
\begin{align*}
\log \E_{\mu_{s}}\Big[ \exp\Big({p\min(-E_{L}(\pi_N q),J)\ind_{\{ \|q\|^2_{L^{2}}\leq R\}} } \Big)\Big]  \leq C_{p,R}
\end{align*}
uniformly in $ J\in \NB$, $L>1$, and $N\in \NB \cup \{\infty\}$. Now, by the monotone convergence theorem, we have
\begin{align*}
\E_{\mu_{s}}\Big[ \exp\Big(-pE_{L}(\pi_N q)\ind_{\{ \|q\|^2_{L^{2}}\leq R\}} \Big)\Big]  \leq C_{p,R}
\end{align*}
uniformly in $N\in \NB\cup \{\infty\}$ and $L>1$, which completes the proof of Proposition~\ref{PROP:FMJL}.
\end{proof}

With the uniform bound of Proposition~\ref{PROP:FMJL} in place, we now aim to take limits, first as $L\to \infty$ and then as $N\to \infty$.
For the first limit, we need the following convergence result.
\begin{lemma}\label{LEM:EJM}
Let $\tfrac{1}{2}<s<1$. Then, there exists $\ta>0$ such that
\begin{align}
| E_{L_1}(\pi_N q)-E_{L_2}(\pi_N q)|  \les (L_1 \wedge L_2)^{-\ta} (1+\|\pi_{N} q\|_{L^{\infty}}^{C})\label{EJcauchy}
\end{align}
where the implicit constant is uniform in $N\in \NB \cup \{\infty\}$ and $L_1,L_2>1$, and for some $C>0$. Moreover, $E$ in \eqref{Edefn} is well-defined as an absolutely convergent improper Riemann integral $\mu_{s}$-a.s., $E$ belongs to $L^p(d\mu_{s})$, and it satisfies
\begin{align*}
 \sup_{N\in \NB \cup \{\infty\}} \|E(\pi_{N}q)- E_{L}(\pi_{N}q)\|_{L^{p}(d\mu_{s})} \les_{p} L^{-\ta}.
\end{align*}
In particular, $E_{L}(\pi_{N} q)\to E(\pi_{N} q)$ in $L^{p}(d\mu_{s})$, uniformly in $N\in \NB \cup \{\infty\}$, and $\mu_{s}$-a.s.
\end{lemma}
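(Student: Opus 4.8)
The plan is to show that for any admissible input $f$ the integrand $\kk^{2s}\mathcal{V}(\kk,f)$ decays like $\kk^{2s-3}$ as $\kk\to\infty$, which is integrable precisely because $s<1$; the bound \eqref{EJcauchy} and every convergence statement then follows by integrating the tail. The crucial point is that each of the two resolvents $(2\kk\pm\dd)^{-1}$ entering $A^{[4]}$ and $A^{[\geq 6]}$ should be estimated at the \emph{endpoint} $L^\infty\to L^\infty$, where $\|(2\kk\pm\dd)^{-1}g\|_{L^\infty}\les\kk^{-1}\|g\|_{L^\infty}$ (Young's inequality applied to the explicit kernel $\mathcal{G}_0(\cdot,\cdot;2\kk)$ in \eqref{periodG}, whose $L^1$ norm is $\sim\kk^{-1}$). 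This gives $\kk^{-1}$ per resolvent and hence the full $\kk^{-3}$ decay of $\mathcal{V}$; the cruder $L^2$-based estimate $\|(2\kk\pm\dd)^{-1}g\|_{L^\infty}\les\kk^{-1/2}\|g\|_{L^2}$ from Lemma~\ref{LEM:interp} would only give $\kk^{-2}$, which is \emph{not} integrable against $\kk^{2s}$ for $s>\tfrac12$. Note that the difference structure of $\mathcal{V}$ is irrelevant to this decay (it is essential only at the quadratic level that defines $\mu_s$).

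For the deterministic bound \eqref{EJcauchy} I assume $L_1<L_2$, so that $E_{L_1}(\pi_N q)-E_{L_2}(\pi_N q)=-\int_{L_1}^{L_2}\kk^{2s}\mathcal{V}(\kk,\pi_N q)\,d\kk$. Combining \eqref{A4}, the multilinear bound \eqref{A4general}, the endpoint resolvent estimate above, and $\|\pi_N q\|_{L^2}\leq\|\pi_N q\|_{L^\infty}$ gives $|A^{[4]}(\kk,\pi_N q)|\les\kk^{-3}\|\pi_N q\|_{L^\infty}^4$, while Lemma~\ref{LEM:Ageq6} treated the same way yields $|A^{[\geq 6]}(\kk,\pi_N q)|\les\kk^{-3}(1+\|\pi_N q\|_{L^\infty}^C)$ for some $C>0$. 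Applying these at both $\kk$ and $\tfrac\kk2$ (legitimate since $\kk\geq L_1>1$ forces $\tfrac\kk2>\tfrac12$) produces $\kk^{2s}|\mathcal{V}(\kk,\pi_N q)|\les\kk^{2s-3}(1+\|\pi_N q\|_{L^\infty}^C)$, and integrating over $\kk\geq L_1$ gives \eqref{EJcauchy} with $\theta=2-2s>0$, uniformly in $N$ (the bound involves only $\|\pi_N q\|_{L^\infty}$). In particular, for each fixed $N$ and each $q$ with $\|\pi_N q\|_{L^\infty}<\infty$, the family $\{E_L(\pi_N q)\}_{L}$ is Cauchy as $L\to\infty$, so $E(\pi_N q)$ exists as an absolutely convergent improper Riemann integral, with $|E(\pi_N q)-E_L(\pi_N q)|\les L^{-\theta}(1+\|\pi_N q\|_{L^\infty}^C)$.

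For the $L^p(d\mu_s)$ statements, which must also cover $N=\infty$ where $\|q\|_{L^\infty}=\infty$ $\mu_s$-a.s., I replace the deterministic endpoint bound by its probabilistic counterpart: by Lemma~\ref{LEM:Dr}(i) and Remark~\ref{RMK:Linfty}, $\big\|\,\|(2\kk\pm\dd)^{-1}\pi_N q\|_{L^\infty}\big\|_{L^p(d\mu_s)}\les_{p}\kk^{-1}$ uniformly in $N$, while all $L^2$-moments of $\pi_N q$ are bounded uniformly in $N$. Feeding these into \eqref{A4general}, Lemma~\ref{LEM:Ageq6}, and H\"older's inequality gives the key bound
\begin{align*}
\sup_{N\in\NB\cup\{\infty\}}\|\mathcal{V}(\kk,\pi_N q)\|_{L^p(d\mu_s)}\les_{p}\kk^{-3}, \qquad \kk\geq 1.
\end{align*}
By Minkowski's integral inequality, $\|E_L(\pi_N q)-E_{L'}(\pi_N q)\|_{L^p(d\mu_s)}\leq\int_{L\wedge L'}^{\infty}\kk^{2s}\|\mathcal{V}(\kk,\pi_N q)\|_{L^p(d\mu_s)}\,d\kk\les(L\wedge L')^{-\theta}$, so $\{E_L(\pi_N q)\}_L$ is $L^p(d\mu_s)$-Cauchy uniformly in $N$. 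Taking $p=2$ and $N=\infty$, Tonelli's theorem gives $\E_{\mu_s}\int_1^\infty\kk^{2s}|\mathcal{V}(\kk,q)|\,d\kk\les\int_1^\infty\kk^{2s-3}\,d\kk<\infty$, whence the improper integral $E(q)$ converges absolutely $\mu_s$-a.s.\ and Minkowski's inequality shows $E\in L^p(d\mu_s)$. Extracting an a.s.-convergent subsequence identifies this pointwise $E(q)$ with the $L^p$-limit of $E_L(\pi_\infty q)$ (and, for finite $N$, $E(\pi_N q)$ with both its pointwise and $L^p$ limits). Letting $L'\to\infty$ in the Cauchy estimate yields $\sup_N\|E(\pi_N q)-E_L(\pi_N q)\|_{L^p(d\mu_s)}\les_{p}L^{-\theta}$ and the claimed uniform $L^p$ and $\mu_s$-a.s.\ convergence.

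The main obstacle is securing the $\kk^{-3}$ decay rate: the naive $L^2$ resolvent bounds give only $\kk^{-2}$, which diverges against $\kk^{2s}$ for $s>\tfrac12$, so one is forced to the $L^\infty$ endpoint --- deterministically (admissible because $\pi_N q$ is a trigonometric polynomial for finite $N$) for \eqref{EJcauchy}, and via the Gaussian $L^\infty$-moment bounds of Lemma~\ref{LEM:Dr} for the $L^p(d\mu_s)$ and $N=\infty$ statements. A secondary technical point is the extension of the estimates in Proposition~\ref{PROP:Gamma} and Lemma~\ref{LEM:Ageq6} to the range $\kk\geq\tfrac12$ needed to control the $A^{[\geq 4]}(\tfrac\kk2,\cdot)$ term for $\kk$ near $1$; this holds by the same non-perturbative arguments, and in any case only the tail $\kk\to\infty$ governs the decay rate $\theta$.
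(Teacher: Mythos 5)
Your proposal is correct and follows essentially the same route as the paper: the core of both arguments is the pointwise bound $|\mathcal{V}(\kk,q)|\les \kk^{-3}\big(1+\|q\|_{L^{\infty}}^{C}\big)$ (the paper's \eqref{A4EL}--\eqref{VEL}), which after integrating the tail gives \eqref{EJcauchy} with $\ta=2-2s$, and the $L^p(d\mu_s)$ and $\mu_s$-a.s.\ statements are then obtained exactly as you do, from the uniform-in-$N$ Gaussian moment bounds of Lemma~\ref{LEM:Dr} and Remark~\ref{RMK:Linfty}. Your explicit isolation of the endpoint bound $\|(2\kk\pm\dd)^{-1}g\|_{L^\infty}\les \kk^{-1}\|g\|_{L^\infty}$ via the kernel \eqref{periodG} is a welcome clarification rather than a deviation: it is precisely the mechanism needed to reach $\kk^{-3}$ (the paper's citation of Cauchy--Schwarz and \eqref{LinftyH1k} alone would, as you observe, only yield the insufficient $\kk^{-2}$), so it is implicitly what underlies \eqref{A4EL}.
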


\begin{proof}
We first prove \eqref{EJcauchy}.
By symmetry, we suppose that $L_1 >L_2$. Recall by \eqref{EEdefn}, \eqref{A4}, and \eqref{Ageq6} that we wrote
\begin{align*}
\mathcal{V}(\kk ,q)& = A^{[4]}(\kk ,q) - \tfrac{1}{2}A^{[4]}(\tfrac{\kk}{2} ,q)  +\sum_{j=1}^{2} \big( A_{j}^{[\geq 6]}(\kk ,q) - \tfrac{1}{2}A_{j}^{[\geq 6]}(\tfrac{\kk}{2} ,q) \big).
\end{align*}
It follows from \eqref{A4}, Cauchy-Schwarz and \eqref{LinftyH1k} that
\begin{align}
| A^{[4]}(\kk , q)| \les \kk^{-3}\|q\|_{L^{\infty}}^{4}.  \label{A4EL}
\end{align}
Similarly, from \eqref{A61-def}, \eqref{A62-def}, \eqref{2+g}, Cauchy-Schwarz, \eqref{gbd}, \eqref{gbd2}, \eqref{G2}, and \eqref{LinftyH1k},
\begin{align}
| A^{[\geq 6]}_{1}(\kk , q)| + | A^{[\geq 6]}_{2}(\kk , q)|\les \kk^{-\frac{7}{2}}  \|q\|_{L^{\infty}}^{5}(1+\|q\|_{L^{\infty}}^{5}). \label{A6EL}
\end{align}
Thus, \eqref{A4EL} and \eqref{A6EL} imply
\begin{align}
|\mathcal{V}(\kk,q)|\les  \kk^{-3} \|q\|_{L^{\infty}}^{5}(1+\|q\|_{L^{\infty}}^{5}). \label{VEL}
\end{align}
Notice that \eqref{VEL} holds for $q$ replaced by $\pi_{N}q$ for any $N\in \NB$, at which point, \eqref{EJcauchy} follows from \eqref{EMJ}.
Now, \eqref{EJcauchy} yields the existence of the limit
\begin{align*}
E(q):=\lim_{L\to \infty} E_{L}(q) = \int_{1}^{\infty} \kk^{2s} \mathcal{V}(\kk,q)d\kk,
\end{align*}
$\mu_{s}$-a.s. in view of Remark~\ref{RMK:Linfty}.
Moreover, $E(q)\in L^{p}(d\mu_{s})$ for any $1\leq p<\infty$ due to \eqref{VEL}.

It follows from \eqref{EJcauchy}, that for each fixed $N\in \NB$, there exists $\wt{E}^{(N)}\in L^{p}(d\mu_{s})$, such that
\begin{align}
\lim_{L\to \infty}  E_{L}(\pi_N q) =:  \wt{E}^{(N)}(q), \label{diffEJM1}
\end{align}
where the convergence is uniform in $N$.
We now show that $\mu_{s}$-a.s.
\begin{align}
E\circ \pi_N=\wt{E}^{(N)} \quad \text{for all} \,\, N\in \NB. \label{EequalswtE}
\end{align}
Note that $E$ in \eqref{Edefn} satisfies
\begin{align*}
\| E_{L}( q) - E(q)\|_{L^p(d\mu_{s})} \les_{p} L^{-\ta}.
\end{align*}
 Fix $N\in \NB$. By the triangle inequality and \eqref{diffEJM1}, we have
\begin{align}
\| E(\pi_N q) - \wt{E}^{(N)}(q)\|_{L^p(d\mu_{s})}  & \leq \|E(\pi_N q)-E_{J}(\pi_N q)\|_{L^p(d\mu_{s})} +CL^{-\ta} \label{EwtEdiff}.
\end{align}
Since
\begin{align*}
E(\pi_N q)-E_{L}(\pi_N q) =\int_{L}^{\infty} \kk^{2s} \mathcal{V}(\kk , \pi_{N}q)d\kk,
\end{align*}
it follows from \eqref{VEL} with $\pi_{N}q$ instead of $q$ that
\begin{align*}
\sup_{N\in \NB}  \int_{L}^{\infty} \kk^{2s}| \mathcal{V}(\kk , \pi_{N}q) | d\kk \les L^{-\ta}\|\pi_{N} q\|_{L^{\infty}}^{5}(1+\|\pi_{N}q\|_{L^{\infty}}^{5}).
\end{align*} Using this bound in \eqref{EwtEdiff} and taking $L\to \infty$ establishes \eqref{EequalswtE}, as well as the $L^p(d\mu_s)$ and almost sure convergence of $E_{L}(\pi_{N} q)$ to $E(\pi_{N} q)$, uniformly in $N\in \NB$, as $L\to \infty$. \qedhere

\end{proof}

This convergence now propagates to the densities $F_{N,L}(q)$.

\begin{lemma}\label{LEM:FMJ}
Let $\tfrac{1}{2}<s<1$ and $R>0$. Then,
\begin{align}
\lim_{L\to \infty} \|F_{N,L}(q) - F_{N}(q)\|_{L^{p}(d\mu_{s})}=0,
\qquad \text{where} \qquad
F_{N}(q) := \ind_{\{ \|q\|_{L^2}^{2}\leq R\}} e^{-E(\pi_{N}q)} \label{FM}
\end{align}
and $E$ given in Lemma~\ref{LEM:EJM}. Moreover, we have
\begin{align}
\sup_{N\in \NB} \| F_{N}(q)\|_{L^{p}(d\mu_{s})}\leq C<\infty. \label{FMunifbd}
\end{align}
\end{lemma}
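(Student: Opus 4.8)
The plan is to obtain both assertions cheaply from the two substantive results already in hand, namely the uniform $L^p$ bound of Proposition~\ref{PROP:FMJL} and the convergence $E_L(\pi_N q)\to E(\pi_N q)$ of Lemma~\ref{LEM:EJM}; no new estimate on the density itself is needed.

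First I would establish the uniform bound \eqref{FMunifbd}. Since Lemma~\ref{LEM:EJM} gives $E_L(\pi_N q)\to E(\pi_N q)$ $\mu_s$-almost surely, continuity of $x\mapsto e^{-x}$ yields $F_{N,L}(q)\to F_N(q)$ $\mu_s$-a.s.\ along any sequence $L_k\to\infty$. Fatou's lemma combined with the uniform bound $\sup_{N,L}\|F_{N,L}\|_{L^p(d\mu_s)}\leq C_0$ from Proposition~\ref{PROP:FMJL} then gives $\|F_N\|_{L^p(d\mu_s)}\leq C_0$, uniformly in $N\in\NB$, which is exactly \eqref{FMunifbd}.

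Next I would prove the convergence \eqref{FM}. Fixing $N$, the key pointwise estimate is the elementary exponential inequality $|e^{-a}-e^{-b}|\leq |a-b|\max(e^{-a},e^{-b})$ (from the mean value theorem), applied with $a=E_L(\pi_N q)$ and $b=E(\pi_N q)$; multiplying by the common cutoff $\ind_{\{\|q\|_{L^2}^2\leq R\}}$ converts the right-hand exponentials into the densities themselves, giving
\[
|F_{N,L}(q)-F_N(q)|\leq |E_L(\pi_N q)-E(\pi_N q)|\,\max(F_{N,L}(q),F_N(q)).
\]
I would then split via H\"older's inequality with exponents $\tfrac{1}{p_1}+\tfrac{1}{p_2}=\tfrac1p$: the factor $\|\max(F_{N,L},F_N)\|_{L^{p_2}(d\mu_s)}\leq \|F_{N,L}\|_{L^{p_2}}+\|F_N\|_{L^{p_2}}$ is bounded by $2C_0$ uniformly in $L$ (and $N$) by Proposition~\ref{PROP:FMJL} and \eqref{FMunifbd}, while the remaining factor $\|E_L(\pi_N q)-E(\pi_N q)\|_{L^{p_1}(d\mu_s)}\to 0$ as $L\to\infty$ (at rate $L^{-\ta}$) by Lemma~\ref{LEM:EJM}. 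This forces $\|F_{N,L}-F_N\|_{L^p(d\mu_s)}\to 0$.

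I do not anticipate a genuine obstacle here, as the difficult integrability work is entirely deferred to Proposition~\ref{PROP:FMJL} and Lemma~\ref{LEM:EJM}. The only points to handle with care are ensuring the exponential inequality is multiplied by the shared indicator (so the bound is controlled by $\max(F_{N,L},F_N)$ rather than by a bare exponential that need not lie in any $L^p$), and choosing the H\"older exponents $p_1,p_2$ so that both factors fall within the full range $1\leq p<\infty$ covered by the earlier results---which poses no issue since those hold for every finite exponent. Since the rate in Lemma~\ref{LEM:EJM} is uniform in $N$, the argument in fact delivers convergence uniform in $N$, though only the fixed-$N$ statement is required.
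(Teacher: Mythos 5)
Your proof is correct, and it rests on exactly the same two ingredients as the paper's: the uniform bound of Proposition~\ref{PROP:FMJL} and the convergence statements of Lemma~\ref{LEM:EJM}. The mechanism, however, is genuinely different. The paper proves \eqref{FM} first, by the standard soft argument (almost sure convergence of the densities together with the uniform $L^{2p}(d\mu_s)$-bounds gives uniform integrability of $|F_{N,L}-F_N|^p$, hence $L^p$-convergence; this is the argument of \cite[Proposition 6.2]{OTz} which the paper cites), and only then deduces the uniform bound \eqref{FMunifbd} from this convergence. You reverse the order: Fatou's lemma yields \eqref{FMunifbd} directly from the almost sure convergence and Proposition~\ref{PROP:FMJL}, and you then obtain \eqref{FM} quantitatively via the mean value inequality $|e^{-a}-e^{-b}|\le |a-b|\max(e^{-a},e^{-b})$, the shared $L^2$-cutoff, and H\"older's inequality with exponents $\tfrac{1}{p_1}+\tfrac{1}{p_2}=\tfrac1p$, using the $L^{p_1}(d\mu_s)$-rate $\les_{p_1} L^{-\ta}$ from Lemma~\ref{LEM:EJM} rather than only the almost sure convergence. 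Your route is in fact slightly stronger than what the lemma asserts: it produces an explicit rate $\|F_{N,L}-F_N\|_{L^p(d\mu_s)}\les L^{-\ta}$, uniform in $N\in\NB$, whereas the paper's soft argument gives convergence without a rate; it also avoids any appeal to Egorov- or Vitali-type theorems. The two points you flag as delicate are handled correctly: multiplying the mean value bound by the common indicator makes the majorant $\max(F_{N,L},F_N)$ a genuine density lying in every $L^{p_2}(d\mu_s)$ (by Proposition~\ref{PROP:FMJL} and your Fatou step), and since both earlier results hold for all finite exponents, the choice of $p_1,p_2$ is unconstrained.
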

\begin{proof}
The proof is a standard consequence of the uniform bound in Proposition~\ref{PROP:FMJL} and the almost sure convergence of $E_{L}(\pi_N q)$ to $E(\pi_N q)$ as $L\to \infty$, which was proven in Lemma~\ref{LEM:EJM}. See for example \cite[Proof of Proposition 6.2]{OTz}.
The uniform bound \eqref{FMunifbd} now follows from the convergence in \eqref{FM} and the uniform bound from Proposition~\ref{PROP:FMJL}.
\end{proof}

The final step is taking $N\to \infty$, which requires an additional convergence result.

\begin{lemma}\label{LEM:EMconv}
Let $\tfrac{1}{2}<s<1$ and $R>0$. Then, there exists $\ta'>0$ such that
\begin{align}
\|E(\pi_{N_1}q)-E(\pi_{N_2}q)\|_{L^{p}(d\mu_{s})} \les_{p, s}(N_1\wedge N_2)^{-\ta'} \label{EM1M2}
\end{align}
for all $1\leq p<\infty$, and $N_1,N_2\in \NB$. In particular, $E$ defined in \eqref{Edefn} satisfies
\begin{align}
\| E(\pi_{N}q)-E(q)\|_{L^{p}(d\mu_{s})} \les_{p, s} N^{-\ta'}. \label{EpiMbd}
\end{align}
\end{lemma}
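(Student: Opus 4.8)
The plan is to reduce \eqref{EM1M2} to a pointwise-in-$q$ (i.e.\ $\mu_{s}$-almost sure) estimate of the form
\[
|E(\pi_{N_1}q)-E(\pi_{N_2}q)| \les (N_1\wedge N_2)^{-\ta'}\,P(q),
\]
where $\ta'=s-\tfrac12-\eps>0$ for $\eps>0$ small (depending on $s$), and $P(q)$ is a polynomial in $\|q\|_{H^{s-\frac12-\eps}}$ and in the $L^\infty$-norms of $\pi_{N_1}q,\pi_{N_2}q$. All of these have finite moments of every order under $\mu_{s}$, uniformly in $N$, by Lemma~\ref{LEM:Dr}, Remark~\ref{RMK:Linfty}, and \eqref{mbd}; hence taking $L^p(d\mu_{s})$-norms of the display yields \eqref{EM1M2}. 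Assume without loss that $N_1>N_2$ and set $D:=(\pi_{N_1}-\pi_{N_2})q$, which is supported on frequencies $|\xi|>2\pi N_2$. The single mechanism driving the whole estimate is the frequency-localization gain $\|D\|_{H^\sigma}\les N_2^{-(s-\frac12-\eps-\sigma)}\|q\|_{H^{s-\frac12-\eps}}$ for $0\le\sigma<s-\tfrac12-\eps$, combined with the a priori $\kk$-decay already recorded in \eqref{A4EL}, \eqref{A6EL}, and Lemma~\ref{LEM:Ageq6}, which guarantees convergence of the $\kk$-integral once weighted by $\kk^{2s}$.

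For the quartic part of $\mathcal V$ (see \eqref{EEdefn}, \eqref{A4}), I would expand $A^{[4]}(\kk,\pi_{N_1}q)-A^{[4]}(\kk,\pi_{N_2}q)$ by multilinearity into four terms, each having exactly one argument equal to $D$ and the others equal to $\pi_{N_1}q$ or $\pi_{N_2}q$, and estimate each via \eqref{A4general}. When $D$ occupies an $L^2$-slot I use $\|D\|_{L^2}\les N_2^{-\ta'}\|q\|_{H^{s-\frac12-\eps}}$ together with the convolution bound $\|\tfrac{\pi_{N_i}q}{2\kk\pm\dd}\|_{L^\infty}\les\kk^{-1}\|\pi_{N_i}q\|_{L^\infty}$ for the two $L^\infty$-slots, exactly as in \eqref{A4EL}, leaving the decay $\kk^{-3}$. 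When $D$ occupies an $L^\infty$-slot $\tfrac{D}{2\kk\pm\dd}$ I instead invoke Lemma~\ref{LEM:interp} with a parameter $\sigma\in(2s-\tfrac32,\,s-\tfrac12-\eps)$ --- a nonempty range precisely because $s<1$ --- which gives $\|\tfrac{D}{2\kk\pm\dd}\|_{L^\infty}\les\kk^{-\sigma-\frac12}N_2^{-(s-\frac12-\eps-\sigma)}\|q\|_{H^{s-\frac12-\eps}}$ and keeps the $\kk$-integral convergent while retaining a positive power of $N_2$. In all cases the $\tfrac\kk2$-piece $A^{[4]}(\tfrac\kk2,\cdot)$ is handled identically since $\tfrac\kk2\geq\tfrac12$, and $\int_1^\infty\kk^{2s}\kk^{-3}\,d\kk<\infty$ for $s<1$.

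The higher-order terms $A^{[\geq6]}_{1},A^{[\geq6]}_{2}$ from \eqref{A61-def}--\eqref{A62-def} are the main obstacle, since the factors $\tfrac{1}{2+\g(\kk)}$, $\g(\kk)$ and $\g^{[\geq4]}(\kk)$ are not polynomial in $q$, so multilinearity is unavailable. I would again telescope the difference across every $q$-dependent factor, replacing $\pi_{N_1}q$ by $\pi_{N_2}q$ one factor at a time, and estimate each single-factor difference by the matching bound: \eqref{g lip} for $\g$, \eqref{g lip4} for $\g^{[\geq4]}$, the identity $\tfrac{1}{2+\g_1}-\tfrac{1}{2+\g_2}=\tfrac{\g_2-\g_1}{(2+\g_1)(2+\g_2)}$ controlled by \eqref{2+g} and \eqref{LinftyH1k}, the explicit bilinear structure \eqref{G2} of $\g^{[2]}$ treated as in the quartic step, and the bare $q$-factor differences $D$. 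Every such difference again produces the gain $\|D\|_{L^2}\les N_2^{-\ta'}\|q\|_{H^{s-\frac12-\eps}}$, while the remaining factors are bounded using \eqref{2+g}, \eqref{gbd}, \eqref{gbd2}, \eqref{LinftyH1k}, and Lemma~\ref{LEM:interp}; the baseline decay here is the $\kk^{-7/2}$ of \eqref{A6EL} and Lemma~\ref{LEM:Ageq6}, which leaves ample room to absorb the loss from any $L^\infty\to H^\sigma$ or $H^1_\kk\to L^\infty$ trade and still have a convergent $\kk$-integral for $s<1$. Assembling the quartic and higher-order estimates gives the pointwise bound, and hence \eqref{EM1M2}.

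Finally, \eqref{EpiMbd} follows by letting $N_1\to\infty$ in \eqref{EM1M2} with $N_2=N$. Indeed, \eqref{EM1M2} shows that $(E(\pi_N q))_{N}$ is Cauchy in $L^p(d\mu_{s})$, hence convergent there; since $\pi_N q\to q$ in $L^\infty$ $\mu_{s}$-a.s.\ (the sample $q$ being a.s.\ H\"older continuous, so its Fourier partial sums converge uniformly), dominated convergence in $\kk$ using the bound \eqref{VEL} gives $E(\pi_N q)\to E(q)$ $\mu_{s}$-a.s., which identifies the $L^p$-limit as $E(q)$. Passing to the limit $N_1\to\infty$ in \eqref{EM1M2} then yields $\|E(\pi_N q)-E(q)\|_{L^p(d\mu_{s})}\les_{p,s}N^{-\ta'}$.
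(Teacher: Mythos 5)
Your proposal is correct, and its skeleton matches the paper's: telescope the difference of $\mathcal{V}(\kk,\cdot)$ across its quartic and higher-order pieces using \eqref{A4}, \eqref{A61-def}--\eqref{A62-def}, the Lipschitz bounds \eqref{g lip}, \eqref{g lip4} and \eqref{2+g}, then take $L^p(d\mu_s)$-moments. Where you genuinely diverge is in \emph{how the power of $N_1\wedge N_2$ is produced}. The paper keeps every deterministic difference estimate Lipschitz in $\|\pi_{N_1}q-\pi_{N_2}q\|_{L^\infty}$ with a uniform $\kk^{-3}$ decay (its bounds \eqref{A4diff}, \eqref{A61diff}, \eqref{A62diff}), so the $\kk$-integral converges outright since $2s-3<-1$, and then extracts the entire $N$-gain from a single stochastic estimate for the Gaussian series \eqref{randomfourier}:
\begin{align*}
\big\| \|\pi_{N_1}q-\pi_{N_2}q\|_{L^{\infty}} \big\|_{L^{2p}(d\mu_{s})} \les \bigg( \sum_{|\xi| > 2\pi(N_1 \wedge N_2)} \jb{\xi}^{-2s+\dl} \bigg)^{\frac12} \les (N_1\wedge N_2)^{-s+\frac12+\frac{\dl}{2}},
\end{align*}
exactly as in the computation behind Lemma~\ref{LEM:Dr}. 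You instead extract the gain deterministically, placing $D=(\pi_{N_1}-\pi_{N_2})q$ in one slot at a time and applying Bernstein together with Lemma~\ref{LEM:interp} to $D$, with randomness used only to control moments of the polynomial weights. This is a real tradeoff: your route yields a pathwise estimate valid for every fixed $q\in H^{s-\frac12-\eps}(\T)$, but it requires the bookkeeping of the window $\sigma\in(\max(0,2s-\tfrac32),\,s-\tfrac12-\eps)$ balancing $\kk$-decay against $N$-gain, which the paper's version avoids entirely. Two minor points. First, the exponent you announce, $\ta'=s-\tfrac12-\eps$, is only what your argument delivers for $s<\tfrac34$; for $s\in[\tfrac34,1)$ the constraint $\sigma>2s-\tfrac32$ caps $\ta'$ at roughly $1-s$. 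Since the lemma asserts only the existence of some $\ta'>0$, this is harmless, but the opening claim should be softened accordingly. Second, your derivation of \eqref{EpiMbd} (Cauchy in $L^p(d\mu_s)$, identification of the limit via a.s.\ uniform convergence of the Fourier partial sums of the a.s.\ H\"older-continuous samples, and dominated convergence in $\kk$ via \eqref{VEL}) is correct and in fact supplies detail that the paper compresses into a one-line passage to the limit.
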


Assuming the validity of Lemma~\ref{LEM:EMconv}, we now complete the construction of the measures $\rho_{s,R}$. Indeed, arguing similarly as in the proof of Lemma~\ref{LEM:FMJ}, \eqref{EpiMbd} implies that $E(\pi_{N}q)$ converges to $E(q)$, almost surely with respect to $\mu_{s}$. Combining this with the uniform bound in \eqref{FMunifbd}, shows that
\begin{align}
\lim_{N\to \infty} \| F_{N}(q) -F(q)\|_{L^p(d\mu_{s})} =0 \label{LpconvFM}
\end{align}
for any $1\leq p<\infty$. Thus, the partition function
$Z_{s,R}:=\E_{\mu_{s}}[ F(q)]$
is finite and the measure
\begin{align*}
d\mu_{s,R} : = Z_{s,R}^{-1}F(q)d\mu_{s}
\end{align*}
exists as a probability measure which is absolutely continuous with respect to the Gaussian measure $\mu_{s}$. Moreover, we have equivalence of $\rho_{s,R}$ with the measure in \eqref{gauss-rest} as we did not make use of the sign out the front of $E(q)$ in \eqref{Edefn}.

It remains to prove Lemma~\ref{LEM:EMconv}.

\begin{proof}[Proof of Lemma~\ref{LEM:EMconv}]

Given $q_1,q_2 \in L^{\infty}(\T)$ and $\kk\geq 1$, we first consider the differences
 \begin{align*}
&\mathcal{V}(\kk,q_1)-\mathcal{V}(\kk,q_2) \notag \\
&  =A^{[4]}(\kk,q_1)-A^{[4]}(\kk, q_2)- \tfrac{1}{2}(A^{[4]}(\tfrac{\kk}{2},q_1)-A^{[4]}(\tfrac{\kk}{2}, q_2)  )  \\
& \hphantom{X} + \sum_{j=1}^{2} \big\{ A^{[\geq 6]}_{j}(\kk,q_1)-\tfrac{1}{2}A^{[\geq 6]}_{j}(\kk, q_2) \big\}-\frac{1}{2}\sum_{j=1}^{2} \big\{ A^{[\geq 6]}_{j}(\tfrac{\kk}{2},q_1)-\tfrac{1}{2}A^{[\geq 6]}_{j}(\tfrac{\kk}{2}, q_2) \big\}.
\end{align*}
It follows from \eqref{A4} and Cauchy-Schwarz that we have
\begin{align}
\begin{split}
|A^{[4]}(\kk,q_1)-A^{[4]}(\kk,q_2)| \les \kk^{-3} \|q_1-q_2\|_{L^\infty}(\|q_1\|_{L^{\infty}}^{3}+\|q_2\|_{L^{\infty}}^{3}).
\end{split} \label{A4diff}
\end{align}
Next, we have
\begin{multline*}
A^{[\geq 6]}_{1}(\kk,q_1)-A^{[\geq 6]}_{1}(\kk,q_2) = \int_{\T} \tfrac{\kk( \g_{2}(\kk)-\g_1(\kk) )}{(2+\g_{1}(\kk))(2+\g_{1}(\kk))}\g_{1}(\kk) Q(\kk,q_1)dx \\+  \int_{\T} \tfrac{\kk( \g_{2}(\kk)-\g_1(\kk) )}{2+\g_{1}(\kk)}\g_{1}(\kk) Q(\kk,q_1)dx
+\int_{\T} \tfrac{\kk \g_{2}(\kk)}{2+\g_{2}(\kk)}\big\{ Q(\kk,q_1)-Q(\kk,q_2)\big\}dx,
\end{multline*}
where we defined $\g_{j}(\kk):=\g(\kk,q_j)$, $j=1,2$, and
\begin{align*}
 Q(\kk,q):=2q \mathsf{R}_{0}(2\kk)[q \g^{[2]}(\kk)]-q \g^{[2]}(\kk)  \mathsf{R}_{0}(2\kk) q.
\end{align*}
Now, it follows from \eqref{LinftyH1k} and \eqref{G2}, that
\begin{align}
\|Q(\kk,q)\|_{L^2}\les \kk^{-3}\|q\|_{L^{\infty}}^{2} ( 1+ \|q\|_{L^\infty}^2). \label{Qbound}
\end{align}

\noi
Moreover, by a similar computation to \eqref{A4diff}, we get
\begin{align}
\|Q(\kk,q_1)-Q(\kk,q_2)\|_{L^1}\les \kk^{-4}(1+\|q_1\|_{L^{\infty}}^{4}+\|q_2\|_{L^{\infty}}^{4})\|q_1-q_2\|_{L^\infty}.
 \label{Qdiff}
\end{align}
Thus, using \eqref{Qbound}, \eqref{Qdiff}, \eqref{gbd}, \eqref{g lip},  and \eqref{2+g}, we have
\begin{align}
& |A^{[\geq 6]}_{1}(\kk,q_1)-A^{[\geq 6]}_{1}(\kk,q_2)| \les \kk^{-4}\|q_1-q_2\|_{L^\infty} (1+\|q_1\|_{L^{\infty}}^{17}+\|q_{2}\|_{L^{\infty}}^{17}). \label{A61diff}
\end{align}
Moreover, from \eqref{2+g}, \eqref{gbd}, \eqref{gbd2}, and \eqref{g lip4}, we have
\begin{align}
&|A^{[\geq 6]}_{2}(\kk,q_1)-A^{[\geq 6]}_{2}(\kk,q_2)| \les \kk^{-\frac72}\|q_1-q_2\|_{L^\infty} (\|q_1\|_{L^{\infty}}^{15}+\|q_{2}\|_{L^{\infty}}^{15}). \label{A62diff}
\end{align}

Now, recalling the definition \eqref{Edefn}, we have from \eqref{A4diff}, \eqref{A61diff}, \eqref{A62diff}, Cauchy-Schwarz, \eqref{YLinfty}, \eqref{randomfourier}, and \eqref{mbd}, that
\begin{align*}
 \|E(\pi_{N_1}q)-E(\pi_{N_2}q)\|_{L^{p}(d\mu_{s})}   &\les    \big\| \|\pi_{N_1}q-\pi_{N_2}q\|_{L^{\infty}} (1+\|q\|_{L^{\infty}}^{C}  )\big\|_{L^{p}(d\mu_{s})} \\
& \les \big\| \|\pi_{N_1}q-\pi_{N_2}q\|_{L^{\infty}}  \big\|_{L^{2p}(d\mu_{s})} \\
& \les \bigg( \sum_{|\xi| > 2\pi(N_1 \wedge N_2)} \frac{1}{\jb{\xi}^{2s-\dl}} \bigg)^\frac12 \\
& \les (N_1\wedge N_2)^{-s+\frac12 + \frac\dl2},
\end{align*}
where $C\in\NB$ is a large enough power,
since $s>\tfrac{1}{2}$, and where $\dl>0$ is sufficiently small. This proves \eqref{EM1M2}, and \eqref{EpiMbd} follows by taking $N_1 \wedge N_2 \to \infty$ in \eqref{EM1M2}.
\end{proof}

\subsection{Equivalence of base Gaussian measures}\label{SEC:equiv}

We now show that the Gaussian measures $\mu_{s}$ in \eqref{gauss0} on $L^2(\T)$ are equivalent to the Gaussian measures $\wt{\mu}_{s}$ in \eqref{mutilde}.
 Rigorously, we understand $\wt{\mu}_{s}$ as the law of the random Fourier series
\begin{align*}
\o\in \O \mapsto \wt{q}(x;\o) =\frac{1}{\sqrt{C_{s}} } \sum_{\xi\in 2\pi\Z} \frac{g_{\xi}(\o)}{\jb{\xi}^{s}}e^{i\xi x},
\end{align*}
where $(\O,\mathcal{F},\mathbb{P})$ is the same underlying probability space as for \eqref{randomfourier} and  $C_{s}>0$ is the constant from \eqref{Kakutanidiff}. The Gaussian measures $\wt{\mu}_s$ are more commonly used in the study of nonlinear dispersive PDE with random initial data or stochastic forcing.
If one wishes for there to not be the constant $(\sqrt{C_{s}})^{-1}$ in the expansion \eqref{randomfourier}, then one instead needs to insert it appropriately into \eqref{gauss0}. The effect of this would be to alter the inverse temperature for the invariant measures \eqref{GibbsEmeas}, which has no affect on their construction.

\begin{proposition} \label{PROP:equiv}
For every $\tfrac{1}{2}<s<1$, the Gaussian measure $\mu_{s}$ in \eqref{gauss0} is equivalent to the Gaussian measure $\wt{\mu}_{s}$ in \eqref{mutilde}.
\end{proposition}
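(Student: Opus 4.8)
The plan is to recognize both $\mu_{s}$ and $\tilde\mu_{s}$ as infinite product measures of finite-dimensional Gaussians, diagonal in the Fourier basis, and to invoke Kakutani's dichotomy theorem for product measures. Indeed, under $\mu_{s}$ the Fourier coefficient $\ft q(\xi)$ is a centered Gaussian with variance $\langle\mathfrak{m}_{s}(\xi)\rangle^{-2}$, whereas under $\tilde\mu_{s}$ it has variance $C_{s}^{-1}\langle\xi\rangle^{-2s}$; in both cases the coefficients are mutually independent subject only to the reality constraints $g_{-\xi}=\cj{g_{\xi}}$ and $g_{0}\in\R$, so the measures factor over $\{0\}\cup\{\xi\in2\pi\Z:\xi>0\}$. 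Since any two nondegenerate centered Gaussians on $\R$ (or $\C$) are mutually equivalent, Kakutani's theorem yields the dichotomy that the product measures are either equivalent or mutually singular, with equivalence holding exactly when the product of the Hellinger affinities of the marginals is strictly positive.

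Recalling that for centered one-dimensional Gaussians with variances $a$ and $b$ the Hellinger affinity is $H=\bigl(2\sqrt{ab}/(a+b)\bigr)^{1/2}$, and that $\prod_{\xi}H_{\xi}>0$ if and only if $\sum_{\xi}(1-H_{\xi})<\infty$, the equivalence will reduce to the square-summability condition
\[
\sum_{\xi\in2\pi\Z}\Bigl(\frac{\langle\mathfrak{m}_{s}(\xi)\rangle^{2}}{C_{s}\langle\xi\rangle^{2s}}-1\Bigr)^{2}<\infty .
\]
This is precisely where \eqref{Kakutanidiff} enters (and is, presumably, the reason for its name). Writing the numerator of each summand as $\langle\mathfrak{m}_{s}(\xi)\rangle^{2}-C_{s}\langle\xi\rangle^{2s}=1+\mathfrak{m}_{s}(\xi)^{2}-C_{s}\langle\xi\rangle^{2s}$, I would substitute $\mathfrak{m}_{s}(\xi)^{2}=C_{s}|\xi|^{2s}+O(|\xi|^{-2})$ from \eqref{Kakutanidiff} together with the elementary expansion $\langle\xi\rangle^{2s}-|\xi|^{2s}=O(|\xi|^{2s-2})$, so that the numerator equals $1+O(|\xi|^{2s-2})=O(1)$ for large $|\xi|$. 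Dividing by $C_{s}\langle\xi\rangle^{2s}\sim|\xi|^{2s}$ then shows each summand is $O(|\xi|^{-4s})$, and $\sum_{\xi}|\xi|^{-4s}<\infty$ precisely because $s>\tfrac12>\tfrac14$.

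In carrying this out, I would first set up the product structure and the exact Kakutani criterion carefully, noting that the $\pm\xi$ pairing and the single real coordinate $g_{0}$ contribute only finitely many extra, harmless factors and do not affect convergence. The elementary asymptotics above then close the argument. The \emph{main obstacle} is simply to control $\langle\mathfrak{m}_{s}(\xi)\rangle^{2}-C_{s}\langle\xi\rangle^{2s}$ with a decay rate good enough for summability after dividing by $\langle\xi\rangle^{2s}$ and squaring; this is supplied almost verbatim by \eqref{Kakutanidiff}, whose error $|\xi|^{-2}$ is in fact far sharper than needed. The binding constraint comes entirely from the leading discrepancy of order $|\xi|^{-2s}$ in the variance ratio, which is square-summable exactly for $s>\tfrac14$—a range comfortably containing $\tfrac12<s<1$, so no further structure of $\mathfrak{m}_{s}$ beyond Lemma~\ref{LEM:ms} is required.
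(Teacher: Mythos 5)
Your proposal is correct and follows essentially the same route as the paper: both identify the two measures as Fourier-diagonal Gaussian products, invoke Kakutani's dichotomy (the paper packages the criterion as Lemma~\ref{LEM:Kak}, you derive it via Hellinger affinities --- the same square-summability condition on the variance ratio), and verify that condition using \eqref{Kakutanidiff} together with the elementary bound $\bigl|\jb{\xi}^{2s}-|\xi|^{2s}\bigr|\lesssim |\xi|^{2(s-1)}$, with convergence holding exactly because $s>\tfrac14$. There is no gap; your treatment of the reality constraints ($\pm\xi$ pairing and the real coordinate $g_0$) matches the paper's explicit splitting into independent real Gaussians.
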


The poof of Proposition~\ref{PROP:equiv} employs Kakutani's theorem \cite{Kak}, known as the Feldman-H\'{a}jek theorem in the Gaussian setting~\cite{Feldman, Hajek}. See, for instance, \cite[Proposition B.1]{BTz} for a use of this theorem in the context of nonlinear dispersive PDEs, as well as \cite{LOZ} for further references.
We will use the following statement, tailored for our context, whose proof can be found in \cite[Lemma 3.2]{LOZ}.

\begin{lemma}\label{LEM:Kak}
Let $\{A_{\xi}\}_{\xi\in 2\pi \Z}$ and $\{B_{\xi}\}_{\xi\in 2\pi\Z}$ be two sequences of independent, real-valued mean-zero Gaussian random variables with $\E[A_{\xi}^2]=a(\xi)>0$ and $\E[B_{\xi}^{2}]=b(\xi)>0$ for all $\xi\in 2\pi \Z$. Then, the laws of $\{A_{\xi}\}_{\xi\in 2\pi \Z}$ and $\{B_{\xi}\}_{\xi\in 2\pi\Z}$ are equivalent if and only if
\begin{align}
\sum_{\xi\in 2\pi \Z} \bigg( \frac{a(\xi)}{b(\xi)}-1\bigg)^{2}<\infty. \label{equivsum}
\end{align}
If they are not equivalent, then they are singular.
\end{lemma}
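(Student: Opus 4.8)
The plan is to prove the classical Kakutani dichotomy via the Hellinger affinity and then to reduce the resulting product criterion to the series \eqref{equivsum}. First I would realize the two laws as product measures $\mu=\bigotimes_{\xi}\mu_{\xi}$ and $\nu=\bigotimes_{\xi}\nu_{\xi}$ on $\R^{2\pi\Z}$ equipped with the product $\sigma$-algebra, where $\mu_{\xi}$ and $\nu_{\xi}$ denote the centered Gaussian laws on $\R$ with variances $a(\xi)>0$ and $b(\xi)>0$, and $p_{\xi},q_{\xi}$ their densities. Each pair $\mu_{\xi},\nu_{\xi}$ is mutually absolutely continuous, so every finite-coordinate marginal of $\mu$ is equivalent to that of $\nu$. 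Exhausting $2\pi\Z$ by the finite sets $\{\,|\xi|\leq n\,\}$ and writing $Z_{n}=\prod_{|\xi|\leq n}\frac{d\mu_{\xi}}{d\nu_{\xi}}(x_{\xi})$ for the likelihood ratio of the first $n$ coordinates, the sequence $(Z_{n})_{n}$ is a nonnegative martingale under $\nu$ with $\E_{\nu}[Z_{n}]=1$; hence $Z_{n}\to Z_{\infty}$ holds $\nu$-almost surely by the martingale convergence theorem. The whole statement reduces to deciding whether this convergence is in $L^{1}(\nu)$, in which case $d\mu=Z_{\infty}\,d\nu$, or whether $Z_{\infty}=0$ $\nu$-a.s., in which case the measures are singular.

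The decisive quantity is the Hellinger affinity $\rho_{\xi}:=\int_{\R}\sqrt{p_{\xi}(x)q_{\xi}(x)}\,dx\in(0,1]$, which by independence of the coordinates satisfies $\E_{\nu}[\sqrt{Z_{n}}]=\prod_{|\xi|\leq n}\rho_{\xi}$. A direct Gaussian integration yields the explicit value
\begin{align*}
\rho_{\xi}=\bigg(\frac{2\sqrt{a(\xi)b(\xi)}}{a(\xi)+b(\xi)}\bigg)^{\!1/2}, \qquad\text{so that}\qquad 1-\rho_{\xi}^{2}=\frac{\big(\sqrt{a(\xi)}-\sqrt{b(\xi)}\big)^{2}}{a(\xi)+b(\xi)}.
\end{align*}
I would then record the elementary product identity $\E_{\nu}[(\sqrt{Z_{n}}-\sqrt{Z_{m}})^{2}]=2-2\prod_{m<|\xi|\leq n}\rho_{\xi}$ for $m<n$, which follows from $\E_{\nu}[Z_{k}]=1$ together with the independence of $Z_{m}$ from $Z_{n}/Z_{m}$ under $\nu$.

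The heart of the argument, and the step I expect to demand the most care, is the dichotomy itself. If $\prod_{\xi}\rho_{\xi}>0$, then the tail products $\prod_{m<|\xi|\leq n}\rho_{\xi}$ tend to $1$, so the increment identity shows $(\sqrt{Z_{n}})_{n}$ is Cauchy in $L^{2}(\nu)$; consequently $Z_{n}\to Z_{\infty}$ in $L^{1}(\nu)$, $\E_{\nu}[Z_{\infty}]=1$, and $d\mu=Z_{\infty}\,d\nu$. Running the symmetric argument under $\mu$ with the martingale $1/Z_{n}$ (whose affinity is again $\rho_{\xi}$) gives $Z_{\infty}>0$ $\nu$-a.s., so the two measures are equivalent. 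If instead $\prod_{\xi}\rho_{\xi}=0$, then $\E_{\nu}[\sqrt{Z_{n}}]\to0$, and since $\sqrt{Z_{n}}\to\sqrt{Z_{\infty}}$ $\nu$-a.s., Fatou's lemma forces $Z_{\infty}=0$ $\nu$-a.s.; the same reasoning applied to $1/Z_{n}$ gives $Z_{n}\to\infty$ $\mu$-a.s., so the set $S=\{x:Z_{n}(x)\to\infty\}$ carries $\mu$ while $\nu(S)=0$, exhibiting $\mu\perp\nu$. This establishes that the measures are equivalent precisely when $\prod_{\xi}\rho_{\xi}>0$ and singular otherwise, which is exactly the claimed dichotomy.

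It remains to match $\prod_{\xi}\rho_{\xi}>0$ with \eqref{equivsum}. Since $\rho_{\xi}\in(0,1]$, the standard criterion for infinite products gives $\prod_{\xi}\rho_{\xi}>0\Longleftrightarrow\prod_{\xi}\rho_{\xi}^{2}>0\Longleftrightarrow\sum_{\xi}(1-\rho_{\xi}^{2})<\infty$. Writing $t_{\xi}=a(\xi)/b(\xi)$, the second displayed formula reads $1-\rho_{\xi}^{2}=(\sqrt{t_{\xi}}-1)^{2}/(t_{\xi}+1)$. Either of the two series in question forces its summands to vanish, hence $t_{\xi}\to1$, and near $t=1$ a Taylor expansion gives $(\sqrt{t}-1)^{2}/(t+1)\sim\tfrac{1}{8}(t-1)^{2}$; thus the tails of $\sum_{\xi}(1-\rho_{\xi}^{2})$ and of $\sum_{\xi}(t_{\xi}-1)^{2}$ are comparable, and the two series converge together. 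Combined with the dichotomy, this yields equivalence exactly under \eqref{equivsum} and mutual singularity otherwise, completing the proof.
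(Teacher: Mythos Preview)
Your proof is correct. The paper itself does not prove this lemma; it simply quotes the statement and refers the reader to \cite[Lemma 3.2]{LOZ} for a proof. What you have written is a complete, self-contained derivation via Kakutani's dichotomy and the Hellinger affinity, which is exactly the classical route one would expect such a reference to follow. Your computation of the Gaussian affinity $\rho_{\xi}=\big(2\sqrt{a(\xi)b(\xi)}/(a(\xi)+b(\xi))\big)^{1/2}$, the martingale $L^{2}$--Cauchy argument to upgrade a.s.\ convergence to $L^{1}$, and the final reduction of $\sum_{\xi}(1-\rho_{\xi}^{2})<\infty$ to $\sum_{\xi}(a(\xi)/b(\xi)-1)^{2}<\infty$ are all sound; in particular, your observation that convergence of either series forces $t_{\xi}=a(\xi)/b(\xi)\to 1$, after which the summands are comparable by Taylor expansion, cleanly handles both implications. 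In short, you have supplied the proof the paper outsourced.
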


\begin{proof}[Proof of Proposition~\ref{PROP:equiv}]
We rewrite \eqref{randomfourier} in the following way:
\begin{align*}
q(\o) = g_{0}(\o) + \sum_{\xi\in 2\pi \NB} \bigg(  \frac{\text{Re} \, g_{\xi}(\o)}{ \jb{\mathfrak{m}_{s}(\xi)}} \cos(\xi x) - \frac{\text{Im} \, g_{\xi}(\o)}{ \jb{\mathfrak{m}_{s}(\xi)}} \sin(\xi x) \bigg).
\end{align*}
Then, for each $\xi\in 2\pi \NB$, we set $A_0=1$, $B_0 = (\sqrt{C_s})^{-1}$, and
\begin{align*}
A_{\xi} = \frac{\text{Re} \, g_{\xi}}{ \jb{\mathfrak{m}_{s}(\xi)}}, \quad A_{-\xi} =  - \frac{\text{Im} \, g_{\xi}}{ \jb{\mathfrak{m}_{s}(\xi)}}, \quad B_{\xi} = \frac{1}{\sqrt{C_s}}\frac{\text{Re} \, g_{\xi}}{ \jb{\xi}^{s}}, \quad B_{-\xi} =  -  \frac{1}{\sqrt{C_s}}\frac{\text{Im} \, g_{\xi}}{ \jb{\xi}^{s}},
\end{align*}

\noi
Then,
\begin{align*}
a(\xi) = \E[A_{\xi}^{2}]= \frac{1}{   \jb{\mathfrak{m}_s (\xi)}^2}, \quad \text{and} \quad b(\xi)=\E[B_{\xi}^2] =\frac{1}{C_{s}\jb{\xi}^{2s}}.
\end{align*}
Now, it remains to check that \eqref{equivsum} holds for these sequences $\{ a(\xi)\}_{\xi\in 2\pi \Z}$ and $\{b(\xi)\}_{\xi \in 2\pi \Z}$. Fix $\xi \in 2\pi \Z$, $\xi\neq 0$. With the function $f(x)=x^{s}$ for $x>0$, the mean value theorem implies
\begin{align*}
|\jb{\xi}^{2s} -|\xi|^{2s}| = |f(1+|\xi|^{2})-f(|\xi|^{2})| \les |\xi|^{2(s-1)}.
\end{align*}
Thus, by \eqref{Kakutanidiff} and since $s<1$, we have
\begin{align}
|C_{s} \jb{\xi}^{2s}-\jb{\mathfrak{m}_{s}(\xi)}^{2}| &  \les 1+|\xi|^{-2}+|\xi|^{2(s-1)} \les 1. \label{diffbdKak}
\end{align}
Then, by \eqref{mbd}, \eqref{diffbdKak}, and that $s>\tfrac{1}{4}$, we have
\begin{align*}
\sum_{\xi \in 2\pi \Z} \bigg(\frac{a(\xi)}{b(\xi)}-1\bigg)^{2} \les 1+ \sum_{\substack{\xi\in 2\pi \Z \\ \xi \neq 0}} \frac{(C_{s} \jb{\xi}^{2s}-\jb{\mathfrak{m}_{s}(\xi)}^{2})^2}{\jb{\mathfrak{m}_{s}(\xi)}^{4}} \les 1+\sum_{\substack{\xi\in 2\pi \Z \\ \xi \neq 0}} \frac{1}{\jb{\xi}^{4s}} <\infty.
\end{align*}
Therefore, an application of Lemma~\ref{LEM:Kak} completes the proof.
\end{proof}

\section{Invariance} \label{SEC:invariance}

In order to establish the invariance of the measures $\rho_{s,R}$ in \eqref{Gibbs} under the $H^{\text{mKdV}}$-flow, we first establish their invariance under the $H_\vk^\text{mKdV}$-flow, by considering suitably ``truncated'' dynamics induced by $H_{\vk, N}^{\text{mKdV}}$. We then extend this result to invariance for mKdV due to the good approximation between the $H_\vk^\text{mKdV}$ and $H^\text{mKdV}$ flows (Proposition~\ref{P:HK well}). Note that we exclusively use $\vk$ to index the approximating flows $H_\vk^\text{mKdV}$ to distinguish it from the spectral parameter and integration  variable $\kk$ in \eqref{Wkk}.

Let $N\in\NB$. We define new Gaussian measures as $\mu_{s, N} = \mu_{s,N} \otimes \mu^\perp_{s, N}$, where
\begin{align*}
	d \mu_{s,N} &= Z^{-1}_N \exp\bigg( - \frac12 \sum_{|\xi| \leq 2\pi N } |\mathfrak{m}_s(\xi )|^2 |\ft{q}(\xi)|^2  - \frac12\sum_{|\xi| \leq 2\pi N} |\ft{q}(\xi)|^2\bigg) \prod_{|\xi| \leq 2\pi N} d\ft{q}(\xi) , \\
	d \mu^\perp_{s,N} &= \wt{Z}^{-1}_N \exp\bigg( - \frac12  \sum_{|\xi| > 2 \pi N } |\mathfrak{m}_s(\xi )|^2 |\ft{q}(\xi)|^2  -\frac12 \sum_{|\xi| >2 \pi N} |\ft{q}(\xi)|^2\bigg) \prod_{|\xi| > 2 \pi N} d\ft{q}(\xi) ,
\end{align*}

\noi
and define the finite dimensional measure $\wt{\rho}_{s,N}$ with density
\begin{equation*}
	d \wt{\rho}_{s,N}  = Z^{-1}_{s,N} F(\pi_N q) \, d  \mu_{s, N}(q).
\end{equation*}
Then, the ``truncated'' measures $\rho_{s,N}$ are given by $\rho_{s, N} := \wt{\rho}_{s,N} \otimes \mu^\perp_{s,N}$. Note that $\mu_{s} = \mu_{s, N} \otimes \mu^\perp_{s,N}$ for the Gaussian measures in \eqref{gauss0}.

For the truncated Hamiltonian
${H_{\vk, N}^{\text{mKdV}}(q) : = H_{\vk}^{\text{mKdV}}(\pi_N q)}$, \eqref{Hvk}~and~\eqref{derivA}~imply
\begin{align}
	\tfrac{\dl H_{\vk,N}^{\text{mKdV}}}{ \dl q}
	= \pi_N \big( 4\vk^2 \pi_N q - 4\vk^3 g_{-}(\vk, \pi_N q) \big).
	\label{Hkk-M-diff}
\end{align}

\noi
Consequently, we consider the following approximating dynamics
\begin{equation}\label{t-H-vk-flow}
	\begin{cases}
		\dt q_N  =  4\vk^2 \dx q_N - 4\vk^3 \pi_N \dx g_{-}(\vk, \pi_N q_N) , \\
		q_N (0) = q^0.
	\end{cases}
\end{equation}
Note that the solution $q_N$ to \eqref{t-H-vk-flow} decouples into $q_N = q_\text{low} + q_\text{high}$ which satisfy
\begin{align*}
	\dt q_\text{high} &= 4 \vk^2 \dx q_\text{high},\\
	\dt q_\text{low} & = 4 \vk^2 \dx q_\text{low} - 4\vk^3 \pi_N \dx g_{-}(\vk, q_\text{low}),
\end{align*}
with initial data $q_\text{high}(0)= \pi_{>N} q^0$ and $q_\text{low} (0) = \pi_N q^0$. From this, we see that the high frequencies of $q_N$ satisfy a transport equation, while the low frequencies evolve under the flow generated by the truncated Hamiltonian $H_{\vk,N}^\text{mKdV}$.

We start by establishing the global well-posedness of \eqref{t-H-vk-flow} and a good approximation property between the solutions to \eqref{t-H-vk-flow} and \eqref{Hkflow}.

\begin{lemma}\label{LM:trunc-flow-prop}
	Let $\vk\geq 1$ and $N\in\NB$. Then, the Cauchy problem \eqref{t-H-vk-flow} is globally well-posed in $L^2(\T)$. Denoting the data-to-solution map for \eqref{t-H-vk-flow} by $\Phi_{\vk, N}^{\operatorname{mKdV}}(t)$,  we have that
	\begin{align*}
		\| \Phi_{\vk, N}^{\operatorname{mKdV}}(t) \pi_N q^0 \|_{L^2} &= \| \pi_N q^0\|_{L^2} \qquad \text{and} \qquad
		\| \Phi_{\vk, N}^{\operatorname{mKdV}}(t) \pi_{>N} q^0\|_{L^2} = \|\pi_{>N} q^0\|_{L^2},
	\end{align*}
	for all $q^0\in L^2(\T)$.
	Moreover,
	for all $R_0>0$ there exists $T=T(R_0)>0$ such that
	\begin{equation}\label{approx-trunc}
		\lim_{N\to\infty}\sup_{\substack{0\leq t \leq T\\ \|q^0\|_{L^2} \leq R_0}} \| \Phi_{\vk}^\textup{mKdV}(t) q^0 - \Phi_{\vk, N}^\textup{mKdV}(t) q^0 \|_{L^2} =0.
	\end{equation}
\end{lemma}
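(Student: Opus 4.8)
The plan is to first use the decoupling of \eqref{t-H-vk-flow} into the pure transport equation for $q_{\text{high}} = \pi_{>N}q_N$, which is solved explicitly and conserves $\|q_{\text{high}}\|_{L^2}$, and the finite-dimensional system for $q_{\text{low}} = \pi_N q_N$, posed on $\pi_N L^2(\T)$. On this finite-dimensional space the vector field $u\mapsto 4\vk^2\dx u - 4\vk^3\pi_N\dx g_{-}(\vk, u)$ is Lipschitz on bounded sets by \eqref{rLip} (together with boundedness of $\pi_N\dx$ on $\pi_N L^2(\T)$), so the Cauchy--Lipschitz theorem gives local well-posedness. To globalize and to obtain the stated identities, I would show $\|q_{\text{low}}(t)\|_{L^2}$ is conserved: pairing the equation with $q_{\text{low}}$, the transport term vanishes by antisymmetry of $\dx$, while for the nonlinear term I use $\pi_N q_{\text{low}} = q_{\text{low}}$ and the identities \eqref{Grderiv} to compute
\begin{align*}
	\jb{q_{\text{low}},\, \pi_N\dx g_{-}(\vk, q_{\text{low}})}
	&= \jb{q_{\text{low}},\, \dx g_{-}(\vk, q_{\text{low}})}
	= -2\vk\int_{\T} q_{\text{low}}\, g_{+}(\vk, q_{\text{low}})\,dx \\
	&= -\vk\int_{\T} \dx\g(\vk, q_{\text{low}})\,dx = 0,
\end{align*}
the final integral vanishing by periodicity. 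This yields conservation of $\|q_{\text{low}}(t)\|_{L^2}$ and $\|q_{\text{high}}(t)\|_{L^2}$; specializing the initial datum to $\pi_N q^0$ (resp.\ $\pi_{>N}q^0$) gives the two stated identities and, via the resulting a priori bound, global well-posedness.

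For the approximation \eqref{approx-trunc}, I set $v := q - q_N$ with $q = \Phi_{\vk}^{\textup{mKdV}}(t)q^0$ and $q_N = \Phi_{\vk, N}^{\textup{mKdV}}(t)q^0$, so that $v(0)=0$ and, subtracting \eqref{Hkflow} from \eqref{t-H-vk-flow},
\[
\dt v = 4\vk^2\dx v - 4\vk^3\big(\dx g_{-}(\vk, q) - \pi_N\dx g_{-}(\vk, \pi_N q_N)\big).
\]
Testing against $v$, the transport term drops, and I decompose the nonlinear difference as
\begin{multline*}
	\dx g_{-}(\vk, q) - \pi_N\dx g_{-}(\vk, \pi_N q_N) \\
	= \underbrace{\dx[g_{-}(\vk, q) - g_{-}(\vk, q_N)]}_{T_1} + \underbrace{\dx[g_{-}(\vk, q_N) - g_{-}(\vk, \pi_N q_N)]}_{T_2} + \underbrace{(\Id - \pi_N)\dx g_{-}(\vk, \pi_N q_N)}_{T_3}.
\end{multline*}
By Proposition~\ref{P:HK well} and the conservation just established, $\|q(t)\|_{L^2} = \|q_N(t)\|_{L^2} = \|q^0\|_{L^2}\leq R_0$, so every constant below depends only on $\vk$ and $R_0$. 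The Lipschitz bound \eqref{rLip} controls $\|T_1\|_{L^2}\les_{\vk, R_0}\|v\|_{L^2}$, so its pairing with $v$ is $\les_{\vk, R_0}\|v\|_{L^2}^2$ and is harmless for Gronwall. For $T_3$, the regularity bound \eqref{rbdL} gives $\|(\Id - \pi_N)\dx g_{-}(\vk, \pi_N q_N)\|_{L^2}\les N^{-1}\|g_{-}(\vk, \pi_N q_N)\|_{H^2}\les_{\vk, R_0}N^{-1}$.

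The main obstacle is $T_2 = \dx[g_{-}(\vk, q_N) - g_{-}(\vk, \pi_N q_N)]$, which records the change in $g_{-}$ upon deleting the high-frequency part $\pi_{>N}q_N$ of its argument. The blunt Lipschitz estimate \eqref{rLip} bounds this only by $\|\pi_{>N}q_N\|_{L^2} = \|\pi_{>N}q^0\|_{L^2}$, which is \emph{not} uniformly small over $\|q^0\|_{L^2}\leq R_0$; this is precisely where uniformity over the ball could fail. To rescue it, I would exploit the smoothing hard-wired into the formulas for $g_{-}$: by \eqref{prformula} and \eqref{G2}, the argument enters $g_{-}$ only through the resolvents $\mathsf{R}_0(2\vk)$ and $(2\vk\pm\dd)^{-1}$, whose multipliers decay like $|\xi|^{-2}$ and $|\xi|^{-1}$, so the frequencies $|\xi|>2\pi N$ of the argument are damped by a factor $N^{-1}$. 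Propagating this gain through the nonlinear terms $q\,\g(\vk, q)$ and the expansion of $(2+\g(\vk))^{-1}$, using the quadratic and Lipschitz bounds of Proposition~\ref{PROP:Gamma} (in particular \eqref{g lip}, \eqref{g lip4}), should give
\[
\sup_{\|u\|_{L^2}\leq R_0}\big\|\dx[g_{-}(\vk, u) - g_{-}(\vk, \pi_N u)]\big\|_{L^2}\les_{\vk, R_0} N^{-\ta}\to 0
\]
as $N\to\infty$, for some $\ta>0$. Granting this, $\|T_2\|_{L^2}\les_{\vk, R_0}N^{-\ta}$, and the energy identity becomes $\tfrac{d}{dt}\|v\|_{L^2}^2\les_{\vk, R_0}\|v\|_{L^2}^2 + \|v\|_{L^2}\,N^{-\ta}$. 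Since $v(0)=0$, Gronwall's inequality yields $\sup_{0\leq t\leq T}\|v(t)\|_{L^2}\les_{T, \vk, R_0}N^{-\ta}$, uniformly over $\|q^0\|_{L^2}\leq R_0$, which gives \eqref{approx-trunc} for any fixed $T$ (so $T=T(R_0)$ may be taken arbitrary). The technical heart of the lemma is therefore the uniform smoothing bound for $T_2$.
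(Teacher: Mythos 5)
Your first part (global well-posedness and the two conservation identities) is exactly the paper's argument: decoupling \eqref{t-H-vk-flow} into the transport equation for $\pi_{>N}q_N$ and the low-frequency flow, Cauchy--Lipschitz via \eqref{rLip}, and $L^2$-conservation of the low part by pairing with $q_{\text{low}}$, using $\pi_N q_{\text{low}}=q_{\text{low}}$, the identities \eqref{Grderiv}, and periodicity. Your second part is also, in substance, the paper's argument, written in differential (Gronwall) rather than integral (Duhamel) form: your $T_1$ is controlled by \eqref{rLip} exactly as in the paper, your $T_3$ by Bernstein together with \eqref{rbdL} exactly as in the paper, and the paper meets your $T_2$ as well --- there it enters through the triangle inequality $\|\Phi_\vk^{\textup{mKdV}}(t')q^0-\Phi_{\vk,N}^{\textup{mKdV}}(t')\pi_Nq^0\|_{L^2}\leq\|\Phi_\vk^{\textup{mKdV}}(t')q^0-\Phi_{\vk,N}^{\textup{mKdV}}(t')q^0\|_{L^2}+\|\pi_{>N}q^0\|_{L^2}$, and the paper's final bound is $\les_{T,\vk,R_0}N^{-1}+\|\pi_{>N}q^0\|_{L^2}$, which is precisely your ``blunt'' bound for $T_2$.

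Where you diverge is in refusing to stop there, and this is where your proposal has a genuine hole: the uniform smoothing estimate $\sup_{\|u\|_{L^2}\leq R_0}\|\dx[g_-(\vk,u)-g_-(\vk,\pi_Nu)]\|_{L^2}\les_{\vk,R_0}N^{-\ta}$ is asserted (``should give'', ``granting this''), not proven, and it does not follow from the estimates you cite. The resolvents damp high frequencies of their \emph{argument}, but the products inside $g_-$ redistribute frequencies: for the piece $\pi_N u\,[\g(\vk,u)-\g(\vk,\pi_N u)]$ the cited bounds \eqref{g lip} and \eqref{g lip4} yield only $\|\pi_{>N}u\|_{L^2}$ with no gain in $N$, so one would need a frequency-localized refinement of the whole resolvent analysis behind Proposition~\ref{PROP:Gamma}; for the piece $\pi_{>N}u\cdot\g(\vk,u)$ a gain of $N^{-1/2}$ is indeed available (low output frequencies force $\g$ onto frequencies $\gtrsim N$, where its $H^1_\vk$ bound helps), but none of this is carried out, in the paper or by you. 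You should be aware, however, that the paper's own proof does not establish uniformity over the ball either: its final bound also contains $\|\pi_{>N}q^0\|_{L^2}$, and \eqref{approx-trunc} is only ever invoked through \eqref{approx-sets}, i.e.\ on sets $K\cap B(R)$ with $K$ compact, where $\sup_{q^0\in K}\|\pi_{>N}q^0\|_{L^2}\to0$ as $N\to\infty$. So the portion of your argument that is rigorous proves exactly what the paper proves and exactly what the invariance argument in Lemma~\ref{LM:invariant-vk-compact} needs; the unproven smoothing claim would, if established, upgrade the lemma to its literal ball-uniform statement, but as written it is a gap at what you yourself call the technical heart.
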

\begin{proof}

	Let $\Phi_{\vk,N}^\text{mKdV}(t) q^0= q_\text{low}(t) + q_\text{high}(t)$. For the high frequencies, there exists a unique global solution given by $q_\text{high} (t) = S(t) \pi_{>N} q^0$ where the Fourier operator is defined by $\ft{S(t) f} (\xi) = e^{4it\vk^2 \xi} \ft{f}(\xi)$, for all $t\in\R$. Since $S(t)$ is an isometry in $L^2(\T)$, the $L^2$-conservation follows. For $q_\text{low}$, note that it satisfies the following Duhamel formulation
	\begin{align*}
		q_\text{low}(t) = S(t) \pi_N q^0 - 4\vk^3 \pi_N \int_{0}^{t} S(t-t') \dx g_{-}(\vk, q_\text{low}) (t') \, dt'
	\end{align*}
	and the local well-posedness in $L^2(\T)$ is a consequence of the boundedness of $S(t)$ in $L^2$ and the Lipschitz property of $g_{-}$ in \eqref{rLip}. Due to the smoothness of the data $\pi_N q^0$, the solutions $q_\text{low}$ are also smooth. Moreover, these solutions are global-in-time due to the conservation of the $L^2$-norm, which follows from the fact that $\pi_N q_\text{low} = q_\text{low}$, and \eqref{Grderiv}.

	To show \eqref{approx-trunc}, note that for $q^0 \in L^2(\T)$, we have that
	\begin{align*}
		&\| \Phi_{\vk}^{\operatorname{mKdV}}(t) q^0 - \Phi_{\vk, N}^{\operatorname{mKdV}}(t) q^0 \|_{L^2} \\
		& \leq \big\| \pi_N \big[ \Phi_{\vk}^{\operatorname{mKdV}}(t) q^0 - \Phi_{\vk, N}^{\operatorname{mKdV}}(t) q^0 \big] \big\|_{L^2} + \big\| \pi_{>N} \big[ \Phi_{\vk}^{\operatorname{mKdV}}(t) q^0 - \Phi_{\vk, N}^{\operatorname{mKdV}}(t) q^0 \big] \big\|_{L^2} \\
		& \leq C |t| \sup_{0\leq t' \leq t} \big\| \pi_N \dx \big[g_{-}(\vk,\Phi_\vk^\text{mKdV} (t')q^0) - g_{-}(\vk, \Phi_{\vk, N}^\text{mKdV}(t') \pi_Nq^0)  \big] \big\|_{L^2} \\
		& \quad + C |t| \sup_{0\leq t' \leq t} \big\| \pi_{>N} \dx g_{-}(\vk,\Phi_\vk^\text{mKdV} (t')q^0) \big\|_{L^2} \\
		& \leq C |t| \sup_{0\leq t' \leq t} \|\Phi_{\vk}^\text{mKdV}(t') q^0 - \Phi_{\vk, N}^\text{mKdV}(t') \pi_N q^0\|_{L^2} \|q^0\|_{L^2}^2 (1+\|q^0\|_{L^2})^6 \\
		& \quad + C |t| N^{-1} \sup_{0\leq t' \leq t} \|q^0\|_{L^2} ( 1+ \|q^0\|^4_{L^2})
	\end{align*}
	where we used \eqref{rLip}, \eqref{rbdL}, and the conservation of $L^2$-norm under the flows $\Phi_{\vk}^\text{mKdV}(t), \Phi^\text{mKdV}_{\vk, N} \pi_N(t)$, and $\Phi^\text{mKdV}_{\vk, N} \pi_{>N}(t)$.
	Also, we have
	\begin{align*}
		&\|\Phi_{\vk}^\text{mKdV}(t') q^0 - \Phi_{\vk, N}^\text{mKdV}(t') \pi_N q^0\|_{L^2}
		\leq \|\Phi_{\vk}^\text{mKdV}(t') q^0 - \Phi_{\vk, N}^\text{mKdV}(t')  q^0\|_{L^2} + \| \pi_{>N}q^0\|_{L^2}
	\end{align*}
	from the conservation of $L^2$-norm.
	Combining the above estimates and taking $T$ small enough such that $CT\|q^0\|_{L^2}^2(1+\|q^0\|_{L^2})^6\leq \frac12$, we get that
	\begin{align*}
		\sup_{0\leq t \leq T} \|\Phi_{\vk}^{\operatorname{mKdV}}(t) q^0 - \Phi_{\vk, N}^{\operatorname{mKdV}}(t) q^0 \|_{L^2} & \leq 2CT \|q^0\|_{L^2} (1+\|q^0\|_{L^2})^7 \big[ N^{-1} + \|\pi_{>N} q^0\|_{L^2}\big],
	\end{align*}
	from which \eqref{approx-trunc} follows.
\end{proof}

It can be proven that the quantity $\mathcal{E}_{s}(\kk,q)$ in  \eqref{Wkk} is conserved under the dynamics of any $H_{\vk}^{\text{mKdV}}$ flow for $\vk\geq 1$, at least for smooth solutions. Therefore, we expect that the measure $\rho_{s,R}$ is invariant under any $H_{\vk}^{\text{mKdV}}$ dynamics. To establish this, we first look at the truncated dynamics $H_{\vk,N}^\text{mKdV}$. However, the quantity $\mathcal{E}_{s}(\kk,\pi_N q_N)$, where $q_N$ denotes the solution of \eqref{t-H-vk-flow}, does not appear to be conserved; see \eqref{Aprojderiv}.
To bypass this difficulty, we proceed as in \cite{TV1,TV2} and instead prove asymptotic conservation for the truncated dynamics. This asymptotic result is sufficient for our goal of establishing the invariance of $\rho_{s,R}$ under the $H_{\vk}^\text{mKdV}$ dynamics.

\begin{proposition}\label{PROP:asym-cons}
	Let $\vk\geq 1$ and $q^0 \in L^2(\T)$. Then, there exists $0<\theta\ll1$ such that
	\begin{align*}
		\sup_{t\in\R} \bigg| \frac{d}{dt} \mathcal{E}_{s} \big(\Phi^\textup{mKdV}_{\vk, N}(t) \pi_N q^0 \big) \bigg|  \les N^{-\theta} \|q^0\|_{L^2}^6 ( 1+ \|q^0\|_{L^2}^8),
	\end{align*}
	for any $N\in \NB$.
\end{proposition}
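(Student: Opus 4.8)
The plan is to compute $\frac{d}{dt} \mathcal{E}_s(\Phi^{\textup{mKdV}}_{\vk,N}(t)\pi_N q^0)$ directly and show that it fails to vanish only because of a projection mismatch, which is small in $N$. Writing $q_N(t) = \Phi^{\textup{mKdV}}_{\vk,N}(t)\pi_N q^0$ (so $q_N = \pi_N q_N$ by Lemma~\ref{LM:trunc-flow-prop}), I would first recall that $\mathcal{E}_s(q) = \int_1^\infty \kk^{2s}\mathcal{A}(\kk,q)\,d\kk$ with $\mathcal{A}(\kk,q) = A(\kk,q) - \tfrac12 A(\tfrac\kk2,q)$, so by the chain rule
\begin{align*}
\frac{d}{dt}\mathcal{E}_s(q_N) = \int_1^\infty \kk^{2s} \Big\langle \tfrac{\dl \mathcal{A}(\kk,q_N)}{\dl q}, \dt q_N\Big\rangle \, d\kk = \int_1^\infty \kk^{2s} \Big\langle g_{-}(\kk,q_N) - \tfrac12 g_{-}(\tfrac\kk2,q_N), \dt q_N\Big\rangle\,d\kk,
\end{align*}
using \eqref{derivA}. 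Substituting the equation \eqref{t-H-vk-flow}, namely $\dt q_N = 4\vk^2\dx q_N - 4\vk^3\pi_N\dx g_{-}(\vk,q_N)$, the key structural input is that if the projection $\pi_N$ were absent, the pairing would be exactly the Poisson bracket $\{\mathcal{A}(\kk,\cdot), H_\vk^{\textup{mKdV}}\}$, which vanishes identically by the commutativity of the flows encoded in the conservation of $A(\kk,q)$ under the full $H_\vk^{\textup{mKdV}}$ dynamics. Hence the entire time derivative reduces to the \emph{commutator defect} produced by $\pi_N$.

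Concretely, I would write $\pi_N\dx g_-(\vk,q_N) = \dx g_-(\vk,q_N) - \pi_{>N}\dx g_-(\vk,q_N)$ and split the pairing accordingly. The first piece assembles into the exact conservation law and vanishes; the remaining error term is
\begin{align*}
\mathcal{R}_N := 4\vk^3 \int_1^\infty \kk^{2s}\Big\langle g_{-}(\kk,q_N) - \tfrac12 g_{-}(\tfrac\kk2,q_N),\, \pi_{>N}\dx g_{-}(\vk,q_N)\Big\rangle\,d\kk.
\end{align*}
Since $q_N = \pi_N q_N$, one of the factors is frequency-localized to $|\xi|\leq 2\pi N$, so I would pair the high-frequency projection $\pi_{>N}$ against the smooth, frequency-decaying object $g_-(\kk,q_N)-\tfrac12 g_-(\tfrac\kk2,q_N)$; the overlap only survives near $|\xi|\sim N$, producing a gain of $N^{-\theta}$. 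I would quantify this using the $H^2_\kk$-regularity and bound \eqref{rbdL} on $g_-$, together with the mean-value-type estimate that the difference $g_-(\kk,\cdot) - \tfrac12 g_-(\tfrac\kk2,\cdot)$ inherits extra $\kk$-decay (analogous to the $w(\xi,\kk)$ cancellation in \eqref{wmult}). The $L^2$-norm of $q_N$ is conserved and equals $\|\pi_N q^0\|_{L^2}\leq \|q^0\|_{L^2}$, so all the polynomial factors of $\|q\|_{L^2}$ in Proposition~\ref{PROP:Gamma} convert into the stated $\|q^0\|_{L^2}^6(1+\|q^0\|_{L^2}^8)$ bound.

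The main obstacle I anticipate is \textbf{securing convergence of the $\kk$-integral uniformly while extracting the $N^{-\theta}$ gain}. The naive bounds on $g_-(\kk,q_N)$ from \eqref{rbdL} grow like $\kk$, so the integrand a priori behaves like $\kk^{2s+1}$, which diverges; the decay must come entirely from pairing against the \emph{difference} structure in $g_-(\kk,\cdot)-\tfrac12 g_-(\tfrac\kk2,\cdot)$ combined with the high-frequency cutoff $\pi_{>N}$ acting on a function whose low-frequency content is controlled. The delicate point is to interpolate these two sources of smallness: the high-frequency restriction gives $N^{-\theta}$ but costs some regularity, while the $\kk$-decay of the difference must supply enough room to both absorb that regularity loss and render $\int_1^\infty \kk^{2s-\cdots}\,d\kk$ convergent for the full range $\tfrac12<s<1$. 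I would handle this by expressing the pairing in Fourier variables, using $|\ft{q_N}(\xi)|$ supported in $|\xi|\leq 2\pi N$ against the explicit multiplier decay of $\mathsf{R}_0(2\kk)$ and the factor $w$-type cancellation, choosing $0<\theta\ll1$ at the very end to balance the frequency localization against the integrability threshold $s<1$.
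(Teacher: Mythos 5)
Your structural reduction coincides with the paper's: differentiate under the $\kk$-integral using \eqref{derivA}, observe that the unprojected pairing is the Poisson bracket $\{A(\kk,\cdot),H^{\textup{mKdV}}_{\vk}\}$, which vanishes identically on $L^2(\T)$ (this is the identity $\int_\T [\vk\, g_{-}(\kk)g_{+}(\vk)-\kk\, g_{-}(\vk)g_{+}(\kk)]\,dx=0$ invoked in the paper), and reduce everything to the projection defect carrying $\pi_{>N}$. The genuine gap is in the mechanism you propose for the decay of that defect. The ``$w(\xi,\kk)$-type cancellation'' in $g_{-}(\kk,\cdot)-\tfrac12 g_{-}(\tfrac{\kk}{2},\cdot)$ lives entirely in the part of $g_{-}$ that is \emph{linear} in $q$, namely $4\kk\,\mathsf{R}_0(2\kk)q$ by \eqref{prformula}. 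But once $\pi_{>N}$ is moved onto this factor (as you propose, and as one must), that linear part vanishes identically: $q_N=\pi_N q_N$ and $\mathsf{R}_0$ is a Fourier multiplier, so $\pi_{>N}\mathsf{R}_0(2\kk)q_N=\mathsf{R}_0(2\kk)\pi_{>N}q_N=0$. Hence the cancellation can do no work where it exists, and for the surviving parts $4\kk\,\mathsf{R}_0(2\kk)[q_N\g(\kk)]$, which are at least cubic in $q_N$, there is no cancellation between the $\kk$ and $\tfrac{\kk}{2}$ terms at all (the paper estimates the two terms of $\A$ separately throughout).

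This is not a mere misattribution: with the tools you name, the argument does not close on the required range $\tfrac12<s<1$. If you keep $\pi_{>N}$ only on the second factor, the linear part of the first factor contributes through the multiplier $3\kk\xi^2/((4\kk^2+\xi^2)(\kk^2+\xi^2))$ restricted to $|\xi|\leq 2\pi N$, i.e. $\les \min(\kk^{-1},N^2\kk^{-3})\|q^0\|_{L^2}$, whose $\kk^{2s}$-weighted integral is of size $N^{2s}$, while the second factor supplies only $O_{\vk}(N^{-1})$ (and duality tricks only shuffle $N$-powers between the two factors); the product $N^{2s-1}$ diverges. If you project both factors, the linear parts die, and the crude bounds you cite (Cauchy--Schwarz, \eqref{rbdL}, \eqref{gbd}, \eqref{LinftyH1k}) give at best $\|\pi_{>N}g_{-}(\kk,q_N)\|_{L^2}\les \kk^{-2+2\ta}N^{-2\ta}\|q^0\|_{L^2}^3(1+\cdots)$, and $\int_1^\infty \kk^{2s-2+2\ta}\,d\kk<\infty$ forces $\ta<\tfrac12-s$, impossible for $s>\tfrac12$. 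What actually closes the argument in the paper is finer: write $\pi_{>N}g_{-}(\kk,q_N)=4\kk\,\pi_{>N}\mathsf{R}_0(2\kk)[q_N\g(\kk)]$, split $\g=\g^{[2]}+\g^{[\geq4]}$ via \eqref{G2}, and combine the intrinsic $\kk$-decay of $\g^{[2]}$ and $\g^{[\geq 4]}$ from \eqref{gbd}--\eqref{gbd2} and Lemma~\ref{LEM:interp} (the factors $\|\tfrac{q_N}{2\kk\pm\dd}\|_{L^\infty}\les\kk^{-1/2}\|q_N\|_{L^2}$ are essential) with Bernstein and an $H^{\dl}$--$H^{-\dl}$ duality pairing that distributes the two resolvents' symbol decay between powers of $\kk$ and powers of $N$. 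This yields contributions $\les \kk^{-3+\dl}N^{\eps-\dl}$ and $\les\kk^{-\frac72}N^{-\frac14}$, integrable against $\kk^{2s}$ precisely when $\dl<2(1-s)$, which is what makes the whole range $s<1$ accessible. Your proof would need to be rebuilt around this mechanism rather than around the difference cancellation.
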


\begin{proof}
	Let $u_N(t) = \Phi^\text{mKdV}_{\vk, N}(t)  \pi_N q^0 $, which is supported on frequencies $\{ \xi \in 2\pi\Z: \, |\xi| \leq 2\pi N\}$.
	Recall that
	\begin{align*}
		\mathcal{E}_{s} (u_N) = \int_1^\infty \kk^{2s} \A(\kk, u_N) \, d\kk= \int_1^\infty \kk^{2s} \Big[ A(\kk, u_N) - \tfrac12 A(\tfrac{\kk }{2}, u_N) \Big] \, d\kk,
	\end{align*}
	so we first focus on determining the time derivative of $A(\eta, u_N(t))$, for any fixed $\eta\geq \tfrac{1}{2}$.
	Using \eqref{derivA}, \eqref{Hkk-M-diff}, and \eqref{Grderiv}
	\begin{align*}
		\frac{d}{dt} A(\eta, u_N(t)) & = \Big\{ A(\eta), H_{\vk, N}^{\text{mKdV}} \Big\} \circ (u_N (t)) \\
		& = \int_{\T} g_{-}(\eta , u_N(t)) \dx \big[ 4 \vk^2 u_N(t) - 4\vk^3 \pi_N  g_{-}(\vk, u_N(t)) \big] \, dx \\
		& = 4\vk^2 \bigg[ 2 \eta \int_{\T} u_N(t) g_{+}(\eta, u_N(t)) \, dx - \vk \int_{\T} \pi_N g_{-}(\eta, u_N(t)) \dx \pi_N g_{-}(\vk, u_N(t)) \, dx \bigg] \\
		& = - 4 \vk^3 \int_{\T} \pi_N g_{-}(\eta, u_N(t)) \dx \pi_N g_{-}(\vk, u_N(t)) \, dx.
	\end{align*}
	Focusing on the inner integral, and omitting the dependence on $u_N(t)$ in the following, we have that
	\begin{align*}
		\int_{\T} \pi_N g_{-}(\eta) \dx \pi_N g_{-}(\vk) \, dx
		& = \frac12 \int_{\T}  \big[  \pi_N g_{-}(\eta) \dx \pi_N g_{-}(\vk)  - \pi_N g_{-}(\vk) \dx \pi_N g_{-}(\eta) \big] \, dx \\
		& =  \int_{\T}  \big[ \vk \pi_{>N} g_{-}(\eta)  \pi_{>N} g_{+}(\vk)  - \eta \pi_{>N} g_{-}(\vk)  \pi_{>N} g_{+}(\eta) \big] \, dx,
	\end{align*}
	where we used \eqref{Grderiv} and the fact that $\int [ \vk g_{-}(\eta) g_{+}(\vk) - \eta g_{-}(\vk) g_{+}(\eta) ] \,dx=0$. The latter follows from the differential identities in Proposition~\ref{PROP:Gamma}; for further
	details, see the proof of Proposition~3.1 in \cite{Forlano}.
	Using \eqref{prformula}, \eqref{Grderiv}, and the fact that $\pi_{>N} u_N =0$, we write
	\begin{align*}
		&\int_{\T}  \pi_N g_{-}(\eta) \dx \pi_N g_{-}(\vk) \, dx \\
		& = 8 \vk \eta \int_{\T}  \pi_{>N} \mathsf{R_0}(2\eta) \dx \big[ u_N \g(\eta) + u_N     \big] \pi_{>N} \mathsf{R_0}(2\vk)  \big[ u_N \g(\vk) + u_N  \big] \,dx \\
		& \qquad - 8 \eta \vk \int_{\T} \pi_{>N} \mathsf{R_0}(2\vk) \dx \big[ u_N \g(\vk) + u_N  \big] \pi_{>N} \mathsf{R_0}(2\eta) \big[ u_N \g(\eta)  + u_N \big] \,dx \\
		& = -16 \eta \vk \int_{\T}  \pi_{>N} \mathsf{R_0}(2\eta) \big[ u_N \g(\eta) \big] \pi_{>N} \dx \mathsf{R_0}(2\vk)\big[ u_N \g (\vk) \big] \, dx.
	\end{align*}
	Therefore,
	\begin{align}
		\frac{d}{dt} A(\eta, u_N) & =  4^3 \vk^4 \eta \int_{\T}  \pi_{>N} \mathsf{R_0}(2\eta) \big[ u_N \g(\eta) \big] \pi_{>N} \dx \mathsf{R_0}(2\vk)\big[ u_N \g (\vk) \big] \, dx. \label{Aprojderiv}
	\end{align}

	Now, we establish the decay of $\frac{d}{dt} A(\eta, u_N)$ in $N$ and $\eta$. Consider the decomposition $\frac{1}{4^3 \vk^4}\frac{d}{dt} A(\eta, u_N) = \mathsf{A}^{[2]}(\eta, u_N)  + \mathsf{A}^{[\geq 4]}(\eta, u_N) $, where
	\begin{align*}
		\mathsf{A}^{[2]}(\eta, u_N) &:= \eta \int_{\T}  \pi_{>N} \mathsf{R_0}(2\eta) \big[ u_N \g^{[2]}(\eta) \big] \pi_{>N} \dx \mathsf{R_0}(2\vk)\big[ u_N \g (\vk) \big] \, dx, \\
		\mathsf{A}^{[\geq 4]}(\eta, u_N) &:= \eta \int_{\T}  \pi_{>N} \mathsf{R_0}(2\eta) \big[ u_N \g^{[\geq 4]}(\eta) \big] \pi_{>N} \dx \mathsf{R_0}(2\vk)\big[ u_N \g (\vk) \big] \, dx.
	\end{align*}
	For the first contribution, by Cauchy-Schwarz, \eqref{LinftyH1k}, Bernstein's inequality, and \eqref{gbd}, we have that
	\begin{align*}
		\big| \mathsf{A}^{[2]}(\eta, u_N) \big| & \leq \eta \big\| \pi_{>N} \mathsf{R_0}(2\eta) \big[ u_N \g^{[2]}(\eta) \big] \big\|_{H^\dl} \big\| \pi_{>N} \dx \mathsf{R_0}(2\vk)\big[ u_N \g (\vk) \big] \big\|_{H^{-\dl}} \\
		& \les \eta^{-1+\dl} N^{-1-\dl}\| u_N \g^{[2]} (\eta) \|_{L^2}  \| u_N \g(\vk) \|_{L^2} \\
		& \les \eta^{-1+\dl} N^{-1-\dl}\| u_N \|_{L^\infty} \|\g^{[2]} (\eta) \|_{L^2} \| u_N \|_{L^2} \| \g(\vk) \|_{L^\infty} \\
		& \les \eta^{-2+\dl} N^{-1-\dl}  \|u_N\|_{L^\infty} \|u_N\|_{L^2}^2 \big\| \tfrac{u_N}{2\vk \pm \partial}\big\|_{L^\infty} \|\g(\vk)\|_{H^1_\vk}  \\
		& \les \eta^{-3+\dl} N^{\eps-\dl} \| \pi_N q^0\|^6_{L^2} (1+ \|\pi_N q^0\|_{L^2}^4),
	\end{align*}
	for any $\eps>0$ and any $0<\dl<1$,
	where the last line follows from conservation of the $L^2$-norm. For the second contribution, we have that
	\begin{align*}
		\big| \mathsf{A}^{[\geq 4]}(\eta, u_N) \big| & \leq \eta \|\pi_{>N} \mathsf{R_0}(2\eta) \big[ u_N \g^{[\geq 4]}(\eta) \big] \|_{L^2} \|\pi_{>N} \dx \mathsf{R_0}(2\vk)\big[ u_N \g (\vk) \big] \|_{L^2} \\
		& \les \eta^{-1}N^{-1} \| u_N \g^{[\geq 4]} (\eta) \|_{L^2}  \| u_N \g(\vk) \|_{L^2} \\
		& \les \eta^{-\frac{7}{2}} N^{-\frac14} \|\pi_N q^0\|_{L^2}^{8} ( 1+ \|\pi_N q^0\|_{L^2}^6)
	\end{align*}
	using Bernstein's inequality, \eqref{gbd}, and \eqref{gbd2}.
	Combining the two inequalities, we get
	\begin{align*}
		& \bigg| \int_1^\infty \kk^{2s} \frac{d}{dt} \A(\kk , u_N(t)) \, d\kk \bigg| \\
		& \les \int_1^\infty \kk^{2s} \bigg[  \kk^{-3+\dl} N^{\eps-\dl} \| q^0\|^6_{L^2}(1+\| q^0 \|^4_{L^2}) + \kk^{-\frac72} N^{-\frac14} \|q^0\|_{L^2}^8 (1+\|q^0\|_{L^2}^6)\bigg] \, d\kk \\
		& \les N^{\eps-\dl} \|q^0\|^6_{L^2} ( 1 + \|q^0\|^8_{L^2}) \int_1^\infty \kk^{2s-3+\dl} \, d\kk \\
		& \les N^{-\frac\dl2} \|q^0\|^6_{L^2} ( 1 + \|q^0\|^8_{L^2})
	\end{align*}
	given that $2s-3+\dl<-1 \iff \dl<2(1-s)$ and by picking $\eps= \frac\dl2$.
	Since the estimate above is independent of $t$, we obtain that
	\begin{align*}
		\bigg| \frac{d}{dt}\int_1^\infty \kk^{2s} \A(\kk , u_N(t))  \, d\kk\bigg| & = \bigg| \int_1^\infty \kk^{2s} \frac{d}{dt} \A(\kk, u_N(t))  \, d\kk\bigg|
		\les  N^{-\frac\dl2}  \|q^0\|^6_{L^2} (1+ \|q^0\|_{L^2}^8),
	\end{align*}
	as intended.
\end{proof}

\subsection{Proof of invariance under $H_{\vk}^{\text{mKdV}}$ and $H^{\text{mKdV}}$}

Before proceeding to the proof of invariance of $\rho_{s,R}$ under the $H_{\vk}^{\text{mKdV}}$ dynamics, we establish some preliminary results. Let $\mathcal{B}(L^2)$ denote the set of Borel sets in $L^2(\T)$ and $B(R)$ denote the ball of radius $R^2>0$ in $L^2(\T)$. Also, let $E_N = \operatorname{span}(\cos(\xi x), \sin(\xi x))_{|\xi| \leq 2\pi N}$ and $E_N^\perp = \operatorname{span}(\cos(\xi x), \sin(\xi x))_{|\xi| > 2\pi N}$.

\begin{lemma}\label{LM:inv-finite-base}
	The flow $\Phi_{\vk,N}^{\operatorname{mKdV}}(t)\pi_{>N}$ leaves $E^\perp_N$ and the measure $\mu_{s,N}^\perp$ invariant, while the flow $\Phi_{\vk,N}^{\operatorname{mKdV}}(t)\pi_N$ leaves $E_N$ and the Lebesgue measure $dq_0 dq_1 \cdots dq_N$ invariant.
\end{lemma}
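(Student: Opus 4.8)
The plan is to use the decoupling $q_N = q_{\text{low}} + q_{\text{high}}$ established in Lemma~\ref{LM:trunc-flow-prop} and treat the two regimes separately, since the high-frequency part evolves by a linear propagator while the low-frequency part is a genuine finite-dimensional Hamiltonian system.

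First I would dispatch the high frequencies. By the proof of Lemma~\ref{LM:trunc-flow-prop}, $\Phi_{\vk,N}^{\operatorname{mKdV}}(t)\pi_{>N}$ acts on $E_N^\perp$ as the free propagator $S(t)$ with $\ft{S(t)f}(\xi)=e^{4it\vk^2\xi}\ft f(\xi)$. As $S(t)$ merely multiplies each Fourier coefficient by a unimodular phase, it preserves frequency support, so $E_N^\perp$ is invariant. Viewing the action on each mode $\ft q(\xi)$, $|\xi|>2\pi N$, as a rotation in the $(\Re\ft q(\xi),\Im\ft q(\xi))$-plane, I note that the flat measure $\prod_{|\xi|>2\pi N}d\ft q(\xi)$ has unit Jacobian under rotations and that each $|\ft q(\xi)|^2$ is preserved; since the density of $\mu_{s,N}^\perp$ depends only on the latter quantities, $\mu_{s,N}^\perp$ is left invariant.

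For the low frequencies, Lemma~\ref{LM:trunc-flow-prop} already gives invariance of $E_N$, and on $E_N$ the dynamics is the finite-dimensional Hamiltonian flow $\dt q = \dx\, \nabla H_{\vk,N}^{\operatorname{mKdV}}(q)$ for the Poisson structure \eqref{Poisson}, with $\nabla$ the $L^2(\T)$-gradient; by \eqref{Hkk-M-diff} this gradient lies in $E_N$ when $q\in E_N$, so $V(q):=\dx\,\nabla H_{\vk,N}^{\operatorname{mKdV}}(q)$ is a bona fide vector field on the finite-dimensional space $E_N$. The key step is to show that $V$ is divergence-free: I would write $\operatorname{div}V = \tr(DV) = \tr\big(\dx\circ\operatorname{Hess}H_{\vk,N}^{\operatorname{mKdV}}\big)$ and invoke that $\dx$ is skew-adjoint on $E_N$ (integration by parts on $\T$, no boundary terms), while $\operatorname{Hess}H_{\vk,N}^{\operatorname{mKdV}}$ is self-adjoint — automatic for a Hessian, and also visible from the $x\leftrightarrow y$ symmetry of $\frac{\dl g_-}{\dl q}$ in \eqref{rderivq} combined with \eqref{Gsyms}. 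Since the trace of a skew-adjoint operator composed with a self-adjoint one vanishes, $\operatorname{div}V\equiv 0$. Because the flow is global and smooth (Lemma~\ref{LM:trunc-flow-prop}), Liouville's theorem then yields invariance of the Lebesgue measure on $E_N$; as the divergence-free property is unchanged under linear changes of coordinates, the particular choice $dq_0\,dq_1\cdots dq_N$ is immaterial.

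The genuinely substantive point is the divergence computation for the nonlinear low-frequency flow; everything in the high-frequency regime is immediate from the explicit form of $S(t)$. The main obstacle I anticipate is cleanly tracking the projections $\pi_N$ through the finite-dimensional reduction, so as to confirm that restricting to $E_N$ preserves both the skew-adjointness of $\dx$ and the self-adjointness of the Hessian, and hence that the identity $\tr(\text{skew}\circ\text{self-adjoint})=0$ applies verbatim on $E_N$.
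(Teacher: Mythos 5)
Your proof is correct and follows the same overall strategy as the paper: decouple via Lemma~\ref{LM:trunc-flow-prop}, treat the high frequencies by rotation invariance of the Gaussian, and treat the low frequencies by Liouville's theorem applied to a divergence-free vector field. The difference lies in how divergence-freeness is established. You argue structurally: $\operatorname{div} V = \tr(DV) = \tr\big(\dx \circ \operatorname{Hess} H^{\textup{mKdV}}_{\vk,N}\big) = 0$, since the trace of (skew-adjoint)$\circ$(self-adjoint) vanishes; this needs only that $H^{\textup{mKdV}}_{\vk,N}$ is $C^2$ on $E_N$, which holds by the joint analyticity of the resolvent (hence of $g_-$) in $q$. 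The paper instead works in the explicit Fourier coordinates $(a_m,b_m)$, writes the ODE system, and verifies the pairwise cancellation $\partial F_{a_m}/\partial a_m + \partial F_{b_m}/\partial b_m = 0$ using \eqref{rderivq} and the Green's function symmetries \eqref{Gsyms}; that computation is exactly the concrete verification that the Hessian kernel $\mathsf{G}(x,y)=\dl^2 A/\dl q(x)\dl q(y)$ is symmetric, i.e.\ the self-adjointness you take for granted. Your route is shorter and more robust (it works verbatim for any truncated Hamiltonian flow $\dt q = \dx\,\nabla H(\pi_N q)$ with $H$ twice differentiable), while the paper's route makes explicit \emph{where} the integrable structure enters and does not require formally identifying the second variation of $A(\vk,\cdot)$ with a symmetric operator. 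One small point of rigor on the high-frequency side: $E_N^\perp$ is infinite-dimensional, so the ``flat measure $\prod_{|\xi|>2\pi N} d\ft q(\xi)$'' and the ``density'' of $\mu_{s,N}^\perp$ are only formal; the clean statement (which is what the paper uses) is that $S(t)$ multiplies each $g_\xi$ by a unimodular phase, and complex Gaussians are invariant in law under such rotations, so the pushforward of the random Fourier series defining $\mu_{s,N}^\perp$ has the same law. This is a cosmetic fix, not a gap.
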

\begin{proof}
	Note that $\mu^\perp_{s,N}$ is the probability measure on $E^\perp_N$ induced by the map
	\begin{align*}
		\o\in\O \mapsto  \sum_{|\xi|>2\pi N} \frac{g_\xi(\o)}{\jb{\mathfrak{m}_s(\xi)}} e^{i\xi x}.
	\end{align*}
	Moreover, the random variable obtained by applying $S(t)$ to this Fourier series has the same law as the original random variable. Then, $S(t) =\pi_{>N}\Phi^\text{mKdV}_{\vk, N}(t)$ leaves $\mu^\perp_{s,N}$~invariant.

	For the invariance under $\Phi_{\vk, N}^\text{mKdV}(t)\pi_N$, we consider the finite-dimensional system of ODEs for the Fourier coefficients of the solution and establish that this is a divergence free system. The result then follows by Liouville's theorem. Let $u_N(t) = \Phi^\text{mKdV}_{\vk, N}(t) \pi_N q^0$ and write
	$$u_N(t,x)= a_0(t) + \sum_{1\leq m \leq N} a_m(t) (e_m(x) + e_{-m}(x)) + \sum_{1\leq m \leq N} i b_m(t) (e_{m}(x) - e_{-m}(x)),$$
	where $e_m(x) = e^{2\pi i m x}$ for $m\in\Z$.
	Then, the Fourier coefficients of $q_N$ satisfy $\dt a_0 =0$ and
	\begin{align*}
		\dt a_m &= -8\pi\eta^2 mb_m + 8\pi \vk^3 m \Im \int_\T g_{-}(x;\vk, u_N[a,b]) e_{-m}(x) \, dx =: F_{a_m}(a,b),\\
		\dt b_m &= 8\pi\eta^2 ma_m - 8\pi\vk^3 m \Re \int_\T g_{-}(x;\vk, u_N[a,b]) e_{-m}(x) \, dx =: F_{b_m}(a,b),
	\end{align*}
	where $(a,b) = (a_0,\ldots, a_N, b_1, \ldots, b_N)$.
	In the following, we will drop the dependence of $g_-$ on $\vk$. It suffices to show
	\begin{align*}
		\frac{\partial F_{a_m} }{\partial a_m} + \frac{\partial F_{b_m}}{\partial b_m} =0 , \quad \text{for all} \quad 1 \leq m \leq N.
	\end{align*}
	Fix $1 \leq m \leq N$ and note that
	\begin{align*}
		\frac{\partial F_{a_m}}{\partial a_m} &= 8\pi\vk^3 m \Im \int_\T \frac{\partial }{\partial a_m} g_{-}(x; u_N[a,b]) e_{-m}(x) \, dx, \\
		\frac{\partial F_{b_m}}{\partial b_m} &= -8\pi\vk^3 m \Re \int_\T \frac{\partial }{\partial b_m} g_{-}(x; u_N[a,b]) e_{-m}(x)\, dx.
	\end{align*}
	Using the fact that
	\begin{equation*}
		\begin{aligned}
			\frac{\partial }{\partial a_m} g_{-} (x;q[a,b]) &= \frac{d}{d\theta} g_{-}\big[x; u_N[(a,b) + \theta v_m]  \big] \Big\vert_{\theta=0}  = \int_\T \frac{\dl g_{-}(x)}{\dl u_N[a,b](y)} \Big< \frac{\dl u_N[a,b](y)}{\dl v_m}, v_m \Big> dy , \\
			\frac{\partial }{\partial b_m} g_{-} (x;u_N[a,b]) &= \frac{d}{d\theta} g_{-}\big[x;u_N[(a,b) + \theta v'_m]\big] \Big\vert_{\theta=0}  = \int_\T \frac{\dl g_{-}(x)}{\dl u_N[a,b](y)} \Big< \frac{\dl u_N[a,b](y)}{\dl v'_m}, v'_m \Big> dy,
		\end{aligned}
	\end{equation*}
	where the vectors $v_m, v'_m \in \R^{2N+1}$ have entry equal to 1 in the position corresponding to $a_m$ and $b_m$, respectively, and are 0 otherwise, and $\jb{\cdot, \cdot}$ denotes the inner product in $\R^{2N+1}$.
	For the directional derivative of $u_N$, we have that
	\begin{align}
		\Big< \frac{\dl u_N[a,b](y)}{\dl v_m}, v_m \Big> & = e_m(y) + e_{-m}(y), \quad
		\Big< \frac{\dl u_N[a,b](y)}{\dl v'_m}, v'_m \Big>  = i(e_m(y) - e_{-m}(y)). \label{q-directional-deriv}
	\end{align}

	\noi From \eqref{rderivq} and \eqref{detG}, we have
	\begin{align}
		\frac{\dl g_{-}(x)}{\dl u_N[a,b](y)} & =G_{11}^{2}(x,y)+G_{22}^{2}(x,y)-G_{12}^{2}(x,y)-G_{21}^{2}(x,y)
		=:\mathsf{G}(x,y), \label{r-func-aux}
	\end{align}
	Using \eqref{q-directional-deriv} and \eqref{r-func-aux}, gives
	\begin{align*}
		\frac{\partial }{\partial a_m} g_{-}(x; u_N[a,b]) & = - \int_\T  \mathsf{G}(x,y) [e_m(y) + e_{-m}(y)] \, dy , \\
		\frac{\partial }{\partial b_m} g_{-}(x; u_N[a,b]) & = - \int_\T  \mathsf{G}(x,y) i [e_m(y) - e_{-m}(y)] \, dy,
	\end{align*}
	from which we get
	\begin{align*}
		&\frac{1}{8\pi \vk^3 m}\Big(	\frac{\partial F_{a_m}}{\partial a_m} + \frac{\partial F_{b_m}}{\partial b_m} \Big) \\
		& = \Im \bigg(- \iint_{\T^2 }  \mathsf{G}(x,y)[e_m(y) + e_{-m}(y)]  e_{-m}(x) \, dy \, dx  \bigg) \\
		& \quad - \Re \bigg( - \iint_{\T^2 } \mathsf{G}(x,y) i [e_{m}(y) - e_{-m}(y)]  e_{-m}(x) \, dy \, dx \bigg) \\
		& = - \iint_{\T^2 } \mathsf{G}(x,y) \Im \Big[ [e_{m}(y) + e_{-m}(y)]  e_{-m}(x) + [e_{m}(y) - e_{-m}(y)] e_{-m}(x) \Big] \, dx \, dy \\
		& = i  \iint_{\T^2 }  \mathsf{G}(x,y) [e_m(x-y) - e_{m}(y-x)] \, dx \, dy.
	\end{align*}

	\noi By the symmetries of $G$ in \eqref{Gsyms}, we see that  $\mathsf{G}(y,x)=\mathsf{G}(x,y)$ and hence
	\begin{align*}
		\frac{1}{8\pi\vk^3 m}\Big(	\frac{\partial F_{a_m}}{\partial a_m} + \frac{\partial F_{b_m}}{\partial b_m} \Big) = i \iint_{\T^2 }  [\mathsf{G}(x,y) - \mathsf{G}(y,x) ]  e_m(x-y)\, dx \, dy=0
	\end{align*}
	as intended.
\end{proof}

We can now prove the following change of variables formula.
\begin{lemma}\label{LM:measure-change-var}
	For $\vk \geq 1$, $R>0$, $N\in\NB$, $t\in\R$, and $A\in\mathcal{B}(L^2)$ , we have that
	\begin{align*}
		\rho_{s,R,N}( \Phi_{\vk,N}^{\operatorname{mKdV}}(t)(A) ) = \int_A \ind_{ \{\|q\|^2_{L^2} \leq R\}  } \exp\big( - \mathcal{E}_{s} (\Phi^{\operatorname{mKdV}}_{\vk,N}(t) \pi_N q) \big) \, dq_0 dq_1 \cdots d q_N \otimes d\mu_{s,N}^\perp.
	\end{align*}
\end{lemma}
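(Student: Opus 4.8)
The plan is to establish the identity as an explicit change of variables under the truncated flow $\Phi:=\Phi^{\textup{mKdV}}_{\vk,N}(t)$, exploiting its decoupling into low- and high-frequency blocks. Writing $q=\pi_N q+\pi_{>N}q\in E_N\oplus E_N^\perp$, recall from \eqref{t-H-vk-flow} that $\Phi$ acts as the finite-dimensional Hamiltonian flow $\psi_t:=\pi_N\Phi$ on $E_N$ and as the linear transport isometry $S(t):=\pi_{>N}\Phi$ on $E_N^\perp$, so that $\Phi(q)=\psi_t(\pi_N q)+S(t)\pi_{>N}q$. Since the low modes stay supported in $E_N$, we have $\psi_t(\pi_N q)=\Phi\pi_N q$ and hence $\mathcal{E}_s(\psi_t(\pi_N q))=\mathcal{E}_s(\Phi\pi_N q)$.

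First I would unfold the definition of $\rho_{s,R,N}$: using $\mu_s=\mu_{s,N}\otimes\mu_{s,N}^\perp$ and the fact that the density $F(\pi_N q)$ depends only on $\pi_N q$, the quantity $\rho_{s,R,N}(\Phi(A))$ becomes an integral over $\Phi(A)$ against $d\mu_{s,N}(q_{\textup{low}})\,d\mu_{s,N}^\perp(q_{\textup{high}})$. I would then substitute $q=\Phi(p)$, so that $q_{\textup{low}}=\psi_t(p_{\textup{low}})$, $q_{\textup{high}}=S(t)p_{\textup{high}}$, and $q\in\Phi(A)\Leftrightarrow p\in A$. Three inputs make this substitution clean: (i) on the high block, $S(t)$ is a unitary Fourier multiplier leaving $\mu_{s,N}^\perp$ invariant (Lemma~\ref{LM:inv-finite-base}), so the factor $d\mu_{s,N}^\perp$ is unchanged; (ii) on the low block, $\psi_t$ is volume-preserving by Liouville's theorem --- precisely the divergence-free identity proved in Lemma~\ref{LM:inv-finite-base} --- so the Lebesgue factor $dq_0\cdots dq_N$ passes through with Jacobian $1$; and (iii) the flow preserves $\|\pi_N q\|_{L^2}$ and $\|\pi_{>N}q\|_{L^2}$ separately (Lemma~\ref{LM:trunc-flow-prop}), hence the total mass, so $\ind_{\{\|q\|_{L^2}^2\le R\}}$ transforms to $\ind_{\{\|p\|_{L^2}^2\le R\}}$.

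The last step is purely algebraic: converting $d\mu_{s,N}$ to Lebesgue produces the quadratic weight $\exp(-\tfrac12\|\jb{\m_s(-i\dd)}\pi_N q\|_{L^2}^2)$, and since $\jb{\m_s(\xi)}^2=1+\m_s(\xi)^2$ this equals $\exp(-\tfrac12\|\m_s(-i\dd)\pi_N q\|_{L^2}^2)$ times the mass weight. Combined with the interaction $\exp(-E(\pi_N q))$, the first factor reassembles into $\exp(-\mathcal{E}_s(\pi_N q))$ (the conserved mass being carried along unchanged by $\psi_t$), which after the substitution is evaluated at $\psi_t(p_{\textup{low}})=\Phi\pi_N p$. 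Collecting the surviving factors yields the stated right-hand side.

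I expect the main difficulty to lie entirely in the low-frequency change of variables: one must justify Liouville's theorem for the finite-dimensional ODE system governing the Fourier coefficients, which is exactly the divergence-free computation of Lemma~\ref{LM:inv-finite-base} and rests on the symmetries \eqref{Gsyms} and the degeneracy $\det G=0$ in \eqref{detG}. The high-frequency block, by contrast, is routine, being merely the invariance of a Gaussian under a unitary Fourier multiplier, and the recombination of the Gaussian weight with $E$ into $\mathcal{E}_s$ follows directly from the definitions.
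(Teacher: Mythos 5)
Your proposal is correct and follows essentially the same route as the paper: unfold $\rho_{s,R,N}$ via Fubini into a low-frequency Lebesgue factor and a high-frequency Gaussian factor, then change variables under the decoupled flow, using Lemma~\ref{LM:inv-finite-base} (invariance of $\mu_{s,N}^\perp$ under $S(t)$ and Liouville's theorem for the low-mode Hamiltonian system), the $L^2$-conservation for the mass cutoff, and bijectivity of $\Phi^{\operatorname{mKdV}}_{\vk,N}(t)$ to land on the stated identity with $\mathcal{E}_s$ evaluated along the flow. The only cosmetic difference is that you explicitly track the Gaussian mass weight $\exp(-\tfrac12\|\pi_N q\|_{L^2}^2)$ arising from $\jb{\m_s}^2 = 1+\m_s^2$ (noting it is conserved), a point the paper's proof leaves implicit.
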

\begin{proof}
	Let $ dL_N$ denote the Lebesgue measure $dq_0 \cdots dq_N$.
	Using Fubini's theorem, Lemma~\ref{LM:inv-finite-base}, and the invariance of $\mu_{s,N}^{\perp}$ under $S(t)$, we have that
	\begin{align*}
		&\rho_{s,R,N}( \Phi_{\vk,N}^{\operatorname{mKdV}}(t)(A) ) \\
		& =  \int_{\Phi_{\vk,N}^{\operatorname{mKdV}}(t)(A)} \ind_{\{\|q\|^2_{L^2} \leq R\}}\  e^{-\mathcal{E}_{s}(\pi_N q)} \, dL_N \otimes d\mu_{s, N}^\perp \\
		& = \int_{L^2} \ind_{\Phi_{\vk,N}^{\operatorname{mKdV}}(t)(A)} (q) \ind_{\{\|q\|^2_{L^2} \leq R\}} e^{- \mathcal{E}_{s} (\pi_N q)} \, dL_N \otimes d \mu_{s, N}^\perp \\
		& = \int_{E_N } \bigg[ \int_{E_N^\perp} \ind_{\Phi_{\vk,N}^{\operatorname{mKdV}}(t)(A)} (\pi_N q , S(t) \pi_{>N} q) \ind_{\{\|\pi_N q + S(t) \pi_{>N} q\|^2_{L^2} \leq R\}} e^{- \mathcal{E}_{s}(\pi_N q)} d\mu_{s, N}^\perp \bigg] dL_N \\
		& = \int_{E^\perp_N} \bigg[ \int_{E_N} \ind_{\Phi_{\vk,N}^{\operatorname{mKdV}}(t)(A)} (\Phi_{\vk,N}^{\operatorname{mKdV}}(t)q) \ind_{\{\| \Phi_{\vk,N}^{\operatorname{mKdV}}(t)q \|^2_{L^2} \leq R\}} e^{-\mathcal{E}_{s}(\Phi_{\vk,N}^{\operatorname{mKdV}}(t)\pi_N q)} dL_N\bigg] d\mu_{s, N}^\perp.
	\end{align*}
	The result now follows from the conservation of $L^2$-norm under $\Phi_{\vk,N}^{\operatorname{mKdV}}(t)$ and the fact that $\Phi_{\vk,N}^{\operatorname{mKdV}}(t)$ is a bijection on $L^2(\T)$.
\end{proof}

\begin{lemma}\label{LM:M-inv-limit}
	Let $\vk\geq 1$. Then,
	\begin{align}\label{dt-decays}
		\lim_{N\to\infty} \sup_{\substack{t\in \R \\ A \in \mathcal{B}(L^2)}} \bigg| \frac{d}{dt} \int_{\Phi^{\operatorname{mKdV}}_{\vk,N}(t) (A)} F_N(q) \, d\mu_{s} \bigg| = 0.
	\end{align}
	Moreover, for all $\vk \geq 1$,  $A\in\mathcal{B}(L^2)$, and $t\in\R$, we have that
	\begin{align}\label{t-diff-decays}
		\lim_{N\to\infty} \bigg| \int_{\Phi^{\operatorname{mKdV}}_{\vk,N}(t)(A)} F_N(q) \, d\mu_{s} - \int_A F_N(q) \, d\mu_{s} \bigg| =0.
	\end{align}
\end{lemma}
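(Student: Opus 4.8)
The plan is to reduce both assertions to the asymptotic conservation of $\mathcal{E}_{s}$ from Proposition~\ref{PROP:asym-cons}, combined with the uniform integrability bound on $F_N$ from Proposition~\ref{PROP:FMJL} (equivalently \eqref{FMunifbd}). The whole point is to isolate the entire $t$-dependence of $\int_{\Phi^{\operatorname{mKdV}}_{\vk,N}(t)(A)}F_N\,d\mu_s$ inside a single factor $e^{-\mathcal{E}_s(\Phi^{\operatorname{mKdV}}_{\vk,N}(t)\pi_N w)}$, and then differentiate.

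First I would rerun the change of variables behind Lemma~\ref{LM:measure-change-var}, now applied to $\int_{\Phi^{\operatorname{mKdV}}_{\vk,N}(t)(A)}F_N(q)\,d\mu_s$ instead of to $\rho_{s,R,N}$. Writing $\mu_s = \mu_{s,N}\otimes\mu_{s,N}^\perp$ and substituting $\pi_N q = \Phi^{\operatorname{mKdV}}_{\vk,N}(t)\pi_N w$ on the low modes and $\pi_{>N}q = S(t)\pi_{>N}w$ on the high modes, I would invoke: invariance of $\mu_{s,N}^\perp$ under $S(t)$; invariance of Lebesgue measure $dL_N$ under $\Phi^{\operatorname{mKdV}}_{\vk,N}(t)\pi_N$ (Liouville, Lemma~\ref{LM:inv-finite-base}); and the separate conservation of the $L^2$-norm of the low and high frequency parts (Lemma~\ref{LM:trunc-flow-prop}). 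The Gaussian weight $e^{-\frac12\|\m_s\pi_N q\|^2}$ combines with the density $e^{-E(\pi_N q)}$ into $e^{-\mathcal{E}_s(\Phi^{\operatorname{mKdV}}_{\vk,N}(t)\pi_N w)}$, while the mass weight and the cutoff $\ind_{\{\|q\|^2\leq R\}}$ become $t$-independent by $L^2$-conservation (since $\|\Phi^{\operatorname{mKdV}}_{\vk,N}(t)w\|_{L^2}=\|w\|_{L^2}$). The result is
\begin{align*}
\int_{\Phi^{\operatorname{mKdV}}_{\vk,N}(t)(A)}F_N(q)\,d\mu_s = \int_{L^2}\ind_A(w)\,\ind_{\{\|w\|_{L^2}^2\leq R\}}\,e^{-\mathcal{E}_s(\Phi^{\operatorname{mKdV}}_{\vk,N}(t)\pi_N w)}\,d\lambda_N(w),
\end{align*}
where $d\lambda_N$ is a fixed ($t$-independent) reference measure on $L^2(\T)$.

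Next I would differentiate in $t$, justifying the exchange of $\frac{d}{dt}$ and the integral by dominated convergence on bounded $t$-intervals: the low-frequency flow is a smooth finite-dimensional ODE flow (Lemma~\ref{LM:trunc-flow-prop}), and the pointwise derivative bound below supplies the required domination. This gives
\begin{align*}
\frac{d}{dt}\int_{\Phi^{\operatorname{mKdV}}_{\vk,N}(t)(A)}F_N\,d\mu_s = -\int_{L^2}\ind_A\,\ind_{\{\|w\|_{L^2}^2\leq R\}}\Big(\tfrac{d}{dt}\mathcal{E}_s(\Phi^{\operatorname{mKdV}}_{\vk,N}(t)\pi_N w)\Big)e^{-\mathcal{E}_s(\Phi^{\operatorname{mKdV}}_{\vk,N}(t)\pi_N w)}\,d\lambda_N.
\end{align*}
On the support of the cutoff one has $\|\pi_N w\|_{L^2}^2\leq\|w\|_{L^2}^2\leq R$, so Proposition~\ref{PROP:asym-cons} yields $|\tfrac{d}{dt}\mathcal{E}_s(\Phi^{\operatorname{mKdV}}_{\vk,N}(t)\pi_N w)|\les N^{-\theta}R^3(1+R^4)$, uniformly in $t$ and $w$. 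Bounding $\ind_A\leq 1$ and noting that the remaining integral equals $\int_{\Phi^{\operatorname{mKdV}}_{\vk,N}(t)(L^2)}F_N\,d\mu_s=\int_{L^2}F_N\,d\mu_s\leq C_0$ (since $\Phi^{\operatorname{mKdV}}_{\vk,N}(t)(L^2)=L^2$ and by the uniform bound \eqref{FMunifbd}), I obtain
\begin{align*}
\sup_{t\in\R,\,A\in\mathcal{B}(L^2)}\bigg|\frac{d}{dt}\int_{\Phi^{\operatorname{mKdV}}_{\vk,N}(t)(A)}F_N\,d\mu_s\bigg|\les N^{-\theta}R^3(1+R^4)\,C_0\xrightarrow[N\to\infty]{}0,
\end{align*}
which is \eqref{dt-decays}. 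Finally, \eqref{t-diff-decays} follows for each fixed $t$ from the fundamental theorem of calculus, using $\Phi^{\operatorname{mKdV}}_{\vk,N}(0)=\Id$:
\begin{align*}
\bigg|\int_{\Phi^{\operatorname{mKdV}}_{\vk,N}(t)(A)}F_N\,d\mu_s-\int_A F_N\,d\mu_s\bigg|\leq|t|\sup_{t'\in\R}\bigg|\frac{d}{dt'}\int_{\Phi^{\operatorname{mKdV}}_{\vk,N}(t')(A)}F_N\,d\mu_s\bigg|\les|t|\,N^{-\theta}R^3(1+R^4)C_0\to0.
\end{align*}

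I expect the main obstacle to be the bookkeeping in the change-of-variables step, specifically verifying that the Gaussian weight and the cutoff conspire, via Liouville's theorem and the \emph{separate} $L^2$-conservation of the low and high modes, to leave a $t$-independent reference measure $d\lambda_N$ with all the $t$-dependence sitting inside $\mathcal{E}_s(\Phi^{\operatorname{mKdV}}_{\vk,N}(t)\pi_N w)$; once this identity is in place, everything reduces to Propositions~\ref{PROP:asym-cons} and \ref{PROP:FMJL}. A minor secondary point is the dominated-convergence justification for differentiating under the integral, which is harmless on bounded $t$-intervals because the derivative factor is bounded on the cutoff support and $e^{-\mathcal{E}_s(\Phi^{\operatorname{mKdV}}_{\vk,N}(t)\pi_N w)}$ is controlled by $e^{|t|N^{-\theta}R^3(1+R^4)}$ times the integrable density appearing in $F_N$.
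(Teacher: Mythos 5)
Your proposal is correct and follows essentially the same route as the paper: the change of variables of Lemma~\ref{LM:measure-change-var} to push all $t$-dependence into $\mathcal{E}_{s}(\Phi^{\operatorname{mKdV}}_{\vk,N}(t)\pi_N q)$ against a $t$-independent reference measure, then Proposition~\ref{PROP:asym-cons} on the cutoff support together with the uniform bound of Proposition~\ref{PROP:FMJL} to control the differentiated integral, and finally the fundamental theorem of calculus for \eqref{t-diff-decays}. Your treatment is in fact slightly more explicit than the paper's on two minor points it leaves implicit, namely absorbing the conserved mass weight and normalization into the reference measure, and the dominated-convergence justification for differentiating under the integral.
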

\begin{proof}
	Fix $t\in\R$, $A\in \mathcal{B}(L^2)$, and let $dL_N = dq_0 \cdots dq_N$. From Lemma~\ref{LM:measure-change-var} and Proposition~\ref{PROP:asym-cons}, we have that
	\begin{align*}
		&\bigg|\frac{d}{dt}	\int_{\Phi_{\vk, N}^\text{mKdV}(t)(A)} F_N(q) \, d\mu_{s} \bigg| \\
		&= \bigg|\frac{d}{dt}  \int_A \ind_{ \{\|q\|^2_{L^2} \leq R\}  } e^{-  \mathcal{E}_{s} (\Phi^{\operatorname{mKdV}}_{\vk,N}(t) \pi_N q)} \, dL_N \otimes d\mu_{s,N}^\perp \bigg| \\
		& \leq  \int_A \bigg| \frac{d}{dt}  \mathcal{E}_{s} (\Phi^{\operatorname{mKdV}}_{\vk,N}(t) \pi_N q) \bigg| \ind_{ \{\|q\|^2_{L^2} \leq R\}  } e^{-  \mathcal{E}_{s} (\Phi^{\operatorname{mKdV}}_{\vk,N}(t) \pi_N q)} \, dL_N \otimes d\mu_{s,N}^\perp \\
		& \les \vk^4 R^3(1+R^4) N^{-\theta}
	\end{align*}
	for some small $\theta=\theta(s)>0$,
	where the implicit constants are independent of $t$ and $A$. From the above, we obtain the decay in $N$,  uniformly in $t$ and $A$, and the limit in \eqref{dt-decays} follows. The second result \eqref{t-diff-decays} follows from \eqref{dt-decays} and the Fundamental Theorem of Calculus.
\end{proof}

\begin{lemma}\label{LM:invariant-vk-compact}
	Fix $\vk\geq 1$ and $t\in\R$. Then,  for all compact sets $K\subset L^2(\T)$, we have that
	\begin{align*}
		\int_K F(q) \, d\mu_{s} = \int_{\Phi^{\operatorname{mKdV}}_{\vk}(t)(K)} F(q) \, d\mu_{s}.
	\end{align*}
\end{lemma}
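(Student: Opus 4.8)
The plan is to deduce invariance of the finite Borel measure $\nu := F\,d\mu_s$ (finite since $F\ge 0$ and $\E_{\mu_s}[F]=Z_{s,R}<\infty$ by Proposition~\ref{PROP:Finteq}) under $\Phi_\vk^{\operatorname{mKdV}}(t)$ by combining the three ingredients already assembled: the asymptotic invariance of the truncated densities \eqref{t-diff-decays}, the $L^1$-convergence $F_N\to F$ from \eqref{LpconvFM}, and the uniform convergence of the truncated flows in \eqref{approx-trunc}. A useful first reduction is to prove only the one-sided bound $\nu(K)\le \nu(\Phi_\vk^{\operatorname{mKdV}}(t)(K))$ for every compact $K$ and every $t\in\R$: applying it with $K$ replaced by the compact set $\Phi_\vk^{\operatorname{mKdV}}(t)(K)$ and $t$ by $-t$, and using $\Phi_\vk^{\operatorname{mKdV}}(-t)\circ\Phi_\vk^{\operatorname{mKdV}}(t)=\operatorname{Id}$, immediately yields the reverse inequality, so the symmetry of the problem spares us a delicate lower bound.

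First I would pin down the limit of the truncated integrals. Taking $A=K$ in \eqref{t-diff-decays} gives $\int_{\Phi_{\vk,N}^{\operatorname{mKdV}}(t)(K)}F_N\,d\mu_s-\int_K F_N\,d\mu_s\to 0$, while \eqref{LpconvFM} with $p=1$ gives $\int_K F_N\,d\mu_s\to \int_K F\,d\mu_s=\nu(K)$; together these show $\int_{\Phi_{\vk,N}^{\operatorname{mKdV}}(t)(K)}F_N\,d\mu_s\to\nu(K)$. It then remains to establish $\limsup_N\int_{\Phi_{\vk,N}^{\operatorname{mKdV}}(t)(K)}F_N\,d\mu_s\le \nu(\Phi_\vk^{\operatorname{mKdV}}(t)(K))$, which combined with this limit yields the desired one-sided bound.

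The heart of the matter, and the step I expect to be the main obstacle, is passing to the limit over the \emph{moving} sets $K_t^N:=\Phi_{\vk,N}^{\operatorname{mKdV}}(t)(K)$, since indicator functions are discontinuous and $K_t^N$ depends on $N$. Here compactness is essential. Writing $K_t:=\Phi_\vk^{\operatorname{mKdV}}(t)(K)$ and letting $(K_t)^\eps$ denote its open $\eps$-neighborhood, the uniform convergence \eqref{approx-trunc} guarantees that for $N$ large (depending on $\eps$) one has $K_t^N\subset (K_t)^\eps$: every $y=\Phi_{\vk,N}^{\operatorname{mKdV}}(t)(x)$ with $x\in K$ lies within $\|\Phi_{\vk,N}^{\operatorname{mKdV}}(t)(x)-\Phi_\vk^{\operatorname{mKdV}}(t)(x)\|_{L^2}<\eps$ of the point $\Phi_\vk^{\operatorname{mKdV}}(t)(x)\in K_t$. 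Since $F_N\ge 0$, this gives $\int_{K_t^N}F_N\,d\mu_s\le \int_{(K_t)^\eps}F_N\,d\mu_s$; letting $N\to\infty$ with $\eps$ fixed and using $F_N\to F$ in $L^1(d\mu_s)$ bounds the $\limsup$ by $\nu((K_t)^\eps)$. Finally, because $K_t$ is compact, hence closed, and $\nu$ is finite, continuity from above along $(K_t)^\eps\downarrow K_t$ as $\eps\downarrow 0$ gives $\nu((K_t)^\eps)\to\nu(K_t)$, which is precisely the required upper bound.

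One technical point to address is that \eqref{approx-trunc} is stated only on a time interval $[0,T(R_0)]$ for data in a ball of radius $R_0$. Since $K$ is compact it is bounded, say $K\subset\{\|q\|_{L^2}\le R_0\}$, and both $\Phi_\vk^{\operatorname{mKdV}}(t)$ and $\Phi_{\vk,N}^{\operatorname{mKdV}}(t)$ conserve the $L^2$-norm (Proposition~\ref{P:HK well} and Lemma~\ref{LM:trunc-flow-prop}), so the solutions remain in this ball for all time. Iterating \eqref{approx-trunc} over consecutive intervals of length $T(R_0)$, and using the Lipschitz dependence of the truncated flow on its data over a bounded time to control the accumulated error, upgrades \eqref{approx-trunc} to uniform convergence on $K$ at the fixed time $t$ (negative times being handled by time-reversal), which is all the argument above requires.
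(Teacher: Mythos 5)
Your proof is correct, and its core coincides with the paper's argument: identifying the limit $\int_{\Phi^{\operatorname{mKdV}}_{\vk,N}(t)(K)}F_N\,d\mu_{s}\to\int_K F\,d\mu_{s}$ from \eqref{t-diff-decays} and \eqref{LpconvFM}, trapping the moving sets $\Phi^{\operatorname{mKdV}}_{\vk,N}(t)(K)$ inside an $\eps$-neighborhood of $\Phi^{\operatorname{mKdV}}_{\vk}(t)(K)$ via \eqref{approx-trunc}, using that this image is compact (hence closed) together with continuity from above of the finite measure $F\,d\mu_{s}$ to send $\eps\to0$, and reducing the equality to the one-sided bound by replacing $K$ with $\Phi^{\operatorname{mKdV}}_{\vk}(t)(K)$ and $t$ with $-t$. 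The one genuine structural difference is the globalization in time. The paper keeps \eqref{approx-trunc} strictly local: it proves the one-sided inequality only for $|t|\leq T(R)$ (working with $K\cap B(R)$, which is legitimate because $F_N$ carries the cutoff $\ind_{\{\|q\|_{L^2}^2\leq R\}}$ and the truncated flow conserves the $L^2$-norm), and then extends the \emph{inequality itself} to all times by iterating it along the group $\Phi^{\operatorname{mKdV}}_{\vk}$, using that it holds for arbitrary compact sets. You instead upgrade the \emph{approximation property} \eqref{approx-trunc} to an arbitrary fixed time by composing over windows of length $T(R_0)$, with $R_0$ chosen from the boundedness of $K$. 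This works, but it hinges on an ingredient the paper never states and your write-up only invokes in passing: a Lipschitz bound for $\Phi^{\operatorname{mKdV}}_{\vk,N}(t)$ on bounded sets of $L^2(\T)$ that is \emph{uniform in $N$}. That bound is indeed available --- the high-frequency part is the isometry $S(t)$, and for the low-frequency part the Duhamel formula, the estimate \eqref{rLip} (with $\pi_N$ and $\dx$ bounded from $H^2_{\vk}$ to $L^2$ uniformly in $N$), Gronwall's inequality, and $L^2$-conservation to keep all iterates in a fixed ball give a constant of the form $e^{C(\vk,R_0)|t|}$ independent of $N$ --- but since it is the load-bearing step of your iteration, it should be proved rather than asserted. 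The trade-off between the two routes: yours needs this extra quantitative control on the truncated flows but then yields the inequality in one shot at any time; the paper's needs nothing from the approximation beyond the interval $[0,T]$, at the price of the self-improvement argument on the measure inequality.
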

\begin{proof}
	Fix $\vk\geq 1$, $t\in\R$, and a compact set $K\subset L^2(\T)$. Let $R>0$ be the parameter associated with the $L^2$-cut-off in $F$.

	We first show that there exists $T=T(R)>0$ such that for $t\in[-T,T]$ we have
	\begin{align}\label{compact-ineq1}
		\int_K F(q) d\mu_{s} \leq \int_{\Phi_\vk^\text{mKdV}(t)(K)} F(q) \, d\mu_{s}.
	\end{align}
	From Lemma~\ref{LM:M-inv-limit} and the $L^1(L^2(\T);d\mu_{s})$ convergence of the densities $F_N$ in \eqref{LpconvFM}, we have
	\begin{align}
		\lim_{N\to\infty} \int_{\Phi^\text{mKdV}_{\vk,N}(t)(K)} F_N(q) \, d\mu_{s}
		& = \int_K F(q) \, d\mu_{s}. \label{lim-M-ineq-aux1}
	\end{align}

	We now prove a local-in-time upper bound for the left-hand side of \eqref{lim-M-ineq-aux1}.
	From \eqref{approx-trunc},
	for fixed $\eps>0$, there exists $N_0 \in \NB$ such that for all $t\in[-T,T]$ and all $N\geq N_0$,
	\begin{equation}\label{approx-sets}
		\Phi_{\vk,N}^\text{mKdV}(t) (K \cap B(R)) \subset \Phi_\vk^\text{mKdV}(t) (K\cap B(R)) + B(\eps).
	\end{equation}
	Note that $\Phi^\text{mKdV}_{\vk, N}(t)(K) \cap B(R) \subseteq \Phi^\text{mKdV}_{\vk, N}(t)(K \cap B(R))$.
	Therefore, by \eqref{approx-sets}, we get that for all $t\in[-T,T]$ and $N\geq N_0$
	\begin{align}
		\int_{\Phi^\text{mKdV}_{\vk, N}(t)(K)} F_N(q) \, d\mu_{s} & = \int_{\Phi^\text{mKdV}_{\vk, N}(t)(K) \cap B(R)} F_N(u) \, d\mu_{s} \nonumber\\
		& \leq \int_{\Phi^\text{mKdV}_{\vk, N}(t)(K \cap B(R) )} F_N(q) \, d\mu_{s}\nonumber \\
		& \leq  \int_{\Phi_{\vk}^\text{mKdV}(t)(K\cap B(R)) + B(\eps)} F_N(q) \, d\mu_{s}. \label{lim-M-ineq-aux2}
	\end{align}
	Since $\Phi_{\vk}^\text{mKdV}(t)$ is a diffeomorphism on $L^2(\T)$ and $K\cap B(R)$ is closed in $L^2(\T)$, the image of $K\cap B(R)$ is also closed and
	\begin{align}
		\bigcap_{\eps>0} \big( \Phi_{\vk}^\text{mKdV}(t)(K\cap B(R)) + B(\eps) \big) = \Phi_\vk^\text{mKdV}(t)(K \cap B(R)). \label{epsintersect}
	\end{align}
	Therefore, by continuity from above of the probability measure $\mu_{s}$, \eqref{epsintersect} and the fact that $F(q)\in~L^1(L^2(\T);d\mu_{s})$, it follows that
	\begin{align}\label{lim-eps-F}
		\lim_{\eps\to 0} \int_{\Phi_{\vk}^\text{mKdV}(t)(K\cap B(R)) + B(\eps)} F(q) \, d\mu_{s} = \int_{\Phi_{\vk}^\text{mKdV}(t)(K\cap B(R))} F(q) \, d\mu_{s}.
	\end{align}
	Consequently, combining \eqref{lim-M-ineq-aux1}, \eqref{lim-M-ineq-aux2}, \eqref{lim-eps-F}, and \eqref{LpconvFM}, by first taking a limit as $N\to \infty$ followed by a limit as $\eps\to0$, we obtain that for $t\in[-T,T]$
	\begin{align*}
		\int_K F(u) \, d\mu_{s} & =
		\lim_{N\to\infty} \int_{\Phi^\text{mKdV}_{\vk,N}(t)(K)} F_N(q) \, d\mu_{s} \\
		& \leq \lim_{\eps\to0}\lim_{N\to\infty} \int_{\Phi_{\vk}^\text{mKdV}(t)(K\cap B(R)) + B(\eps)} [F_N(q) - F(q)]  \, d\mu_{s} \\
		& \qquad + \lim_{\eps \to 0}	\lim_{N\to\infty}  \int_{\Phi_{\vk}^\text{mKdV}(t)(K\cap B(R)) + B(\eps)} F(q)  \, d\mu_{s} \\
		& = \int_{\Phi_{\vk}^{\text{mKdV}}(t) (K\cap B(R))} F(q) \, d\mu_{s} \\
		& \leq \int_{\Phi_{\vk}^{\text{mKdV}}(t) (K)} F(q) \, d\mu_{s}.
	\end{align*}

We now extend \eqref{compact-ineq1} to all times $t\in\R$. Since $\Phi_{\vk}^\text{mKdV}(T)$ is a diffeormophism in $L^2(\T)$ and  \eqref{compact-ineq1} applies to any compact set in $L^2(\T$), for all $t\in[-T,T]$
	\begin{align*}
		\int_{\Phi^\text{mKdV}_{\vk}(T) (K) } F(q) \, d\mu_{s}  & \leq  \int_{\Phi^\text{mKdV}_{\vk}(t) ( \Phi_{\vk}^\text{mKdV}(T)( K)) } F(q) \, d\mu_{s}  =
		\int_{\Phi^\text{mKdV}_{\vk}(t+T) (K) } F(q) \, d\mu_{s},
	\end{align*}
	which combined with \eqref{compact-ineq1} extends the inequality to $t\in[0,2T]$. By repeating this process, we show that the inequality holds for all $t\geq 0$. The same argument applies for $t<0$.

Lastly, let $t\in\R$ and $K$ be a compact set in $L^2(\T)$. Then, $\wt{K}:=\Phi^\text{mKdV}_{\vk}(t)(K)$ is a compact set in $L^2(\T)$. Using \eqref{compact-ineq1} and the group property of $\Phi^\text{mKdV}_\vk(t)$, we obtain
	\begin{align*}
		\int_{\Phi^\text{mKdV}_{\vk}(t)(K)} F(q) \, d \mu_{s} = \int_{\wt{K}} F(q) \, d\mu_{s} \le \int_{\Phi^\text{mKdV}_{\vk}(-t)(\wt{K})} F(q) \, d \mu_{s} = \int_K F(q) \, d\mu_{s}.
	\end{align*}
	The equality follows by combining the above with \eqref{compact-ineq1}.
\end{proof}

We now prove the invariance of $\mu_{s}$ under the $H^\text{mKdV}_\vk$-flow.
\begin{proposition}\label{PROP:Hkinv}
	Fix $\vk\geq 1$ and $t\in\R$. Then, for all $A\in\mathcal{B}(L^2)$ we have that
	\begin{align*}
		\int_A F(q) \, d\mu_{s} = \int_{\Phi^\textup{mKdV}_\vk(t)(A)} F(q) \, d\mu_{s},
	\end{align*}
	that is, the measure $\rho_{s,R}$ is invariant under the $H^\textup{mKdV}_{\vk}$-flow.
\end{proposition}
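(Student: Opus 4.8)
The plan is to bootstrap from the compact-set identity already secured in Lemma~\ref{LM:invariant-vk-compact} to arbitrary Borel sets, exploiting the regularity of the finite measure $d\nu := F(q)\,d\mu_{s}$ together with the fact that $\Phi := \Phi_{\vk}^{\text{mKdV}}(t)$ is a homeomorphism of $L^2(\T)$. First I would record that $\nu$ is a genuine finite positive Borel measure: the density $F = \ind_{\{\|q\|_{L^2}^2 \le R\}} e^{-E(q)}$ is nonnegative and lies in $L^1(L^2(\T); d\mu_{s})$ by Proposition~\ref{PROP:Finteq}, and $\Phi$ is a diffeomorphism of $L^2(\T)$ with inverse $\Phi_{\vk}^{\text{mKdV}}(-t)$ by Proposition~\ref{P:HK well}. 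The goal then reduces to the single identity $\nu(A) = \nu(\Phi(A))$ for every $A \in \mathcal{B}(L^2)$.

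The essential tool is inner regularity. Since $L^2(\T)$ is a separable Hilbert space, hence a Polish space, every finite Borel measure on it is tight, so $\nu$ is inner regular with respect to compact sets:
\begin{align*}
\nu(A) = \sup\{\nu(K): K \subseteq A,\ K \text{ compact}\}, \qquad A \in \mathcal{B}(L^2).
\end{align*}
Given $A$, each compact $K \subseteq A$ satisfies $\nu(K) = \nu(\Phi(K))$ by Lemma~\ref{LM:invariant-vk-compact}, and $\Phi(K)$ is a compact subset of $\Phi(A)$ because $\Phi$ is continuous and injective; hence $\nu(K) = \nu(\Phi(K)) \le \nu(\Phi(A))$. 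Taking the supremum over such $K$ yields $\nu(A) \le \nu(\Phi(A))$.

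For the reverse inequality I would run the identical argument with $\Phi(A)$ in place of $A$, using that $\Phi^{-1}$ is also a homeomorphism. By inner regularity $\nu(\Phi(A)) = \sup\{\nu(K'): K' \subseteq \Phi(A),\ K' \text{ compact}\}$; for each such $K'$, the set $\Phi^{-1}(K')$ is a compact subset of $A$, and applying Lemma~\ref{LM:invariant-vk-compact} to $\Phi^{-1}(K')$ gives $\nu(\Phi^{-1}(K')) = \nu(\Phi(\Phi^{-1}(K'))) = \nu(K')$, so $\nu(K') = \nu(\Phi^{-1}(K')) \le \nu(A)$. Taking the supremum gives $\nu(\Phi(A)) \le \nu(A)$, and combining the two bounds proves the proposition. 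The substantive dynamical and measure-theoretic work has already been carried out in Lemma~\ref{LM:invariant-vk-compact} (via the asymptotic conservation of $\mathcal{E}_{s}$ in Proposition~\ref{PROP:asym-cons} and the approximation of the flow by the truncated dynamics); the only care required at this final step is the bookkeeping between $\Phi$ and $\Phi^{-1}$ and the verification that tightness applies, so I do not anticipate a genuine obstacle here.
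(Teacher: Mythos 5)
Your proposal is correct and follows essentially the same route as the paper: both arguments reduce the Borel case to Lemma~\ref{LM:invariant-vk-compact} via inner regularity of the finite measure $F\,d\mu_{s}$ by compact sets, together with monotonicity and the fact that $\Phi_{\vk}^{\textup{mKdV}}(t)$ is a bijection. The only cosmetic difference is in the reverse inequality, where the paper applies the forward inequality to $\Phi_{\vk}^{\textup{mKdV}}(t)(A)$ with $t\mapsto -t$ and the group property, while you pull compact subsets of $\Phi_{\vk}^{\textup{mKdV}}(t)(A)$ back through the inverse flow; these are interchangeable.
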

\begin{proof}
	Let $A\in \mathcal{B}(L^2)$. Then, there exists a sequence of compact sets $K_n \subset A$ such that
	\begin{align*}
		\lim_{n\to\infty} \int_{K_n} F(q) \, d\mu_{s} = \int_A F(q) \, d\mu_{s}.
	\end{align*}
	From Lemma~\ref{LM:invariant-vk-compact}, we have that for all $t\in\R$,
	\begin{align*}
		\int_{K_n} F(q) \, d\mu_{s} &= \int_{\Phi_{\vk}^\text{mKdV}(t)(K_n)} F(q) \, d\mu_{s} \leq \int_{\Phi_\vk^\text{mKdV}(t)(A)} F(q) \, d\mu_{s}
	\end{align*}
	from the inclusion $\Phi_\vk^\text{mKdV}(t)(K_n) \subset \Phi_\vk^\text{mKdV}(t)(A)$. Therefore,
	\begin{align*}
		\int_A F(q) \, d\mu_{s} \leq \int_{\Phi_\vk^\text{mKdV}(t)(A)} F(q) \, d\mu_{s}.
	\end{align*}
	The reverse inequality follows from the above estimate with $A\mapsto \Phi_\vk^\text{mKdV}(t)(A)$, $t\mapsto -t$, and from the group property of the flow.
\end{proof}

\begin{proof}[Proof of Theorem~\ref{THM:mkdv}(iii)]
	It suffices to prove \eqref{mkdvinv} for a class of test functions which is dense in $L^1(L^2(\T); d\mu_{s})$. We thus may assume that $f$ is continuous and bounded. By the approximation property \eqref{mkdvapprox}, dominated convergence, and Proposition~\ref{PROP:Hkinv}, we have
	\begin{align*}
		\int_{L^2(\T)}f( \Phi^{\text{mKdV}}(t)q) \, d\rho_{s,R}& = \lim_{\vk \to \infty} \int_{L^2(\T)}f( \Phi^{\text{mKdV}}_{\vk}(t)q)\, d\rho_{s,R} \\
		 &= \lim_{\vk \to \infty} \int_{L^2(\T)}f( q)\,d\rho_{s,R}
		  = \int_{L^2(\T)}f( q) \, d\rho_{s,R}.
	\end{align*}
	This completes the proof of \eqref{mkdvinv}.
\end{proof}

We complete this section by proving Theorem~\ref{THM:mkdv}(ii) on the union in $R$ of the support of the weighted measures $\rho_{s,  R}$. We follow the argument in \cite{TV1}.
\begin{proof}[Proof of Theorem~\ref{THM:mkdv}(ii)]
	Note that since $\|q\|_{L^2}<\infty$ a.s. in the support of the measure~$\mu_{s}$, we have that
	$\lim\limits_{R\to\infty} \ind_{\{\|q\|^2_{L^2} \leq R\}} = 1$
	$\mu_{s}$-a.e.
	Thus, by Egorov's theorem, for all $n\in \NB$, $n\geq 2$,
	there exist measurable sets $\O_n\subset L^2(\T)$, and $\wt{R}_n>0$ such that $\mu_{s}(\O_n) = 1-\frac1n$ and $\ind_{\{\|q\|^2_{L^2} \leq R\}}=1$ $\mu_{s}$-a.e. for $R>\wt{R}_n$ and $q\in \O_{n}$. Therefore,
	\begin{align*}
		\bigcup_{R>0} \supp (\ind_{\{\|q\|^2_{L^2} \leq R\}}) \supseteq \bigcup_{n\geq 2} \O_n
	\end{align*}
	and $\lim_{n\to\infty} \mu_{s}(\O_n) = 1$, from which we conclude that $$\mu_{s}\bigg(\bigcup_{R>0} \supp (\ind_{\{\|q\|^2_{L^2} \leq R\}})\bigg)=1.$$

	Lastly, Lemma~\ref{LEM:EJM} guarantees that $E$ is in $L^p(d\mu_{s})$, thus the exponential component of the density $F(q)$ in $\rho_{s,R}$ is $\mu_{s}$-a.s. non-zero.
	Combining these two results, we have that
	\begin{align*}
		\mu_{s} \bigg( \bigcup_{R>0} \supp\Big(\ind_{\{\|q\|_{L^2}^2 \leq R\}} e^{-E(q)}
		\Big)\bigg)  = 1
	\end{align*}
	and thus the union of the supports of $\rho_{s,R}$ with respect to $R>0$ agrees with the support of the base Gaussian measure $\mu_{s}$.
\end{proof}

\subsection{Invariant measures for KdV}\label{SEC:kdv}

In this subsection, we prove Theorem~\ref{THM:kdv}.

\begin{proof}[Proof of Theorem~\ref{THM:kdv}]
Fix $\tfrac{1}{2}<s<1$ and $R>0$.
We first note that by conservation of the $L^2$-norm and the mean, we have
\begin{align}
 \Phi^{\text{mKdV}}(t)(L^2_{0}(\T))=L^2_{0}(\T) \quad \text{for all} \,\, t\in \R, \label{mkdvinvariances2}
\end{align}
and $\int \Phi^{\text{KdV}}_{0}(t)(w^0) \, dx =0$
for all $t\in \R$ and $w^0\in H^{-1}_{0}(\T)$. Thus, $\Phi_{0}^{\text{KdV}}$ is well-defined as a map from $L^2_{0}(\T)$ to $H^{-1}_{0}(\T)$.

We first prove (i). Fix $f\in L^1(H^{-1}_{0}(\T);d\nu^0)$. Then, by the definition of the push-forward measure, \eqref{mkdvinvariances2}, and the invariance of $\rho^0$ under $\Phi^{\text{mKdV}}(t)$ (Theorem~\ref{THM:mkdv}), we have
\begin{align*}
 \int_{H^{-1}_{0}(\T)} f( w^0)d\nu^{0}(w^0)
 & = \int_{L^2_{0}(\T)} f(B(q^0))d\rho^{0}(q^0) \\
 & =  \int_{L^2(\T)} \ind_{L^2_{0}(\T)}(\Phi^{\text{mKdV}}(t)(q^0)) f(B \circ \Phi^{\text{mKdV}}(t)(q^0))d\rho^{0}(q^0) \\
 & =\int_{L^2_0(\T)}  f(B \circ \Phi^{\text{mKdV}}(t)(q^0))d\rho^{0}(q^0) \\
 & =\int_{H^{-1}_{0}(\T)} f(\Phi^{\text{KdV}}_{0}(w^0))d\nu^0(w^0).
\end{align*}

For the proof of (ii), we argue similarly noting that $\tau_{-\al}(H^{-1}_{\al}(\T))=H^{-1}_{0}(\T)$.

Lastly, we show (iii). It suffices to take $\al=0$. Note that $\Phi^{\textup{KdV}}_{0}=\Phi^{\textup{KdV}}_{S} \vert_{H^{-1}_{0}(\T)}$, where $\Phi_{S}^{\text{KdV}}$ is the solution map constructed in \cite{KapTopKdv, KV}. Let $w^0\in H^{-1}_{0}(\T)$, $q^0:=B^{-1}(w^0)\in L^2_{0}(\T)$, and $(q_{n}^{0})$ be a sequence of smooth, mean-zero functions converging to $q^0$ in $L^2(\T)$, such as $q_{n}^{0}=\pi_{n}q^0$.
Define the sequence of smooth functions $w_{n}^{0}:=B(q_{n}^{0})$. By continuity of $B$, $w_{n}^{0}$ converges to $w^0$ in $H^{-1}(\T)$. Let $w_{n}(t)$ denote the corresponding smooth solutions to KdV.
On the one hand, the results in \cite{KapTopKdv, KV}, imply $w_{n}(t)\to \Phi^{\text{KdV}}_{S}(t)(w^0)$ in $H^{-1}(\T)$ for each $t\in \R$. On the other hand, by the continuity of $B$, $\Phi^{\text{KdV}}(t)(w^{0}_{n})\to \Phi^{\text{KdV}}(t)(w^{0})$ in $H^{-1}(\T)$. Now, since each $w_{n}^0$ are smooth, $w_{n}(t)=\Phi^{\text{KdV}}(t)(w_{n}^{0})$ and thus we conclude that $\Phi^{\text{KdV}}(t)(w^0)~=~\Phi_{S}^{\text{KdV}}(w^0)$.
\end{proof}

\section{On the defocusing cubic NLS} \label{SEC:NLS}

In this section, we detail the key modifications required to prove Theorem~\ref{THM:NLS} below, which is an analogue of Theorem~\ref{THM:mkdv} for the defocusing cubic NLS equation \eqref{NLS}. As many of the estimates follow along similar lines to what we have shown previously, we will omit the corresponding proofs here.

For the defocusing cubic NLS, the Lax operator is
\begin{align*}
\mathcal{L}_{\text{NLS}} =
\begin{bmatrix}
-\dd & q \\
-\cj{q} & \dd
\end{bmatrix}.
\end{align*}
For $q\in L^2(\T)$, the Kato-Rellich theorem implies that $\mathcal{L}$ is an unbounded anti-self-adjoint operator with domain $H^{1}(\R)\times H^{1}(\R)$. Thus, the resolvent $R_{\text{NLS}}(\kk):= (\mathcal{L}_{\text{NLS}}+\kk)^{-1}$ exists for all $|\kk|\geq \tfrac{1}{2}$ and it is jointly analytic on $\{ (\kk, q):\, |\kk|\geq \tfrac{1}{2}, \,\, q\in L^2(\T)\}$. The resolvent also admits an integral kernel, which we still denote by $G(x,y;\kk)$, and satisfies \eqref{Gcts} replacing $R(\kk)$ with $R_{\text{NLS}}(\kk)$. The Green's function $G$ is a bounded continuous function on $\{ (x,y)\in \R^2 \,:\, x\neq y\}$, with all components decaying to zero as $|x|\to \infty$ for fixed $y\in \R$. In particular, \eqref{detG} holds.
In view of the symmetry property $\mathcal{L}_{\text{NLS}}(\kk)^{\ast} = -\mathcal{L}_{\text{NLS}}(-\kk)$, the Green's function satisfies
\begin{align}
G(x,y;-\kk) = -G(y,x; \kk). \label{symGNLS}
\end{align}
Following \cite{HGKV}, we define
\begin{align*}
\g(x;\kk,q) : =  \sgn(\kk) [ (G-G_0)_{11}&(x,x;\kk)+ (G-G_0)_{22}(x,x;\kk)],\\
g_{12}(x;\kk,q) : = \sgn(\kk) G_{12}(x,x;\kk) \quad &\text{and} \quad g_{21}(x;\kk,q) : = \sgn(\kk) G_{21}(x,x;\kk).
\end{align*}
Given $q\in L^2(\T)$ and $|\kk|\geq \tfrac{1}{2}$, we have that $\g(\kk)\in H^{1}_{|\kk|}(\T)$ with the estimates in \eqref{gbd}, \eqref{g lip}, and \eqref{g lip4}. Bounds on $g_{12}(\kk)$ and $g_{21}(\kk)$ in $H^{1}_{|\kk|}(\T)$ follow then from the identities
\begin{align*}
g_{12}(\kk) = -\frac{q(\g(\kk)+1)}{2\kk-\dd}, \quad g_{21}(\kk) = \frac{\cj{q}(\g(\kk)+1)}{2\kk+\dd}.
\end{align*}
The replacement for \eqref{gammap1} is
\begin{align*}
1+\sgn(\kk)\text{Re} \, \g(\kk) > 0.
\end{align*}
which follows from $\text{Re}\, \jb{\phi ,\mathcal{L}_{\text{NLS}}\phi}=0$ for any complex-valued $\phi\in H^{1}(\R)\times H^{1}(\R)$.

Following \cite{KMV}, we define
\begin{align*}
A(\kk,q)  = \int_{\T} \rho(x;\kk)dx, \quad \text{where} \quad \rho(x;\kk) = \frac{qg_{21}(\kk)-\cj{q}g_{12}(\kk)}{ 2+\g(\kk)},
\end{align*}
and
\begin{align*}
\al(\kk,q) = 2\text{Re}\, A(\kk, q) =  A(\kk,q) - A(-\kk,q)
\end{align*}
for $\kk\geq 1/2$. The quadratic contributions of $\al(\kk,q)$ are
\begin{align*}
\al^{[2]}(\kk,q) = \sum_{\xi \in 2\pi \Z} \frac{ 2\kk}{\xi^2 +4\kk^2}|\ft q(\xi)|^{2},
\end{align*}
so that
\begin{align*}
\al^{[2]}(\kk,q) - \tfrac{1}{2}\al^{[2]}(\tfrac{\kk}{2},q) = \frac{1}{2\kk} \sum_{\xi \in 2\pi \Z} w(\xi,\kk)|\ft q(\xi)|^{2},
\end{align*}
with $w(\xi,\kk)$ as in \eqref{wmult}.
This leads to considering the Gaussian measures $\mu_s$ as in \eqref{gauss0}
and also the candidate invariant measure
\begin{align}
d\wt{\rho}_{s,R} := \wt{Z}_{s,R}^{-1} \ind_{\{ \| q\|_{L^2}^{2} \leq R\}} \exp \bigg( - \int_{1}^{\infty} \kk^{2s} \,\wt{\mathcal{V}}(\kk,q)d\kk \bigg) d\mu_{s}, \label{rhoNLS}
\end{align}
where $\wt{\mathcal{V}}(\kk,q): = \al^{[\geq 4]}(\kk,q) - \tfrac{1}{2} \al^{[\geq 4]}(\kk,q)$, and $\al^{[\geq 4]}(\kk,q):= \al(\kk,q)-\al^{[2]}(\kk,q)$. Regarding the dynamics of \eqref{NLS}, Bourgain~\cite{BO1} proved global well-posedness on $L^2(\T)$, which suffices for our purposes.
Then, we have the following result:

\begin{theorem}[Invariant measures for defocusing cubic NLS]\label{THM:NLS}
	Let $\tfrac{1}{2}<s<1$ and $R>0$. Then, $\wt{\rho}_{s,R}$ in \eqref{rhoNLS} defines a probability measure on $L^2(\T)$, endowed with the Borel sigma algebra, which satisfies: \\
	\textup{(i)} $\wt{\rho}_{s,R}$ is absolutely continuous with respect to the Gaussian measure $\mu_{s}$ in \eqref{gauss0}.\\
	\textup{(ii)} $\supp\wt{\rho}_{s,R} \subseteq H^{\s}(\T)\setminus H^{s-\frac{1}{2}}(\T)$, for every $\s<s-\tfrac{1}{2}$, and moreover,
	\begin{align*}
		\bigcup_{R>0} \supp \wt{\rho}_{s, R} = \supp \mu_{s}.
	\end{align*}
	\textup{(iii)}  The measure $\wt{\rho}_{s,R}$ is invariant under the defocusing cubic NLS flow \eqref{NLS}. More precisely, for $\Phi^{\textup{NLS}}(t):q^0\mapsto q(t)$ the data-to-solution map for defocusing cubic NLS, we have
	\begin{align}
		\int_{L^2(\T)} f( \Phi^{\textup{NLS}}(t)(q)) d\wt{\rho}_{s,R}(q) =\int_{L^2(\T)} f( q) d\wt{\rho}_{s,R}(q)
		\label{NLSvinv}
	\end{align}
	for all $t\in \R$ and for all $f\in L^1(L^2(\T); d\mu_{s})$.
\end{theorem}

Our construction of the measures  \eqref{rhoNLS} follows the same approach as delineated in Section~\ref{SEC:construct}. The invariance of the measures also follows a similar approach as in Section~\ref{SEC:invariance} by considering the approximating Hamiltonian
\begin{align*}
H_{\vk}^{\text{NLS}} := (2\vk)^{2} M - (2\vk)^{3} \al(\vk),
\end{align*}
and the Hamiltonian dynamics it generates
\begin{align}
i\dt q = \frac{\dl H_{\vk}^{\text{NLS}}}{\dl \cj{q}} = 4\vk^3( g_{12}(\vk)-g_{12}(-\vk))+4\vk^2 q. \label{HkNLS}
\end{align}
See \cite[Section 4.1]{HGKV}. The flow \eqref{HkNLS} is globally well-posed in $L^2(\T)$ for any $\vk \geq 1/2$ due to the Lipschitz estimates for $g_{12}(\vk)$ and conservation of the $L^2(\T)$-norm. Moreover, this flow well-approximates the defocusing NLS flow in $L^2(\T)$ in the same sense as in \eqref{mkdvapprox}.
We then define truncated dynamics $H_{\vk,N}^{\text{NLS}}$ similar to \eqref{Hkk-M-diff}.
Again, these truncated dynamics do not leave $\al(\kk)$ invariant so we need an energy estimate analogous to Proposition~\ref{PROP:asym-cons}, which follows along similar lines.
Lastly, the invariance of the Lebesgue measure on $\R^{2N+1}$ under the truncated approximating $H_{\vk,N}^{\text{NLS}}$ dynamics follows as in Lemma~\ref{LM:inv-finite-base} by a careful computation using that the symmetry properties \eqref{symGNLS} imply
\begin{align*}
\frac{\dl g_{12}(x)}{\dl q}(y) =-G_{11}(x,y)^2, &\quad \frac{\dl g_{12}(x)}{\dl \cj{q}}(y) =G_{12}(x,y)^2, \\
\frac{\dl g_{21}(x)}{\dl q}(y) =-G_{21}(x,y)^2, &\quad \frac{\dl g_{21}(x)}{\dl \cj{q}}(y) =G_{22}(x,y)^2.
\end{align*}
The invariance  of $\wt{\rho}_{s,R}$ under the $H_{\vk}^{\text{NLS}}$ and, eventually, the $H^{\text{NLS}}$ flow follows as in the remaining part of Section~\ref{SEC:invariance}.

\begin{ackno}\rm
The authors would like to thank Tadahiro Oh for suggesting the problem and for his continued support. Both authors were supported by the European Research Council
(grant no. 864138 “SingStochDispDyn”). The authors would like to thank the anonymous referee for their helpful comments which have improved the presentation of this manuscript.
\end{ackno}


\end{document}